\newtheorem{theorem}{Theorem}[section] 
\newtheorem{corollary}[theorem]{Corollary}
\newtheorem{lemma}[theorem]{Lemma}
\newtheorem{proposition}[theorem]{Proposition}
\newtheorem*{proposition*}{Proposition}
\newtheorem*{question*}{Question}
\newtheorem*{theorem*}{Theorem}
\newtheorem*{claim*}{Claim}
\newtheorem*{corollary*}{Corollary}
\theoremstyle{definition}
\theoremstyle{remark}
\newtheorem*{remark*}{Remark}
\newcommand{\R}{\mathbb{R}}
\newcommand{\Z}{\mathbb{Z}}
\newcommand{\C}{\mathbb{C}}
\newcommand{\D}{\mathbb{D}}
\renewcommand{\S}{\mathbb{S}}
\def\eqalign#1{\null\,\vcenter{\openup\jot 
\ialign{\strut\hfil$\displaystyle{##}$&
$\displaystyle{{}##}$\hfil \crcr #1\crcr }}\,}
\def\eqalignno#1{\displ@y \tabskip=\@centering
\halign to\displaywidth{\hfil$\@lign\displaystyle{##}$
\tabskip=0pt &$\@lign\displaystyle{{}##}$

\hfil\tabskip=\@centering
$\llap{$\@lign##$}\tabskip=Opt\crcr #1\crcr}}
\begin{document}

\author[P. Le Calvez]{Patrice Le Calvez}
\address{Institut de Math\'ematiques de Jussieu-Paris Rive Gauche, IMJ-PRG, Sorbonne Universit\'e, Universit\'e Paris-Diderot, CNRS, F-75005, Paris, France \enskip \& \enskip Institut Universitaire de France}
\curraddr{}
\email{patrice.le-calvez@imj-prg.fr}

\title[Handel's fixed point theorem: a Morse theoretical point of view]{Handel's fixed point theorem: a Morse theoretical point of view}

\maketitle

\bigskip
\bigskip

\bigskip
\begin{abstract} 
    Michael Handel has proved in \cite{Ha} a fixed 
point theorem for an orientation preserving homeomorphism of the open 
unit disk, that turned out to be an efficient tool in the study of the dynamics of surface homeomorphisms.  The present article fits into a series of articles by the author \cite{LeC2} and by Juliana Xavier \cite{X1}, \cite{X2}, where proofs were given, related to the classical Brouwer Theory, instead of the Homotopical Brouwer Theory used in the original article. Like in \cite{LeC2}, \cite{X1} and \cite{X2}, we will use ``free brick decompositions'' but will present a more conceptual Morse theoretical argument. It is based on a new preliminary lemma, that gives a nice ``condition at infinity'' for our problem. \end{abstract}

\bigskip
\noindent {\bf Keywords:} Brick 
decomposition, Brouwer theory,  translation arc

\bigskip
\noindent {\bf MSC 2020:} 37B20, 37E30

\maketitle

\bigskip
\bigskip
\hfill {\it \`A la m\'emoire du professeur Yu Jia Rong}

\bigskip
\bigskip

\section{Introduction}

\bigskip
In this article, the unit circle  $\S=\{z\in\C\, \vert\, \vert z\vert =1\}$ of the complex plane is, as usual, oriented counterclockwise and this orientation induces an orientation on every interval of $\S$. If $a$ and $b$ are two distinct points of $\S$, we denote $(a,b)$ the open interval of $\S$ that joins $a$ to $b$ for the induced orientation. We will denote $\D=\{z\in\C\, \vert\, \vert z\vert =1\}$ the unit disk.  Let $f$ be a homeomorphism of $\D$ and $\Gamma\subset \D$ a simple loop that does not contain any fixed point of $f$.  Recall that the {\it index} $i(f,\Gamma)$ is the degree of the map 
$$s\mapsto {f(\Gamma(s))-\Gamma(s)\over \vert f(\Gamma(s))-\Gamma(s)\vert},$$ 
where $s\mapsto\Gamma(s)$ is a parametrization of $\Gamma$ defined on $\S$.
It is well known that if $i(f,\Gamma)\not=0$, the simply connected component of $\D\setminus \Gamma$ contains at least one fixed point of $f$. 

Let us state now the main result of the article:

\bigskip
\begin{theorem}\label{th:principal} Let $f$ be an orientation preserving homeomorphism of 
$\D$ and $p\geq 3$ an integer. We suppose that
 there exists a family $(z_i)_{i\in\Z/p\Z}$
in $\D$ and two families $(\alpha_i)_{i\in\Z/p\Z}$ and $(\omega_i)_{i\in\Z/p\Z}$
in $\S$, such that:

\begin{itemize}

\item  the $2n$ points $\alpha_i$, $\omega_i$, $i\in\Z/p\Z$, are distinct;

\item among these points, the unique one that belongs to $(\omega_{i-1},\omega_i)$
is
$\alpha_{i+1}$;

 \item for every $i\in\ \Z/p\Z$, it holds that
 $$\lim_{k\to-\infty} f^k(z_{i})=\alpha_{i},\enskip \lim_{k\to+\infty}
f^k(z_{i})=\omega_{i};$$

\item $f$ can be extended continuously to
$\D\cup\left(\bigcup_{i\in \Z/p\Z} \{\alpha_i,\omega_i\}\right)$.
\end{itemize}
Then, there exists a simple loop $\Gamma\subset \D$ that does not contain any fixed point of $f$,  such that $i(f,\Gamma)=1$. Consequently, $f$ has at least one fixed point.

\bigskip
\bigskip

\centerline{{\includegraphics[scale=0.5]{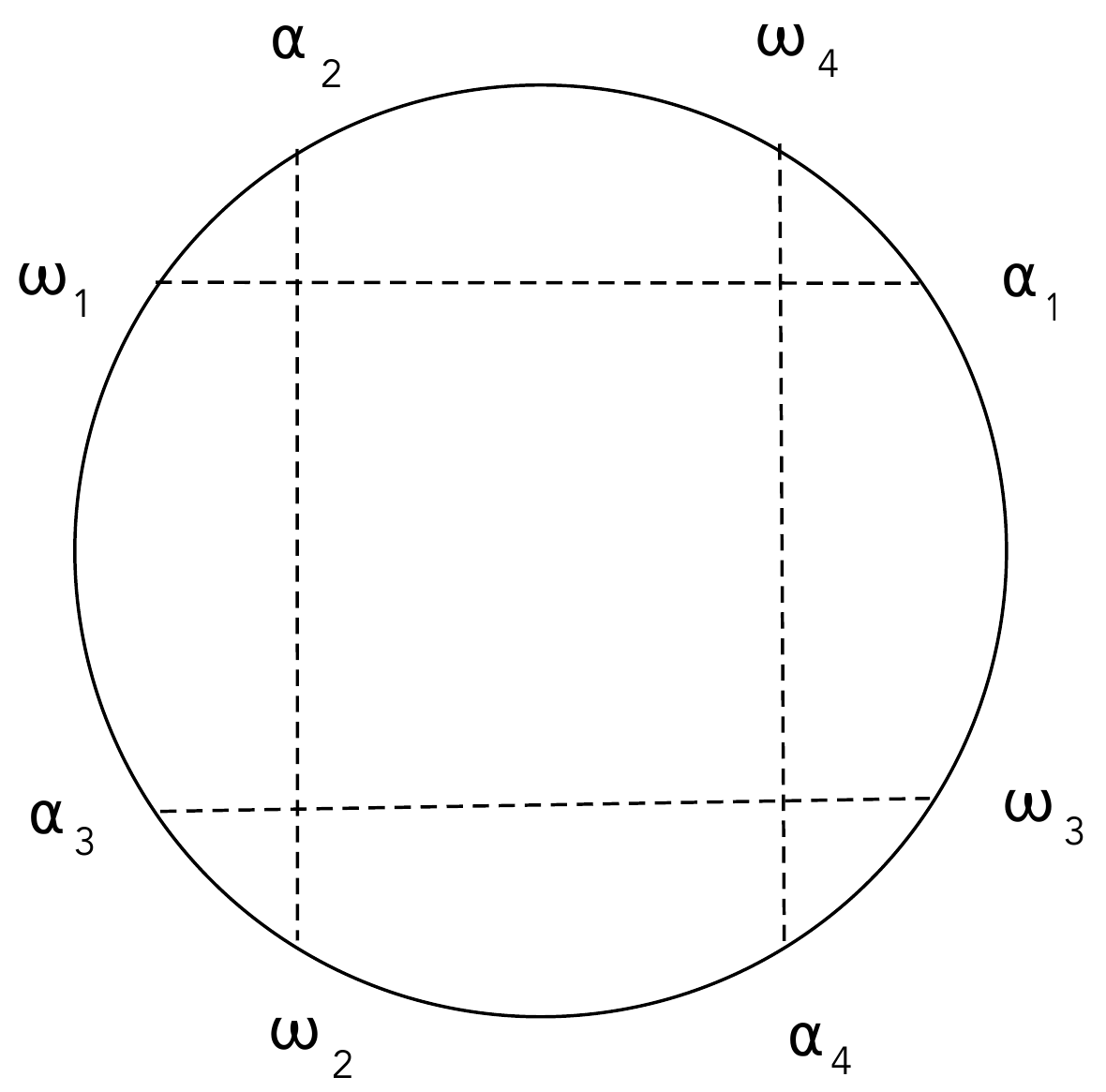}}}

\bigskip

\end{theorem}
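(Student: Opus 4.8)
The plan is to argue by contraposition: suppose that no simple loop $\Gamma\subset\D$ satisfies $i(f,\Gamma)=1$, and derive a contradiction with the existence of the family $(z_i,\alpha_i,\omega_i)_{i\in\Z/p\Z}$. First observe that, since $\alpha_i\neq\omega_i$, the point $z_i$ and its whole orbit lie in $\mathrm{dom}(f):=\D\setminus\mathrm{Fix}(f)$, whereas $f(\alpha_i)=\lim_{k\to-\infty}f^{k+1}(z_i)=\alpha_i$ and similarly $f(\omega_i)=\omega_i$, so the $2p$ marked points are fixed by the boundary extension of $f$. Relying on the theory of free brick decompositions in the form used in \cite{LeC2}, \cite{X1} and \cite{X2}, the hypothesis that there is no loop of index $1$ lets one work with a maximal free brick decomposition $\Sigma$ of $\mathrm{dom}(f)$ whose associated relation — $B\preceq B'$ if there are bricks $B=B_0,\dots,B_m=B'$ with $f(B_j)\cap B_{j+1}\neq\emptyset$ for all $j$ — is a partial order, equivalently has no \emph{nontrivial} cycle (one containing a step $f(B_j)\cap B_{j+1}\neq\emptyset$ with $B_j\neq B_{j+1}$). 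This is the Morse-theoretic skeleton: $\preceq$ behaves like the sublevel-set order of a Lyapunov function, forward orbits being $\preceq$-nondecreasing.

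Since $\Sigma$ is free, an orbit never revisits a brick: a repeat would give either a one-step cycle $f(B)\cap B\neq\emptyset$, barred by freeness, or a nontrivial cycle, barred by the choice of $\Sigma$. Hence the backward orbit of $z_i$ runs through infinitely many strictly $\preceq$-decreasing bricks accumulating on $\alpha_i$, and its forward orbit through infinitely many strictly $\preceq$-increasing bricks accumulating on $\omega_i$; selecting one of each, say $B_i^-$ and $B_i^+$ with $B_i^-\neq B_i^+$, we get $B_i^-\prec B_i^+$ via a chain that is nontrivial. The role of the new preliminary lemma — the ``condition at infinity'' — is to link consecutive orbits. Using the continuity of the boundary extension at the marked points, together with the combinatorial fact that in the cyclic order of $\S$ the points $\omega_i$ and $\alpha_{i+1}$ are consecutive among the $2p$ marked points (the short arc from $\alpha_{i+1}$ to $\omega_i$ meeting none of them), one should be able to choose $\Sigma$, $B_i^-$ and $B_i^+$ so that $B_i^+\preceq B_{i+1}^-$ for every $i\in\Z/p\Z$: concretely, the lemma would exhibit near that boundary arc a brick, or chain of bricks, through which a neighbourhood of $\omega_i$ is $\preceq$-connected to a neighbourhood of $\alpha_{i+1}$.

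Granting this, concatenation around $\Z/p\Z$ produces
\[ B_0^-\ \prec\ B_0^+\ \preceq\ B_1^-\ \prec\ B_1^+\ \preceq\ \cdots\ \preceq\ B_{p-1}^-\ \prec\ B_{p-1}^+\ \preceq\ B_0^-, \]
a nontrivial cycle for $\preceq$ — the permutation $i\mapsto i+1$ of $\Z/p\Z$ being a single cycle, the chain genuinely closes up, and each orbit contributes at least one strict step. This contradicts the choice of $\Sigma$, hence the assumption that no loop has index $1$; the final assertion of the theorem is then the classical fact recalled before its statement, applied to a loop of nonzero index. The main obstacle is unquestionably the ``condition at infinity'' of the previous paragraph, i.e.\ turning the adjacency of $\omega_i$ and $\alpha_{i+1}$ on $\S$ into the inequality $B_i^+\preceq B_{i+1}^-$; it is delicate precisely because $f$ is defined on $\D$ only together with the finitely many boundary points, so that one cannot simply work in a collar of $\partial\D$, and it demands understanding how a maximal free brick decomposition of $\mathrm{dom}(f)$ can be arranged near each marked point — this being exactly the preliminary lemma announced in the abstract. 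A subsidiary point to be handled with care is the extraction of $\Sigma$ from ``no loop of index $1$'', compatibly with the orbits $(z_i)$ and the boundary data.
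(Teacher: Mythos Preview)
Your outline has the right overall shape---reduce to a maximal free brick decomposition on $\D\setminus\mathrm{fix}(f)$ with no closed chain, then exploit the cyclic configuration on $\S$---but the step you single out as the heart of the argument is wrong in substance, not merely delicate. You propose that the preliminary lemma should yield $B_i^+\preceq B_{i+1}^-$, i.e.\ a chain from a brick near $\omega_i$ (a forward limit) to a brick near $\alpha_{i+1}$ (a backward limit). This inequality points the wrong way: bricks near $\omega_i$ sit far \emph{up} in the order (they absorb forward iterates), bricks near $\alpha_{i+1}$ sit far \emph{down} (they absorb backward iterates), and mere adjacency of $\omega_i$ and $\alpha_{i+1}$ on $\S$ produces no dynamical link of this kind. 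No version of the preliminary lemma proves such an inequality; the lemma in the paper (Lemma~\ref{lemma:new}) is purely about a \emph{single} orbit---it builds translation arcs $\gamma_i^k$ from $z_i^k$ to $z_i^{k+1}$ converging to $\alpha_i$ and $\omega_i$, with no statement whatsoever relating distinct indices $i$.

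What the paper does with that lemma is manufacture, for each $i$ separately, bricks $b_i^m$ ($m\in\Z^*$) with $b_i^m<b_i^{m'}$ for $m<m'$, accumulating on $\alpha_i$ and $\omega_i$. The interaction between different $i$ then comes not from a chain but from a \emph{linking} argument (Lemma~\ref{lemma:transverse ends}): a connected set of bricks joining $\eta$ to $\zeta$ must intersect any connected set joining $\eta'$ to $\zeta'$ whenever the pairs $(\eta,\zeta)$ and $(\eta',\zeta')$ are linked on $\S$. This feeds the key dichotomy (Lemma~\ref{lemma:alternative}): for every cut $(R,A)$ of the order, either some component of $R$ contains all the $\alpha_i$, or some component of $A$ contains all the $\omega_i$, and not both. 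The contradiction is obtained by a Morse-type sweep over the \emph{totally ordered} set of cuts of an auxiliary total order extending $\leq$: one locates the single brick $b$ at which the dichotomy flips, and shows that applying $\varphi_+$ to the relevant component forces the forbidden alternative on the other side. So the cyclic hypothesis enters through linking of components across cuts, not through a direct order comparison between bricks of consecutive orbits; your closed-chain scheme, as stated, does not go through.
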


This theorem is usually called {\it Handel's fixed point theorem}. The original version (see \cite{Ha}) is a little bit weaker: $f$ is supposed to extend continuously to $\S$ and the conclusion is that $f$ has at least one fixed point.  It is a highly important result in the study of the dynamics of surface homeomorphisms. For very nice applications, see Franks \cite{Fr2}, \cite{Fr3} or Matsumoto \cite{M}, for instance. The proof of Handel, which is judged very tough, is based on {\it Homotopical Brouwer Theory} which is the study of the mapping class group of the complement of finitely many orbits of a {\it Brouwer homeomorphism},  meaning a fixed point free orientation preserving homeomorphism of $\D$ (see \cite{Ha}, \cite{LeR2} or \cite{Ba}).

A proof of Theorem \ref{th:principal} was given in \cite{LeC2}, based on the classical { \it Brouwer Theory},  that used only topological arguments (and no geometrical argument). The fundamental fact is that the hypothesis of the theorem implies that $f$ is {\it recurrent}, a property known to imply the conclusion. This property will be defined very soon in the text, as all the notions we will introduce in the following lines. The proof given in \cite{LeC2} is a proof by contradiction. Assuming that the map is not recurrent, a preliminary lemma is proved in terms of {\it translation arcs}, giving some description of the dynamics in neighborhoods of the $\alpha_i$ and the $\omega_i$. A {\it maximal free brick decomposition} can be constructed on the complement of the fixed point set, that ``contains'' the dynamical elements described in the lemma. Then it is possible to show that this brick decomposition contains a {\it closed chain}, property that implies that the map is recurrent, which contradicts the hypothesis. A simplification of the proof was obtained by Xavier in \cite{X1} and \cite{X2}. Starting with the same preliminary lemma, a different argument was used to prove the existence of a closed chain in the brick decomposition, whose main interest is that it works for a wider class of brick decompositions that the ones considered in  \cite{LeC2}. It must be noticed that generalizations of Handel's fixed point theorem have been stated by Xavier, replacing the ``elliptic'' situation expressed in Theorem \ref{th:principal}  by  ``hyperbolic'' or ``degenerate'' situations.
 
In the present article, we display a new proof of Theorem \ref{th:principal}  which follows the same outline as the proofs of  \cite{LeC2},  \cite{X1} and  \cite{X2}: first an introductory lemma, like in the cited articles, then a construction of a brick decomposition similar to the construction done by Xavier and then a proof of the existence of a closed chain. The introductory lemma used in the present article, slightly different from the one appearing in \cite{LeC2},  \cite{X1} and  \cite{X2} is much easier to prove. It seems weaker that the old one but curiously turns out to be more efficient for our purpose. The arguments, relative to a brick decomposition, that appear later in the proof are more conceptual that those described in \cite{LeC2},  \cite{X1} and  \cite{X2}: a Morse theoretical reasoning using a condition at infinity given by the introductory lemma. It must be pointed out that one of the nicest applications of Handel's fixed point theorem is the extension by Matsumoto of the Arnold conjecture to Hamiltonian surface homeomorphisms and that the classical version for diffeomorphisms (see \cite{Fl} or  \cite{Si}) is related to Morse theory and its extensions, like Floer homology, as so many results of symplectic topology.

The natural question being at the origin of \cite{LeC2}, is the following: can Theorem  \ref{th:principal} be directly deduced from the {\it foliated version of Brouwer Translation Theorem} ?  The preliminary lemma that is given here permits to answer positively to this question, at least to prove the existence of a fixed point.

Let us explain now the plan of the article. We will recall in the next section the principal results
of Brouwer Theory and will define precisely the many objects introduced above. In a short Section 3 we will introduce definitions related to ordered sets that will be used throughout the text. In Section 4 we will recall classical results about brick decompositions on surfaces. The introductory lemma will be stated and proved in Section 5, and it will be used in Section 6 to construct a nice brick decomposition. The proof of Theorem \ref{th:principal} will be done in Section 7. Some supplementary comments will be done in Section 8. In particular the links with the foliated version of Brouwer Translation Theorem will be explained there. We have tried to make the article the much self-contained as possible, adding precise references for the most technical details. 

\bigskip

\section{Recurrent homeomorphisms and Brouwer Theory} \label{section:recurrent homeomorphisms}

Usually, Brouwer Theory concerns homeomorphisms of $\C$ but can be translated immediately to homeomorphisms of a topological set homeomorphic to $\C$. As we are interested in homeomorphisms of $\D$ in this article, we will state the results in $\D$.

An {\it open disk} of a topological surface $S$ is a subset $V\subset S$ homeomorphic to $\D$. A {\it path} is a continuous map $\gamma: I\to S$, where $I$ is a non trivial real interval. Suppose that $\gamma$ is injective and proper: it is a {\it segment} if $I$ is compact, a {\it line} if $I$ is open, and a {\it half line} if $I=[a,+\infty)$ or $I=(-\infty,a]$. As usual, we will often identify a path and its image.

\subsection{Recurrent homeomorphisms} \label{subsection:recurrent}Let $S$ be a topological surface and $f$ a homeomorphism of $S$. A subset $X\subset S$ is {\it $f$-free} if $f(X)\cap X=\emptyset$ (we will say simply {\it free} if there is no ambiguity). 
Following \cite{LeC2}, we say that $f$ is {\it recurrent} if there exists a family  
of pairwise disjoint free open disks $(V_j)_{j\in\Z/r\Z}$, $r\geq 1$, and a family of positive integers $(k_j)_{i\in \Z/r\Z}$ such that $f^{k_i}(V_j)\cap V_{j+1}\not=\emptyset$, for every
$j\in\Z/r\Z$ (such a family is usually called a {\it free disks closed chain of length $r$}, see \cite{Fr1}). This property is clearly invariant by conjugacy. A simple example of a recurrent homeomorphism of $S$ is given by a homeomorphism that contains a non wandering free open disk, meaning a free open disk $V$ such that there exists $k>0$ such that $f^k(V)\cap V\not=\emptyset$. In particular, if the set of non wandering points is larger than the fixed point set, then $f$ is recurrent.  There exist more surprising examples of recurrent homeomorphisms, like the map $f: z\mapsto z/2$ defined on $\C$. Indeed the segment $\gamma: t\mapsto (8\pi -6\pi t)e^{2i\pi t}$ defined on $[0,1]$ is free but it holds that $f^2(\gamma)\cap \gamma\not=\emptyset$. So, one can find a free open disk $V$ that contains $\gamma$ and one knows that $f^2(V)\cap V\not=\emptyset$.

The fundamental property about recurrent homeomorphisms noticed by Franks in \cite{Fr1} is the fact that a periodic orbit can be obtained by continuous deformation, without adding any fixed point. Indeed, suppose that $f$ is recurrent and write $\mathrm{fix}(f)$ for the fixed point set. One can find an integer $r$ such that there exists a free disks closed chain of length $r$ but no free disks closed chain of length $<r$. Fix such a chain $(V_j)_{j\in\Z/r\Z}$. There exists a sequence of positive integers $(k_j)_{j\in\Z/r\Z}$, uniquely defined, such that:

\begin{itemize}
\item $f^{k_j}(V_j)\cap V_{j+1}\not=\emptyset$;
\item $f^{k}(V_j)\cap V_{j+1}=\emptyset$, if $1<k<k_j$.
\end{itemize}
 Fix $z_j\in V_j\cap f^{-k_j}(V_{j+1})$ and then consider a homeomorphism $h$ of $S$ supported on $\bigcup_{j\in\Z/r\Z} V_j$ such that $h(f^{k_j}(z_i))=z_{j+1}$, for every $j\in \Z/r\Z$. The minimality of $r$ implies that the only disk $V_l$, $l\in \Z/r\Z$, met by $\bigcup_{k\geq 1} f^k(V_j)$ is $V_{j+1}$, and so $h\circ f$ admits a periodic point of period $\sum_{j\in\Z/r\Z} k_j>1$. Moreover it holds that $\mathrm{fix}(h\circ f)=\mathrm{fix}(f)$. In fact, one can find an isotopy $(h_t)_{t\in[0,1]}$ joining $\mathrm{Id}$ to $h$ such that every $h_t$ is supported on $\bigcup_{j\in\Z/r\Z} V_j$  and so it holds that $\mathrm{fix}(h_t\circ f)=\mathrm{fix}(f)$, for every $t\in[0,1]$.

\subsection{Brouwer Theory} If $\lambda$ is an oriented line of $\D$, denote $R(\lambda)$ the connected component of $\D\setminus\lambda$, lying on the right of $\lambda$, and $L(\lambda)$ the component lying on its left. If $f$ is an orientation preserving homeomorphism of $\D$, say that $\lambda$ is a \textit{Brouwer line} of $f$  if
$f(\overline{L(\lambda)})\subset L(\lambda)$
or equivalently if
$f^{-1}(\overline {R(\lambda)})\subset R(\lambda).$

The Brouwer Translation Theorem (see \cite{Br}), asserts that if $f$ is a fixed point free orientation preserving homeomorphism of $\D$ (recall that $f$ is called a Brouwer homeomorphism), then $\D$ can be covered with Brouwer lines: every point $z\in\D$ belongs to such a line. In particular the set $$W=\bigcup_{k\geq 0} f^k(R(\lambda)) \cap \bigcup_{k\leq 0} f^{-k}(L(\lambda))$$ is an open set invariant par $f$ and homeomorphic to $\C$ and the map $f_{\vert W}$ is conjugate to a non trivial translation of $\C$. As a consequence, we know that every point $z$ of $\D$ is wandering, meaning that there is a neighborhood $U$ of $z$ such that the $f^k(U)$, $k\in\Z$, are pairwise disjoint. The proof of Brouwer Translation Theorem uses a preliminary result, telling that if an orientation preserving homeomorphism $f$ of $\D$  has a periodic point of period $>1$, then there exists a simple loop $\Gamma\subset \D\setminus\mathrm{fix}(f)$ such that $i(f,\Gamma)=1$ and consequently $f$ has a fixed point. In fact this preliminary result can be 
expressed in terms of translation arcs (see \cite{Br}, \cite{Fa} or \cite{G}). A segment $\gamma:[0,1]\to\D$ is a {\it translation arc} of $f$ if it joins a point $z\not\in\mathrm{fix}(f)$ to $f(z)$  and if:
\begin {itemize}
\item either $\gamma\cap f(\gamma)= \{z\}$;
\item or  $\gamma\cap f(\gamma)= \{z,f(z)\}$ and $f^2(z)=z$.
\end{itemize}

\bigskip
\bigskip

\centerline{{\includegraphics[scale=0.5]{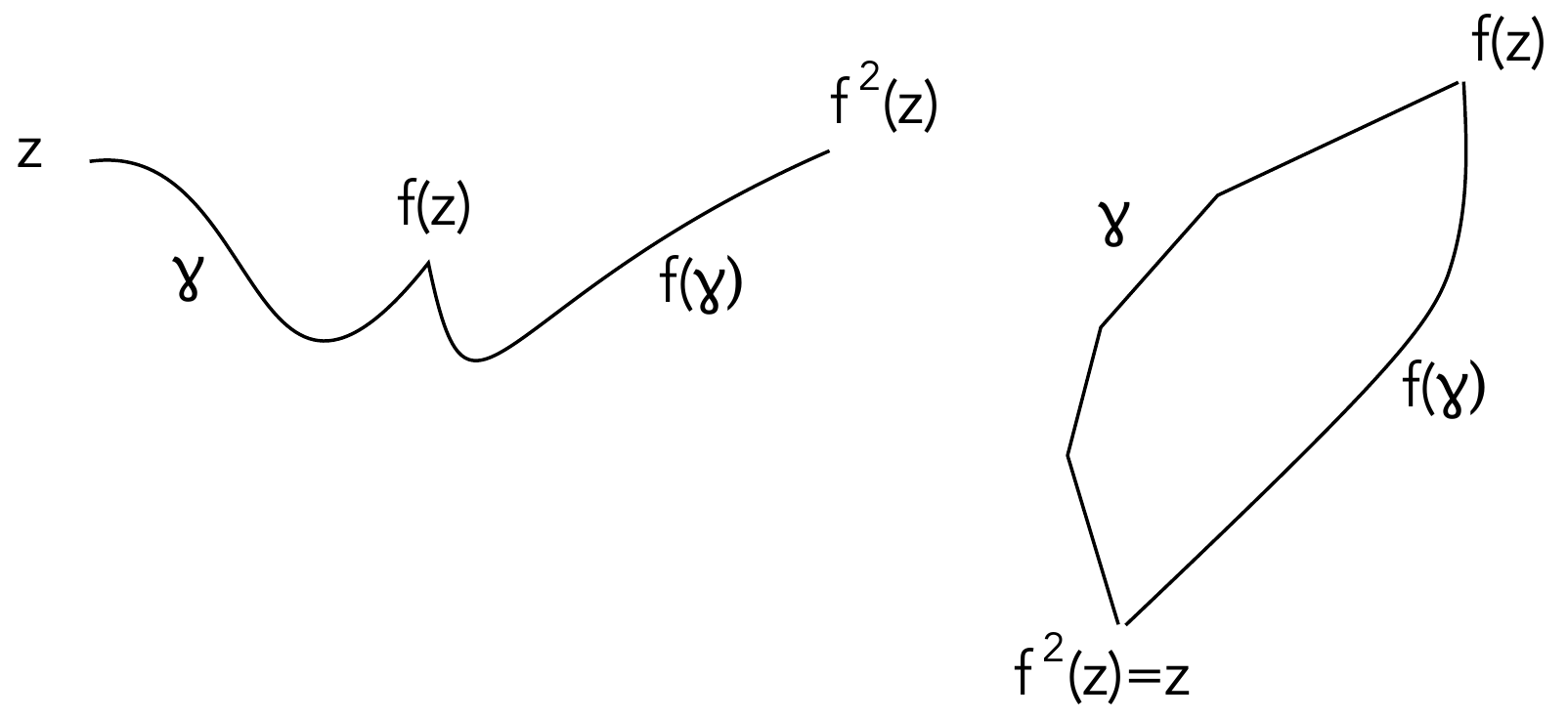}}}

\bigskip

\begin{proposition} \label{prop:brouwer} {Let $ f$ be an orientation preserving homeomorphism of $\D$ and $\gamma$ a translation arc of $f$ joining a point $z$ to $f(z)$. If $\gamma\cap f(\gamma)$ is not reduced to $f(z)$ or if there exists $k>1$ such that  $\gamma\cap f^k(\gamma)\not=\emptyset$, then one can construct a simple loop $\Gamma\subset \D\setminus\mathrm{fix}(f)$ such that $i(f,\Gamma)=1$. }
\end{proposition}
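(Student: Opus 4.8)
The plan is to reduce the statement to the elementary fact that a free topological disk cannot be chained to itself by the dynamics within a neighborhood of a translation arc, and to extract the loop $\Gamma$ with $i(f,\Gamma)=1$ by an explicit surgery on iterates of $\gamma$. First I would set $\gamma:[0,1]\to\D$ with $\gamma(0)=z$, $\gamma(1)=f(z)$, and observe that by hypothesis $\gamma$ is free outside the endpoints: if $\gamma\cap f(\gamma)$ properly contains $\{f(z)\}$ then, since $\gamma$ is a translation arc, the only possibility not yet excluded is that $\gamma\cap f(\gamma)=\{z,f(z)\}$ with $f^2(z)=z$, so $z$ is a periodic point of period $2$; otherwise the hypothesis gives $k>1$ with $\gamma\cap f^k(\gamma)\neq\emptyset$. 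In either case there is a smallest integer $m\geq 2$ such that $\gamma\cap f^m(\gamma)\neq\emptyset$, and the concatenation $\gamma\cup f(\gamma)\cup\cdots\cup f^{m-1}(\gamma)$ is a compact connected set which, at the first point where $f^m(\gamma)$ re-meets $\gamma$, closes up; thickening slightly, one obtains an embedded (or at worst a once-self-touching) loop made of a piece of $\gamma$ together with its first $m-1$ iterates. I would then push this configuration to general position so that it is a genuine simple loop $\Gamma$.

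Next I would compute the index. The key point is that along each iterated sub-arc $f^j(\gamma)$ the displacement vector $f(\cdot)-(\cdot)$ is controlled: on $f^j(\gamma)$ one has a path from $f^j(z)$ to $f^{j+1}(z)$ and the image under $f$ is the next arc $f^{j+1}(\gamma)$, so the displacement never vanishes (freeness) and its direction turns by a definite amount as one traverses the whole cycle. Concretely, walking once around $\Gamma=\bigcup_{j=0}^{m-1} f^j(\gamma)$ (suitably smoothed), the vector field $w\mapsto f(w)-w$ makes exactly one full turn, because the $m$ consecutive arcs each contribute a ``translation-like'' piece and the $m$ corners where $f^{j}(\gamma)$ meets $f^{j+1}(\gamma)$ contribute the rotation that glues these translations into a closed polygon; a careful bookkeeping of the corner angles gives total degree $1$. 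This is the classical translation-arc index computation (as in \cite{Br}, \cite{Fa}, \cite{G}), and I would either reproduce it or cite it; the orientation-preserving hypothesis on $f$ is what fixes the sign to be $+1$ rather than $-1$.

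The main obstacle is the general-position/surgery step: the iterates $f^j(\gamma)$ may intersect each other and themselves in complicated ways, and one must produce from their union an honest simple loop without creating or destroying index and without meeting $\mathrm{fix}(f)$. The standard device is to choose $m$ minimal, which guarantees that $f^j(\gamma)\cap f^l(\gamma)=\emptyset$ whenever $0<|j-l|<m$, so that the only intersections to resolve are the ``first return'' ones between $\gamma$ and $f^m(\gamma)$ (and, if $\gamma$ itself is not embedded after smoothing, self-intersections of a single arc, which can be removed by a small perturbation keeping the arc free since freeness is an open condition on compact sets). Once the union is reduced to a simple closed curve, it automatically avoids $\mathrm{fix}(f)$ because every point of every $f^j(\gamma)$ has nonzero displacement, and then the degree formula of the previous paragraph applies. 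Finally, $i(f,\Gamma)=1\neq 0$ forces a fixed point in the disk bounded by $\Gamma$, which is the stated consequence.
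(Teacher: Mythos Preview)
The paper does not give its own proof of this proposition: it is stated as a classical result of Brouwer theory with references to \cite{Br}, \cite{Fa}, \cite{G} (and implicitly \cite{Bn}), and is used as a black box thereafter. So there is no argument in the paper to compare against; your task amounts to reproducing the classical proof.

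Your outline has the right architecture---take $m\geq 2$ minimal with $\gamma\cap f^m(\gamma)\neq\emptyset$, observe that the concatenation $\gamma\cdot f(\gamma)\cdots f^{m-1}(\gamma)$ is a simple arc, close it up at the first return to obtain $\Gamma$, and compute $i(f,\Gamma)$---but it contains a misstatement and leaves the essential step undone. The disjointness claim ``$f^j(\gamma)\cap f^l(\gamma)=\emptyset$ whenever $0<|j-l|<m$'' is false for $|j-l|=1$: consecutive arcs share exactly their common endpoint $f^{\max(j,l)}(z)$. This is harmless once corrected, and indeed is what makes the concatenation a simple arc rather than a disjoint union.

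The genuine gap is the index computation. Writing ``a careful bookkeeping of the corner angles gives total degree $1$'' and ``I would either reproduce it or cite it'' is not a proof: this step \emph{is} the proposition. The difficulty is that along $f^j(\gamma)$ you do not know the direction of $f(w)-w$ at interior points, only at the endpoints, so a na\"ive corner-angle count does not work. The classical arguments (Brown's is the cleanest) proceed instead by a homotopy of the pair $(\Gamma,f|_\Gamma)$ to a model configuration---essentially a rotation by $2\pi/m$ on a regular $m$-gon---along which the index is constant and equals $1$ at the model; orientation-preservation enters in showing this homotopy exists without the displacement vector vanishing. If you intend to give a self-contained proof, this is where the work lies; if you intend to cite, then your proposal reduces to what the paper already does.
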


It is not difficult to construct a translation arc $\gamma$ passing through a given point $z$ (we will see the construction in Section \ref {section:translation arcs}). If $z$ is periodic, then $\gamma$ satisfies the hypothesis of Proposition \ref{prop:brouwer} and so, there exists a simple loop $\Gamma\subset \D\setminus\mathrm{fix}(f)$ such that $i(f,\Gamma)=1$.

Suppose that the hypothesis of Proposition \ref{prop:brouwer} are satisfied. If $z$ is periodic, then one can find a free open disk $V$  containing $z$, and one knows that there exists $k>0$ such that $f^k(V)\cap V\not=\emptyset$. If $z$ is not periodic,  there exists $k>1$ such that  $\gamma\cap f^k(\gamma)\not=\emptyset$. 
Fix $z'\in\gamma\cap f^{-k}(\gamma)$ and denote $\gamma'$ the sub-segment of $\gamma$ that joins $z'$ to $f^k(z')$. It is different from $\gamma$ because $z$ is not periodic and consequently it is free. So, one can find a free open disk $V$ that contains $\gamma'$ and one knows that $f^k(V)\cap V\not=\emptyset$. In all cases, there exists a non wandering free open disk and so $f$ is recurrent.

Let us recall now the following result of Franks \cite{Fr1}:

\begin{proposition} \label{prop:franks} {If $ f$ is an orientation preserving recurrent homeomorphism of $\D$, then there exists a simple loop $\Gamma\subset \D\setminus\mathrm{fix}(f)$ such that $i(f,\Gamma)=1$. Consequently, $f$ has at least one fixed point. }
\end{proposition}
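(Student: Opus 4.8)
The plan is to deform $f$, without ever moving its fixed point set, into a homeomorphism that manifestly possesses a periodic orbit of period $>1$, to produce the desired index-one loop for that deformed map using Proposition~\ref{prop:brouwer}, and then to transport the index back to $f$ by the homotopy invariance of the degree.

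Concretely, I would start from the surgery recalled in Subsection~\ref{subsection:recurrent}. Since $f$ is recurrent, fix a free disks closed chain $(V_j)_{j\in\Z/r\Z}$ of minimal length $r$, points $z_j\in V_j\cap f^{-k_j}(V_{j+1})$, and an isotopy $(h_t)_{t\in[0,1]}$ from $\mathrm{Id}$ to a homeomorphism $h=h_1$ with every $h_t$ supported on $\bigcup_{j\in\Z/r\Z} V_j$, such that $g:=h\circ f$ has a periodic point $z$ of period $\sum_{j\in\Z/r\Z} k_j>1$ and $\mathrm{fix}(h_t\circ f)=\mathrm{fix}(f)$ for every $t$. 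Set $g_t:=h_t\circ f$; each $g_t$ is an orientation preserving homeomorphism of $\D$, with $g_0=f$ and $g_1=g$. Next, take a translation arc $\gamma$ of $g$ through $z$ (an easy construction; see the discussion following Proposition~\ref{prop:brouwer}). Since $z$ is a periodic point that is not fixed, $\gamma$ satisfies the hypotheses of Proposition~\ref{prop:brouwer} --- either $\gamma\cap g(\gamma)$ is not reduced to $g(z)$, or $z\in\gamma\cap g^m(\gamma)$ for the period $m>1$ --- so Proposition~\ref{prop:brouwer} provides a simple loop $\Gamma\subset\D\setminus\mathrm{fix}(g)=\D\setminus\mathrm{fix}(f)$ with $i(g,\Gamma)=1$.

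Finally, I would transfer the index along the isotopy. Because $\mathrm{fix}(g_t)=\mathrm{fix}(f)$ for every $t$ and $\Gamma$ is disjoint from $\mathrm{fix}(f)$, the map $(t,s)\mapsto(g_t(\Gamma(s))-\Gamma(s))/\vert g_t(\Gamma(s))-\Gamma(s)\vert$ is a well-defined continuous map $[0,1]\times\S\to\S$, hence its degree in the variable $s$ does not depend on $t$; evaluating at $t=1$ and $t=0$ yields $i(f,\Gamma)=i(g,\Gamma)=1$. As recalled in the Introduction, $i(f,\Gamma)=1\ne 0$ then forces $f$ to have a fixed point in the simply connected component of $\D\setminus\Gamma$.

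I do not anticipate a real obstacle here: the two substantive ingredients --- the surgery of Subsection~\ref{subsection:recurrent}, whose validity rests on the minimality of $r$, and Proposition~\ref{prop:brouwer} --- are already available. The one point deserving care is precisely that the fixed point set, and therefore the admissibility of the loop $\Gamma$, is left unchanged all along the isotopy $(g_t)$, which is exactly the hypothesis that makes the homotopy invariance of the index applicable.
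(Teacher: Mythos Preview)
Your proposal is correct and follows essentially the same argument as the paper: deform $f$ along the isotopy $(h_t\circ f)$ supported on the free disks so that the fixed point set is preserved, apply Proposition~\ref{prop:brouwer} to the endpoint $h_1\circ f$ (which has a genuine periodic orbit), and transport the index $1$ back to $f$ by homotopy invariance. The only difference is cosmetic: you spell out the translation-arc step and the explicit homotopy of circle maps, whereas the paper simply invokes Proposition~\ref{prop:brouwer} and the continuity of $t\mapsto i(h_t\circ f,\Gamma)$.
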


Indeed, as seen in Section \ref{subsection:recurrent}, there exists a continuous family $(h_t)_{t\in[0,1]}$ of homeomorphisms such that:
\begin{itemize}
\item $h_t=\mathrm{Id}$;
\item $\mathrm{fix}(h_t\circ f) =\mathrm{fix}(f)$ for every $t\in[0,1]$;
\item $h_1\circ f$ has a periodic point of period $>1$. 
\end{itemize}
By Proposition  \ref{prop:brouwer},  there exists a simple loop $\Gamma\subset \D\setminus\mathrm{fix}(h_1\circ f)$ that does not contain any fixed point of $h_1\circ f$,  such that $i(h_1\circ f,\Gamma)=1$. The integer $i(h_t\circ f,\Gamma)$ is well defined for every $t\in[0,1]$, because $\mathrm{fix}(h_t\circ f) =\mathrm{fix}(f)$ for every $t\in[0,1]$, and depends continuously on $t$. So it  is independent of $t$, and we have $i( f,\Gamma)=1$.

Let us conclude with a characterization of the recurrence due to Guillou and Le Roux (see \cite{LeR1}, page 39). If $\Sigma$ is a submanifold of $\D$ (with or without boundary, of dimension $0$, $1$, or $2$) we will denote $\partial \Sigma$ the boundary of $\Sigma$ and define $\mathrm{int}(\Sigma)=\Sigma\setminus \partial \Sigma$.

\begin{proposition} \label{prop:guillouleroux} {Let $ f$ be an orientation preserving homeomorphism of $\D$. Suppose that there exists a family  $(\Sigma_j)_{j\in\Z/r\Z}$
of free connected submanifolds of $\D$ such that the sets $\mathrm{int}(\Sigma_j)$, $j\in\Z/r\Z$, are pairwise disjoint, and a family of positive integers $(k_j)_{j\in \Z/r\Z}$ such that $f^{k_j}(\Sigma_j)\cap \Sigma_{j+1}\not=\emptyset$, for every
$j\in\Z/r\Z$. Then $f$ is recurrent.}
\end{proposition}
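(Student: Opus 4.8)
The plan is to reduce Proposition~\ref{prop:guillouleroux} to the definition of recurrence, i.e.\ to produce from the free connected submanifolds $(\Sigma_j)_{j\in\Z/r\Z}$ a genuine free disks closed chain. The point is that the $\Sigma_j$ are connected and free, but possibly not open disks (they may be lower-dimensional, or have boundary), and their interiors—not the sets themselves—are required to be pairwise disjoint. So the work is to ``thicken'' each $\Sigma_j$ slightly, inside a small neighborhood of itself, into a free open disk, in such a way that the thickened disks remain pairwise disjoint and the relations $f^{k_j}(\Sigma_j)\cap\Sigma_{j+1}\neq\emptyset$ are preserved as $f^{k_j}(V_j)\cap V_{j+1}\neq\emptyset$.

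First I would fix, for each $j$, a point $w_j\in f^{k_j}(\Sigma_j)\cap\Sigma_{j+1}$, so $w_j\in\Sigma_{j+1}$ and $f^{-k_j}(w_j)\in\Sigma_j$; these finitely many points, together with their images under the finitely many maps $f^{\pm k_j}$, form a finite subset of $\D$. The key topological input is a standard fact: a connected submanifold $\Sigma$ of the surface $\D$ has a neighborhood basis consisting of open sets $V$ that are homeomorphic to $\D$ (one can build such a $V$ as a regular neighborhood, e.g.\ by embedding $\Sigma$ in a slightly larger open disk and taking a thin tubular-type neighborhood; connectedness of $\Sigma$ guarantees one can arrange $V$ to be connected and simply connected, hence an open disk). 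I would invoke this for each $\Sigma_j$ to get a shrinking family of candidate disks $V_j\supset\Sigma_j$.

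Next, a finiteness/compactness argument controls the three constraints simultaneously. Since $f(\Sigma_j)\cap\Sigma_j=\emptyset$ and $\Sigma_j$, $f(\Sigma_j)$ are closed in $\D$ (proper images), freeness is an open condition: for $V_j$ in a small enough neighborhood of $\Sigma_j$ we still have $f(V_j)\cap V_j=\emptyset$. Since the $\mathrm{int}(\Sigma_j)$ are pairwise disjoint, and the only possible overlaps among the $\Sigma_j$ occur along their boundaries (a lower-dimensional set), shrinking the $V_j$ toward $\mathrm{int}(\Sigma_j)$ kills the pairwise intersections—here I would be slightly careful and, if necessary, first pass from $\Sigma_j$ to a connected open subset of $\mathrm{int}(\Sigma_j)$ that still carries a preimage $f^{-k_j}(w_j)$ and still meets $f^{k_{j-1}}(\Sigma_{j-1})$, using that $w_{j-1}$ and $f^{-k_j}(w_j)$ can be joined inside the connected set $\Sigma_j$ and that an arc is contained in a small disk inside $\mathrm{int}(\Sigma_j)$; this disks-around-arcs trick is exactly the mechanism used just above for translation arcs. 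Finally, the relation $f^{k_j}(\Sigma_j)\cap\Sigma_{j+1}\ni w_j$ persists because $w_j$ (resp.\ $f^{-k_j}(w_j)$) remains in any neighborhood of $\Sigma_{j+1}$ (resp.\ $\Sigma_j$) we choose, so $f^{k_j}(V_j)\cap V_{j+1}\neq\emptyset$. The resulting $(V_j)_{j\in\Z/r\Z}$ with the same $(k_j)$ is a free disks closed chain of length $r$, hence $f$ is recurrent.

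The main obstacle I anticipate is the overlap issue: guaranteeing that the thickenings $V_j$ can be made pairwise disjoint even though the $\Sigma_j$ themselves are only \emph{interior}-disjoint and may share boundary points (or a submanifold $\Sigma_j$ of dimension $<2$ may literally lie in the closure of another). This is where the argument must work a little, replacing each $\Sigma_j$ by a well-chosen connected piece of its interior carrying the needed ``entry'' and ``exit'' points and an arc between them, and only then taking a disk neighborhood; the connectedness hypothesis on $\Sigma_j$ is precisely what makes this possible. Everything else—freeness and the chain relations being open/persistent conditions under shrinking—is routine.
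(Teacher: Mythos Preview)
Your fallback idea---replace each $\Sigma_j$ by an arc $\gamma_j\subset\Sigma_j$ joining the ``entry'' point $f^{k_{j-1}}(z_{j-1})$ to the ``exit'' point $z_j:=f^{-k_j}(w_j)$, with $\gamma_j\setminus\{\text{endpoints}\}\subset\mathrm{int}(\Sigma_j)$, and then thicken the (compact, pairwise disjoint, free) arcs to pairwise disjoint free open disks---is exactly the paper's proof. So the essential content is correct.

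What is not correct is your ``key topological input'': it is \emph{false} that every connected submanifold of $\D$ admits a neighborhood basis of open disks. A compact annulus in $\D$ (a perfectly legitimate free connected $2$-submanifold with boundary) has no disk neighborhood at all; connectedness of $\Sigma$ in no way forces a regular neighborhood to be simply connected. So your primary plan---shrink a disk neighborhood of each $\Sigma_j$---cannot get started in general, and the arc trick is not merely a patch for the overlap issue but the actual argument. Also, your ``freeness is an open condition'' step uses that $\Sigma_j$ is closed in $\D$, which is not assumed; once you work with the compact arcs $\gamma_j$ this problem disappears. Finally, note that the arc need not sit inside $\mathrm{int}(\Sigma_j)$ (its endpoints may lie on $\partial\Sigma_j$); what matters is that the open arcs lie in the pairwise disjoint sets $\mathrm{int}(\Sigma_j)$, which already makes the $\gamma_j$ pairwise disjoint, and then compactness gives the disjoint free disk neighborhoods. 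The paper in addition first replaces $r$ by the minimal length of such a chain, paralleling the Franks argument; this is convenient but not essential for establishing recurrence.
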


Here again, recall the idea of the proof. Suppose that $(\Sigma_j)_{j\in\Z/r\Z}$ is a family 
of submanifolds satisfying the hypothesis and that $r$ is minimal, among all such families. In particular, the only manifold $\Sigma_l$, $l\in \Z/r\Z$, met by $\bigcup_{k\geq 1} f^k(\Sigma_j)$ is $\Sigma_{j+1}$.  Let $(k_j)_{j\in \Z/r\Z}$ be the family of integers given by the hypothesis, chosen the smallest as possible. Choose $z_j\in \Sigma_j\cap f^{-k_j}(\Sigma_{j+1})$. One can find a path $\gamma_j\subset \Sigma_j$ joining $f^{k_{j-1}}(z_{j-1})$ to $z_j$ such that $\gamma_j\setminus \{f^{k_{j-1}}(z_{j-1}),z_j\}\subset \mathrm{int}(\Sigma_j)$. This implies that the $\gamma_j$ are pairwise disjoint. Note that they are free and that $f^{k_j}(\gamma_j)\cap \gamma_{j+1}\not=\emptyset$, for every $j\in\Z/r\Z$. One can find a family of pairwise disjoint free open disks $(V_j)_{j\in\Z/r\Z}$ such that $V_j$ contains $\gamma_j$. We have proved that $f$ is recurrent because  $f^{k_j}(V_j)\cap V_{j+1}\not=\emptyset$.

\bigskip

\section{Ordered sets and cuts} \label{section:order}

We will consider different orders in the article, so let us introduce some definitions and notations that will be used later.

Let $(X, \leq)$ be an ordered set. For every $x\in B$, we denote 
$$x_{\leq}=\{y\in X\,\vert\, y\leq x\},$$ and define similarly the sets $x_{\geq}$, $x_{<}$ and $x_{>}$.

A {\it cut} of $\leq$ is a partition $c=(X_c^-, X_c^+)$ of $X$ in two sets $X_c^-$ and $ X_c^+$ (possibly empty), such that
$$x\in X_c^-\Rightarrow \,x_{\leq}\subset X_c^-\,, \enskip x\in X_c^+\Rightarrow \,x_{\geq}\subset X_c^+.$$ 

Denote $\mathcal C$ the set of cuts. The order $\leq$ admits a natural extension on $X\sqcup{\mathcal C}$ by the relations:
\begin{itemize}
\item $c\leq c'$ if  $X_c^-\subset X_{c'}^-$;
\item $x\leq c$ if $ x\in X_c^-$;
\item $x\geq c$ if  $ x\in X_c^+$.
\end{itemize}
Note that the cut $(\emptyset,X)$ is the smallest element of  $X\sqcup{\mathcal C}$ and that $(X,\emptyset)$ is the largest. Note also that if $\leq$ is a total order, then its natural extension is a total order. 

To conclude this section, recall that if $(X, \leq)$ is an ordered set, there exists a total order $\preceq$ on $X$, which is {\it weaker} than $\leq$, meaning that for every $x$, $x'$ in $X$, we have:
$$x\leq x'\Rightarrow x\preceq x'.$$ 
In particular every cut of $\preceq$ is a cut of $\leq$. Indeed, one can put an order $\ll$ on the set of orders $\mathcal O$, writing $\leq_1\,\ll\,\leq_2$ if $\leq_2$ is weaker than $\leq_1$. If $(\leq_{j})_{j\in J}$ is a totally ordered family in $\mathcal O$, then one gets an upper bound $\leq$ of this family, defining $x\leq y$ if there exists $j\in J$ such that $x\leq_{j} y$. By Zorn's lemma one can find an order $\preceq$, maximal for the order $\ll$,  which is weaker than $\leq$. It remains to prove that $\preceq$ is a total order. Argue by contradiction and suppose that there exist $x_0$ and $y_0$ that are non comparable. One gets an order $\preceq'$ strictly weaker than $\preceq$ by defining $x\preceq' y$ if $x\preceq y$ or if $x\preceq x_0$ and $y_0\preceq y$, in contradiction with the maximality of $\preceq$.

\section{Brick decompositions}

In this section, we will recall some facts about brick decompositions (see Sauzet \cite{Sa} or \cite{LeC1}). The results of the last sub-section are the only ones which are new. Nevertheless it seems us useful to give briefly some proofs of known results to make this article the most self-contained as possible. 

 \subsection{Definitions}

A {\it brick decomposition} ${\mathcal D}=(V,E,B)$ on an orientable surface $S$ is defined by a locally finite graph $\Sigma({\mathcal D})$ of $S$, the {\it skeleton} of $\mathcal D$, such that any vertex is locally the extremity of
exactly three edges. Here $V$ is the set of vertices, $E$ the set of edges (meaning the closures in $S$ of the connected components of $\Sigma({\mathcal D})\setminus V$) and $B$ the set of bricks  (meaning the closures in $S$ of the connected components of $S\setminus \Sigma({\mathcal D})$). Note that every edge is the image of a proper 
topological embedding of $[0,1]$,
$[0,+\infty)$,
$\R$ or $\S$ and that every brick is a surface (usually with boundary).

\bigskip
\bigskip

\centerline{{\includegraphics[scale=0.5]{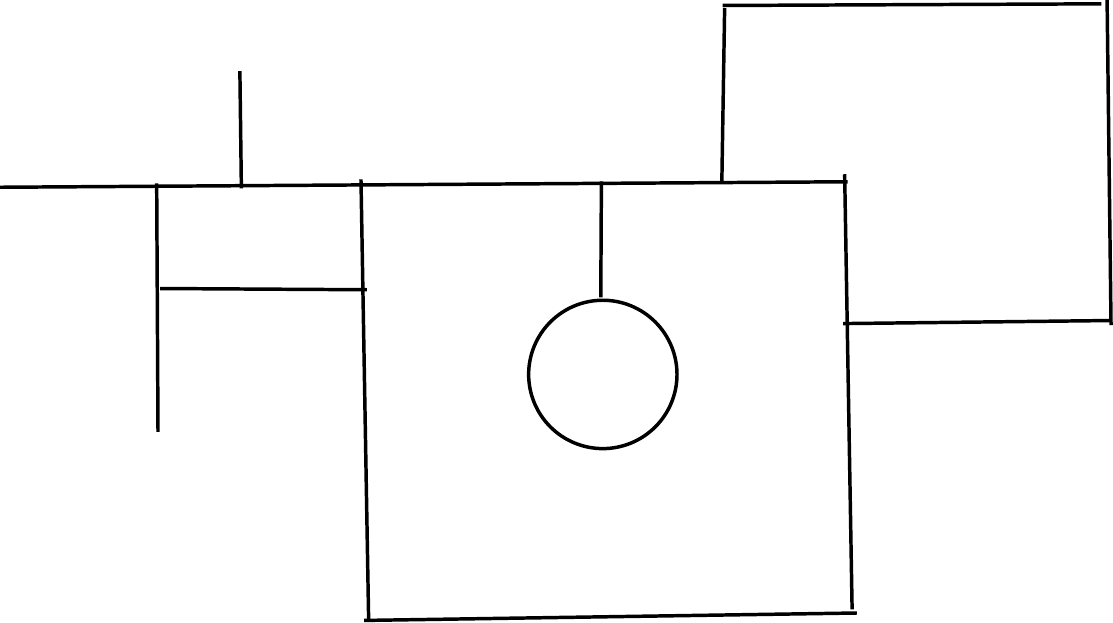}}}

\bigskip
In fact, for every 
$X\in \mathcal {P}(B)$, the union of
bricks $b\in X$ is a sub-surface of $S$ with a boundary contained in $\Sigma({\mathcal D})$.  It is
connected if and only if for every pair of bricks
$b, b'\in X$, there exists a sequence
$(b_j)_{0\leq j\leq n}$ such that:
\begin{itemize}
\item $b_0=b$ and 
$b_n=b'$;
\item $b_j$ and $b_{j+1}$ are {\it adjacent} for
$i\in\{0,\dots,n-1\}$ (meaning contain a common edge). 
\end{itemize}
The maximal connected subsets of a set $X\in \mathcal {P}(B)$ will be called the {\it connected components} of $X$\footnote{Connected components will usually simply be called components in the text.}
For 
simplicity we will denote by the same
letter a set of bricks and the union of these bricks, and will write $X\subset B$ or $X\subset S$ if specification is needed. We will say that two disjoint sets $X$ and $Y$ of $B$ are {\it adjacent} if $\partial X\cap \partial Y\not=\emptyset$ or equivalently if there is an edge $e\in E$ that is contained in a brick of $X$ and in a brick of $Y$.

\medskip
A brick decomposition ${\mathcal D}'=(V',E',B')$ is a {\it sub-decomposition} of ${\mathcal D}$ if $\Sigma({\mathcal D}')\subset\Sigma({\mathcal D})$. In that case $V'$ is a subset of $V$, every edge in $E'$ a union of edges in $E$ and every brick in $B'$ a union of bricks in $B$. Observe that one may 
have $B'=B$ even if ${\mathcal
D}'\not={\mathcal D}$. 
\bigskip

\subsection{The induced maps} Let $f$ be a homeomorphism of $S$ and ${\mathcal D}=(V,E,B)$ a brick decomposition of $S$. If $X\subset B$, then $f(X)\subset S$ is well defined. To define $\varphi_+(X)\subset B$, it is sufficient to set:
$$\begin{aligned}\varphi_+(X)&=\{b\in B\enskip\vert\enskip {\rm there 
\enskip exists}\enskip b'\in
X\enskip{\rm such\enskip that}\enskip b\cap f(b')\not=\emptyset\}\\
&=\{b\in B\enskip\vert\enskip b\cap f(X)\not=\emptyset\}.
\end{aligned}$$
It is easy to see that the map  $\varphi_+: \mathcal P(B)\to {\mathcal P}(B)$ sends connected subsets into connected subsets and satisfies
$$\varphi_+\left(\bigcup_{j\in J} X_j\right)=\bigcup_{j\in J}
\varphi_+(X_j),\enskip\enskip \varphi_+\left(\bigcap_{j\in J}
X_i\right)\subset\bigcap_{j\in J}
\varphi_+(X_j),$$ for every family $(X_j)_{j\in J}$ of subsets of 
$B$. 

\medskip
Similarly, one defines $\varphi_-: {\mathcal P}(B)\to {\mathcal
P}(B)$ writing~:
$$\eqalign{ \varphi_-(X)&=\{b\in B\enskip\vert\enskip {\rm there 
\enskip exists}\enskip b'\in
X\enskip{\rm such\enskip that}\enskip b\cap 
f^{-1}(b')\not=\emptyset\}\cr
&=\{b\in B\enskip\vert\enskip b\cap 
f^{-1}(X)\not=\emptyset\}.\cr}$$
The relations $b'\in\varphi_+^n(\{b\})$ and $b\in\varphi_-^n(\{b'\})$ 
are equivalent. Saying that $b\in\varphi_+^n(\{b\})$, $n\geq 1$, means that there exists a sequence
$(b_j)_{0\leq j\leq n}$ in $B$ such that:
\begin{itemize}
\item $b_0=b_n=b$ ;
\item $f(b_j)\cap b_{j+1}\not=\emptyset$ for every
$i\in\{0,\dots,n-1\}$. 
\end{itemize}
In that case we have a {\it cycle of bricks}.

\subsection{Free brick decompositions, the induced order} Let $f$ be a fixed point free homeomorphism of $S$. Say that ${\mathcal
D}=(V,E,B)$ is a {\it free brick decomposition} of $f$ if every brick $b\in B$ is free, meaning that $f(b)\cap b=\emptyset$. 
Such a decomposition always exists. More precisely, if ${\mathcal
D'}=(V',E',B')$ is a brick decomposition of $S$, there exists a free brick decomposition  ${\mathcal
D}=(V,E,B)$ such that  ${\mathcal
D'}$  is a sub-decomposition of ${\mathcal
D}$. Let ${\mathcal
D}=(V,E,B)$ be a free brick decomposition. An easy application of Zorn's lemma tells us that there exists a partition $(X_j)_{j\in J}$ of $B$ into free connected subsets, such that if $j\not=j'$ then $X_j\cup X_{j'}$ is not free if connected. The
set $\displaystyle\bigcup_{j\in
J}\partial X_j$ is the skeleton of a sub-decomposition ${\mathcal D}'$ of 
$\mathcal D$ whose bricks are the
$X_j$ and such that every
edge is contained in two bricks (and not one) of the decomposition. We obtain in that way a {\it maximal free brick decomposition}: it is free and every strict sub-decomposition of
${\mathcal D}'$ is not free.

Suppose now that $f$ is a	 non recurrent fixed point free homeomorphism of $S$ and that ${\mathcal
D}=(V,E,B)$ is a free brick decomposition of $f$. Applying Proposition  \ref{prop:guillouleroux}, one deduces that there is no cycle of bricks.  Consequently, one gets an order $\leq$ 
on $B$, writing~:
$$b\leq b'\enskip\Longleftrightarrow \enskip b'\in\bigcup_{ n\geq 0	
}\varphi_+^n(\{b\}).$$  In other words, $b\leq b'$ if and only if there exists a sequence
$(b_j)_{0\leq j\leq n}$, $n\geq 0$, in $B$ such that:
\begin{itemize}
\item $b_0=b$ and $b'_n=b'$ ;
\item $f(b_j)\cap b_{j+1}\not=\emptyset$ for every
$j\in\{0,\dots,n-1\}$. 
\end{itemize}

In the case where ${\mathcal
D}$ is maximal, then two adjacent bricks $b$ and $b'$ are comparable because $b\cup b'$ is not free. More precisely, there is a natural 
orientation of
$\Sigma(D)$ defined as follows: every edge
$e\in E$ is oriented in such a way that $f(r(e))\cap l(e)\not=\emptyset$,
where
$r(e)$ and
$l(e)$ are the two bricks containing $e$, the first one on the right, the second one on the left. Note that a vertex is neither
the final point of three edges nor the initial point of three edges (otherwise there will be a cycle of bricks of length $3$). 

\begin{lemma} \label{lemma:withnofixedpoints} Let $f$ be a non recurrent fixed point free homeomorphism of $S$ and ${\mathcal
D}=(V,E,B)$ be a maximal free brick decomposition of $f$. The following conditions are equivalent, for a given brick $b\in B$:

\begin{enumerate}

\item there exists $b'$ adjacent to $b$ such that $f(b)\cap b'\not=\emptyset$,

\item $b_{\geq}$ is connected;
\end{enumerate}
and they imply that $b_{>}$ is connected. In the case where these conditions are satisfied, we will say that {\em $b$ is positively regular.} Otherwise we will say that 
 {\em $b$ is positively singular.} 

\end {lemma}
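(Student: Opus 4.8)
The plan is to unwind the definitions and exploit the natural orientation of the skeleton $\Sigma(\mathcal D)$. Since $\mathcal D$ is maximal, every edge $e$ lies in exactly two bricks $r(e)$ (on the right) and $l(e)$ (on the left), and $f(r(e))\cap l(e)\neq\emptyset$; in particular $r(e)\leq l(e)$, and for two adjacent bricks $b,b'$ we know $b\cup b'$ is not free, so they are comparable. First I would observe that condition (1) says precisely that there is an edge $e$ with $b=r(e)$ (equivalently, an outgoing edge at some vertex bounding $b$, since $f(b)\cap l(e)\neq\emptyset$ forces nothing unless $b$ is the right brick — one has to be slightly careful: $f(b)\cap b'\neq\emptyset$ with $b'$ adjacent to $b$ means either $b\le b'$ via the common edge, i.e. $b=r(e)$, or the intersection is ``accidental'' and not realized along the common edge; but in a maximal decomposition adjacency already forces comparability, and $f(b)\cap b'\neq\emptyset$ with the common edge $e$ gives $b=r(e)$, so $b'=l(e)$). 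So (1) $\Leftrightarrow$ $b$ is the right brick of at least one of its boundary edges.

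Next I would prove (1) $\Rightarrow$ (2) and the connectedness of $b_>$. The set $b_\geq=\{b\}\cup\bigcup_{n\geq 1}\varphi_+^n(\{b\})=\{b\}\cup\varphi_+(b_\geq)$, and each $\varphi_+^n(\{b\})$ is connected because $\varphi_+$ preserves connectedness and $\{b\}$ is connected; moreover $b\in\varphi_+^n(\{b\})$ cannot happen (no cycle of bricks, by non-recurrence and Proposition \ref{prop:guillouleroux}). The issue is whether $\{b\}$ is ``glued'' to $\varphi_+(b_\geq)$, i.e. whether some brick of $\varphi_+(b_\geq)$ is adjacent to $b$. If (1) holds, pick $b'$ adjacent to $b$ with $f(b)\cap b'\neq\emptyset$; then $b'\in\varphi_+(\{b\})\subset\varphi_+(b_\geq)$ and $b'$ is adjacent to $b$, so $b_\geq$ is connected, and $b_>=\varphi_+(b_\geq)\setminus$ (possibly nothing, since $b\notin\varphi_+(b_\geq)$) — actually $b_>=b_\geq\setminus\{b\}$, and this equals the increasing union of the connected sets $\varphi_+^n(\{b\})$ for $n\ge1$, which are nested in the sense of forming a connected chain through $b'$, so $b_>$ is connected too.

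For (2) $\Rightarrow$ (1) I would argue contrapositively: if (1) fails, then $b$ is the \emph{left} brick of every one of its boundary edges, i.e. for every edge $e\subset\partial b$ we have $l(e)=b$ and the arrow points \emph{into} $b$, so $r(e)<b$ for all such $e$. Then no brick adjacent to $b$ lies in $b_{\geq}$: indeed an adjacent brick $b'=r(e)$ satisfies $b'<b$, hence $b'\notin b_\geq$ unless $b'\geq b$ as well, impossible since $<$ is a strict order (no cycles). Therefore $b_\geq\setminus\{b\}$, if nonempty, is separated from $b$ by the skeleton: $b$ is a connected component of $b_\geq$ by itself and $b_\geq$ is disconnected (as long as $b_>\neq\emptyset$; if $b_>=\emptyset$ then $b_\geq=\{b\}$ is connected, but in that degenerate case (1) also must be examined — here $f(b)$ meets some brick since $f(b)\subset S$ is nonempty, and that brick is adjacent to $b$ across the skeleton or equals $b$; freeness rules out $b$ itself, and if the meeting brick is adjacent the arrow direction gives (1), contradiction; so this degenerate case cannot occur when (1) fails). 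Hence $\neg(1)\Rightarrow\neg(2)$.

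The main obstacle I anticipate is the bookkeeping around the phrase ``$b'$ adjacent to $b$ such that $f(b)\cap b'\neq\emptyset$'': one must be careful that the intersection $f(b)\cap b'$ is witnessed \emph{along the common edge} so that it really identifies $b$ as $r(e)$ for that edge $e$, rather than being an incidental overlap elsewhere. In a maximal free decomposition this is fine because every edge is two-sided and the orientation is defined exactly by this incidence relation, but making this precise — and handling the vertices, where three edges meet and one must rule out all-incoming or all-outgoing configurations as already noted in the text — is the delicate point. Everything else is formal manipulation of $\varphi_+$, its commutation with unions, and the order $\leq$ it induces.
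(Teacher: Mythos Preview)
Your overall strategy matches the paper's, but the $(1)\Rightarrow(2)$ direction has a genuine gap. You write $b_{\geq}=\{b\}\cup\varphi_+(b_{\geq})$ and argue that (1) glues $\{b\}$ to $\varphi_+(b_{\geq})$ via the adjacent brick $b'$; but this only yields connectedness of $b_{\geq}$ \emph{provided} $\varphi_+(b_{\geq})=b_>$ is already connected, which is exactly what you are trying to prove. Your fallback --- that $b_>$ is the ``increasing union'' of the connected sets $\varphi_+^n(\{b\})$, ``nested \dots\ through $b'$'' --- is also incorrect: these sets are not nested, and $b'$ lies in $\varphi_+(\{b\})$ but need not lie in $\varphi_+^n(\{b\})$ for $n\geq 2$.

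The missing step is the one the paper uses: under (1), the set $\{b\}\cup\varphi_+(\{b\})$ is connected (since $b'\in\varphi_+(\{b\})$ is adjacent to $b$), hence so is its image
\[
\varphi_+^n\bigl(\{b\}\cup\varphi_+(\{b\})\bigr)=\varphi_+^n(\{b\})\cup\varphi_+^{n+1}(\{b\})
\]
for every $n\geq 0$. Consecutive such sets share $\varphi_+^{n+1}(\{b\})$, so their union over $n\geq 0$ (namely $b_{\geq}$) is connected, and likewise the union over $n\geq 1$ (namely $b_>$). Your $(2)\Rightarrow(1)$ contrapositive is correct in substance; note only that the ``degenerate case'' $b_>=\emptyset$ cannot occur simply because $f(b)$ meets \emph{some} brick (not necessarily one adjacent to $b$, as you write), and that your bookkeeping worry about identifying $b$ as $r(e)$ for the common edge is unnecessary --- condition (1) is literally ``some $b'\in\varphi_+(\{b\})$ is adjacent to $b$'', and nothing more is needed.
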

\begin{proof} The inclusion $(2)\Rightarrow (1)$ is obvious. Indeed, $b_{\geq}$ contains $b$ and is not reduced to $b$, so its contains a brick $b'$ adjacent to $b$ if it is connected. Of course, $f(b')\cap b=\emptyset$ because $b'\geq b$ and so we have $f(b)\cap b'\not=\emptyset$. 
To prove the inclusion $(1)\Rightarrow (2)$, note that $\varphi_+(\{b\})$ is connected for every $b\in B$ and so $\{b\}\cup\varphi_+(\{b\})$ is also connected if $(1)$ is satisfied. Consequently, each set $$\varphi_+^n(\{b\})\cup\varphi_+^{n+1}(\{b\})=\varphi_+^n(\{b\}\cup\varphi_+(\{b\})),$$  is connected. Noting that $\varphi_+^n(\{b\})\cup\varphi_+^{n+1}(\{b\})$ and  $\varphi_+^{n+1}(\{b\})\cup\varphi_+^{n+2}(\{b\})$ intersect and writing
$$b_{\geq}=\bigcup_{n\geq 0}  \varphi_+^n(\{b\})= \bigcup_{n\geq 0} \varphi_+^n(\{b\})\cup\varphi_+^{n+1}(\{b\}), $$
we deduce that $b_{\geq}$ is connected. We also deduce that $b_{>}$ is connected because
$$b_{>}=\bigcup_{n\geq 1}  \varphi_+^n(\{b\})= \bigcup_{n\geq 1} \varphi_+^n(\{b\})\cup\varphi_+^{n+1}(\{b\}). $$
\end{proof}

 Similarly we will say that  $b$ is {\it negatively regular} if $b_{\leq}$ is connected, meaning that there is a brick $b'$ adjacent to $b$ such that $b'\leq b$. Otherwise it will be called {\it negatively singular}. We will say that $b$ is {\it regular} if it is both positively and negatively regular and {\it singular} otherwise.
 
\subsection{Attractors and repellers}

Let $f$ be a homeomorphism of $S$ and ${\mathcal
D}=(V,E,B)$ a  brick decomposition of $S$. Let us call {\it attractor} a set $A\subset B$ such that 
$\varphi_+(A)\subset A$ or equivalently such that $f(A)\subset \mathrm{int} (A)$. In particular, if $(A_j)_{j\in J}$ is a family of attractors, then $\bigcup_{j\in J}
A_j$ and $\bigcap_{j\in J}
A_j$ are attractors.

 Of course, if $A$ is an attractor, $\varphi_+$ sends components of $A$ into components. A component $C$ satisfying $\varphi_+(C)\subset C$ is called {\it regular}, a component $C$ satisfying $\varphi_+(C)\cap C=\emptyset$ is called {\it singular}. 
 
 In the case where $f$ is a non recurrent fixed point free homeomorphism of $S$ and ${\mathcal
D}=(V,E,B)$ is a maximal free brick decomposition of $S$, then $C$ is regular if and only if contains at least two bricks, and singular if and only if it is reduced to a positively singular brick. Indeed, let $C$ be a component of $A$ and denote $C'$ the component of $A$ containing $\varphi_+(C)$. If $\# C\geq 2$, then $C$ contains two adjacent bricks and so $\varphi_+(C)\cap C\not=\emptyset$. Consequently one has $C'=C$.   If $C$ is reduced to a brick $b$, then $C'\not=C$ because $b$ is free. Moreover, $C\cup C'$ is not connected, which implies that $\varphi_+(\{b\})$ does not contain a brick adjacent to $b$. So $b$ is positively singular. 

Similarly, a {\it repeller} is a set $R\subset B$ such that
$\varphi_-(R)\subset R$ or equivalently such that $f^{-1}(R)\subset \mathrm{int} (R)$. We have a similar classification for its components. Note also that $R$ is a repeller if and only if $B\setminus R$ is an attractor. In fact, a couple $(R,A)$ such that $\{R,A\}$ is a partition of $B$ that consists of a repeller $R$ and an attractor $A$, is  a cut of $\leq$. Observe that $\partial R=\partial A$.

 \subsection{New and useful results}

We will finish this section by proving some new results on brick decompositions that will be useful in our purpose. We suppose that $f$ is a non recurrent orientation preserving homeomorphism of the $2$-sphere $\S^2$ and that ${\mathcal
D}=(V,E,B)$ is a maximal free brick decomposition of $f_{\vert \S^2\setminus\mathrm{fix}(f)}$. We denote $\leq$ the induced order on $B$ and $\varphi_+$, $\varphi_-$ the induced maps on ${\mathcal P}(B)$\footnote{Of course, by considering its extension to the Alexandrov compactification of $\D$, everything will work for a homeomorphism $f$ of $\D$ if we replace $\S^2\setminus \mathrm{fix}(f)$ with $\D\setminus \mathrm{fix}(f)$. }. 

We begin with the following lemma:

\begin{lemma} \label{lemma:singular brick}  The boundary of a singular brick is a line of $\S^2\setminus\mathrm{fix}(f)$, whose closure in $\S^2$ meets a unique connected component of $\mathrm{fix}(f)$.  \end {lemma}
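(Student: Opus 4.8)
The plan is to analyze the boundary $\partial b$ of a singular brick $b$, which by the structure of a maximal free brick decomposition is a $1$-submanifold of $\S^2\setminus\mathrm{fix}(f)$ each of whose edges is contained in exactly two bricks. First I would argue that $\partial b$ is a line, i.e.\ a connected, properly embedded copy of $\R$ in $\S^2\setminus\mathrm{fix}(f)$. Connectedness comes from the fact that $b$, being a brick, is a surface with connected boundary (or, if $\partial b$ had several components, each would bound; I would use that a singular component $C=\{b\}$ satisfies $\varphi_+(\{b\})\cap\{b\}=\emptyset$, and a symmetric statement should force $\varphi_-(\{b\})\cap\{b\}=\emptyset$ when $b$ is also negatively singular — but the lemma as stated concerns \emph{singular} bricks, so I should first clarify whether ``singular'' here means ``positively or negatively singular'' or ``both'', and treat the relevant case). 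The key local observation is that at each vertex of $\partial b$, exactly three edges meet and $b$ occupies two of the three ``corners'', so $\partial b$ is locally a $1$-manifold near every vertex; hence $\partial b$ is a disjoint union of lines and circles in $\S^2\setminus\mathrm{fix}(f)$. It cannot contain a circle $\Gamma$: such a $\Gamma$ would separate $\S^2$, and one of the two components of $\S^2\setminus\Gamma$ would be a disk free of $\mathrm{fix}(f)$ whose brick content is $f$-invariant in a way contradicting non-recurrence — more directly, a circle edge/boundary in a maximal free decomposition would make $b$ a brick with compact boundary, and one checks (via Brouwer theory / Proposition \ref{prop:franks}) this cannot happen for a free brick in the non-recurrent case. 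So $\partial b$ is a single line $\lambda$.

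Next I would identify the closure $\overline{\lambda}$ in $\S^2$. Since $\lambda$ is a properly embedded line in the open surface $\S^2\setminus\mathrm{fix}(f)$, its two ends accumulate only on $\mathrm{fix}(f)$, so $\overline{\lambda}\setminus\lambda\subset\mathrm{fix}(f)$. The heart of the matter is to show both ends accumulate on the \emph{same} connected component of $\mathrm{fix}(f)$. Here I would use the singularity hypothesis dynamically: because $b$ is singular, $\varphi_\pm(\{b\})$ contains no brick adjacent to $b$, which says that $f$ (resp.\ $f^{-1}$) pushes $b$ entirely off its own boundary in a coherent direction; in the orientation of $\Sigma(\mathcal D)$, all edges of $\lambda$ are oriented consistently (the vertex condition — no vertex is the head or tail of all three incident edges — together with singularity of $b$ forces $\lambda$ to be a monotone ``one-sided'' line). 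Then $\lambda$ together with a component $K$ of $\mathrm{fix}(f)$ it limits on should be shown to cut $\S^2$ into exactly two disks, $L(\lambda)$-side and $R(\lambda)$-side, one of which is $f$-positively-invariant and the other $f$-negatively-invariant up to the brick dynamics; the two ends of $\lambda$ are then forced to land on $K$ because otherwise $\lambda$ would separate two distinct fixed components and one could build a closed chain of submanifolds (bricks) around one of them, contradicting non-recurrence via Proposition \ref{prop:guillouleroux}.

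The step I expect to be the main obstacle is precisely this last point: ruling out that the two ends of $\lambda$ limit on \emph{different} components of $\mathrm{fix}(f)$, and more subtly that a single end could accumulate on more than one component. The clean way is to exploit the cut structure: a singular brick $b$ gives a partition close to a cut $(b_{<}\cup\{b\},\ b_{>})$ or its dual, hence an attractor/repeller pair, hence a cut of $\leq$; translating a cut of $\leq$ back to $\S^2$ yields a pair $(R,A)$ with $\partial R=\partial A=\lambda$, so $\lambda$ is the common frontier of an attractor and a repeller. An attractor's closure in $\S^2$ and a repeller's closure can only share points of $\mathrm{fix}(f)$, and the Brouwer-theoretic fact that $f$ restricted to the union $W$ built from a Brouwer line is conjugate to a translation (so all points are wandering) should force the accumulation set of $\lambda$ to be a single ``prime-end-like'' fixed component. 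I would make this rigorous by a connectedness argument on $\overline{A}\cap\overline{R}\cap\mathrm{fix}(f)$: if it met two components, I would interpolate an arc in $A$ and an arc in $R$ near $\lambda$ to produce a free closed chain of length $2$ linking the two, contradicting non-recurrence. If that interpolation turns out technically delicate, the fallback is the index argument — two distinct limiting fixed components plus the monotone line $\lambda$ would let one build a simple loop of index $1$ around one of them (Proposition \ref{prop:brouwer}), again contradicting that $f$ is fixed-point-free on $\S^2\setminus\mathrm{fix}(f)$ in the relevant region. I would write the details using the orientation of $\Sigma(\mathcal D)$ and the ``no cycle of bricks'' property as the bookkeeping tool throughout.
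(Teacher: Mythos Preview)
Your plan has real gaps, beginning with a confusion about what singularity gives you. If $b$ is negatively singular, then for every adjacent brick $b'$ one has $f(b')\cap b=\emptyset$, and maximality then forces $f(b)\cap b'\neq\emptyset$; thus $f(b)$ meets \emph{every} brick adjacent to $b$ (so $b$ is automatically positively regular --- your worry about ``both'' dissolves, and your claim that ``$\varphi_\pm(\{b\})$ contains no brick adjacent to $b$'' is exactly backwards). This is the engine of the paper's proof: $f(b)$ is connected and disjoint from $\partial b$ (freeness), yet touches every neighbor of $b$. Hence for any boundary component $\Gamma$ that separates a disk containing it, all neighbors of $b$ lie on the side of $\Gamma$ containing $f(b)$. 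Applied to a line $\Gamma$ limiting on a single fixed component $K$ (so $\Gamma$ separates the disk component of $\S^2\setminus K$), this forces $\partial b=\Gamma$ outright --- connectedness of $\partial b$ is a \emph{conclusion}, not an input, and your ``a brick is a surface with connected boundary'' is not true a priori. Applied to a circle $\Gamma$, the same reasoning makes $b$ compact with a finite set $X$ of neighbors; then for each $b'\in X$ the set $f^{-1}(b')$ meets $b$ but is not contained in $b$, hence meets some $b''\in X$ with $b''<b'$, an infinite descent in the finite set $X$. Your vague appeal to ``$f$-invariant brick content'' or Proposition~\ref{prop:franks} does not produce this.

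For the remaining case --- a line $\Gamma$ limiting on two distinct fixed components $K_-,K_+$ --- the paper's argument is again different from your closed-chain or index proposals and uses the two ingredients above in tandem. Here $\Gamma$ lies in an annular component $W$ of $\S^2\setminus(K_-\cup K_+)$ and joins its two ends, so $\Gamma$ does \emph{not} separate $W$; consequently $\partial b$ must have another component. Positive regularity makes $b_\geq$ and $b_>$ connected, while negative singularity puts every neighbor of $b$ into $b_>$; together these force $b_\geq$ to contain a simple loop $\Gamma'$ essential in $W$. But $f(\Gamma')\subset\mathrm{int}(b_>)$ is disjoint from $\Gamma$, hence inessential in $W$ --- contradiction. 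Your ``interpolate arcs in $A$ and $R$ to build a free closed chain of length $2$'' would have to manufacture disjoint free submanifolds with the right forward/backward intersections from the geometry of $\lambda$ alone, and nothing in your sketch indicates how; the index fallback is even further from a proof.
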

\begin{proof} It is sufficient to prove the lemma for negatively singular bricks. Let $b$ such a brick. Its boundary is a boundaryless $1$-submanifold of $\S^2\setminus\mathrm{fix}(f)$ and each of its component is 
\begin{itemize}
\item  a simple loop,
\item  or a line of $\S^2\setminus\mathrm{fix}(f)$ whose closure in $\S^2$ meets a unique connected component $K$ of $\mathrm{fix}(f)$,
\item  or a line of $\S^2\setminus\mathrm{fix}(f)$ whose closure in $\S^2$ meets exactly two connected component $K_-$ and $K_+$ of $\mathrm{fix}(f)$.
\end{itemize}

Let us prove first that if there exists a component $\Gamma$ of $\partial b$ which is a line of $\S^2\setminus\mathrm{fix}(f)$ whose closure in $\S^2$ meets exactly one connected component $K$ of $\mathrm{fix}(f)$, then  $\partial b=\Gamma$. The line $\Gamma$ is contained in a component $W$ of $\S^2\setminus K$, this component is homeomorphic to $\D$ and $\Gamma$ separates $W$.  We denote $U$ the connected component of $W\setminus\Gamma$ that contains $\mathrm{int}(b)$ and $V$ the other one. The bricks $b'\not=b$ adjacent to $\Gamma$ are included in $\overline V$.  The brick $b$ being negatively singular, $f(b)$ meets every brick adjacent to $b$. Moreover $f(b)$ is connected and disjoint from $\Gamma$ because $b$ is free.  So every brick adjacent to $b$ is in $\overline V$. This implies that $\partial b$ has no component but $\Gamma$.

Let us prove now that no component of $\partial b$ is a loop. We argue by contradiction and suppose that such a component $\Gamma$ exists. It separates $\S^2$. We denote $U$ the connected component of $\S^2\setminus\Gamma$ that contains $\mathrm{int}(b)$ and $V$ the other one. The same proof as above tells us that every brick adjacent to $b$ is in $\overline V$. One deduces that $b=\overline U$. In this situation, the set $X$ of bricks adjacent to  $b$ is finite. For every $b'\in X$, the set $f^{-1}(b')$ meets $b$ but is not included in $b$ (because the image by $f^{-1}$ of an edge $e\subset b\cap b'$ is disjoint from $b$). Moreover $f^{-1}(b')$ is connected and so it meets another brick of $X$. We have proved that for every $b'\in X$ there exists $b''\in X$ such that $b''<b'$. This is not compatible with the finiteness of $X$.

Let us conclude by proving that the third case never occurs. We argue by contradiction and consider a component $\Gamma$ of $\partial b$ which is a line of $\S^2\setminus\mathrm{fix}(f)$ whose closure in $\S^2$ meets exactly two connected component $K_-$ and $K_+$ of $\mathrm{fix}(f)$. The line $\Gamma$ is contained in a component $W$ of $\S^2\setminus (K_-\cup K_+)$, this component is homeomorphic to the annulus $\S\times\R$ and $\Gamma$ does not separate $W$ but  joins its two ends. Consequently, $\Gamma$ is not the unique component of $\partial b$. The brick $b$ being positively regular,  Lemma \ref{lemma:withnofixedpoints} tells us that
$b\geq$ and $b_{>}=b_{\geq}\setminus\{b\}$ are connected. The brick $b$ being negatively singular, every brick adjacent to $b$ belongs to $b_{>}$. The connectedness of $b$ and $b_{>}$ implies that $b_{\geq}$ contains an essential simple loop $\Gamma'$, meaning non homotopic to zero in $W$.  Its image by $f$ is contained in the interior of  $b_{>}$ and consequently does not meet $\Gamma$. So it cannot be essential and we have got a contradiction.\end{proof}

\begin{corollary} \label{corollary:singular}  Let $(R,A)$ be a cut of $\leq$. If a singular component $R'$ of $R$ is adjacent to a component $A'$ of $A$, then $A'$ is the only component of $A$ that is adjacent to $R'$. \end{corollary}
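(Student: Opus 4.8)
The plan is to reduce the corollary to a combinatorial fact about the arrangement of the bricks that surround a singular brick. First I would use that, since $f$ is non recurrent and $\mathcal D$ is maximal, the classification of the components of a repeller forces the singular component $R'$ to be reduced to a single negatively singular brick $b$; Lemma~\ref{lemma:singular brick} then says that $\partial b$ is a line $\Gamma$ of $\S^2\setminus\mathrm{fix}(f)$. Next I would record the easy observation that every brick adjacent to $b$ lies in $A$: if a brick $b'\in R$ were adjacent to $b$, then $R'\cup\{b'\}$ would be a connected subset of $R$ strictly containing the component $R'=\{b\}$, a contradiction.

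The heart of the matter is to prove that the set $X$ of all bricks adjacent to $b$ is connected as a set of bricks. Writing $\Gamma$ as its chain of edges $(e_n)$ with intermediate vertices $(v_n)$, each $e_n$ lies in exactly two bricks, one of which is $b$; let $X_n$ be the other one, so $X=\{X_n\}$. At a vertex $v_n$ the three incident edges are $e_{n-1}$, $e_n$ and a third edge $g_n$; this $g_n$ is not an edge of $\Gamma$ and cannot lie on the side of $\mathrm{int}(b)$ (which contains no edge), so inspecting the three sectors around $v_n$ one sees that the sector bounded by $e_{n-1}$ and $g_n$ is occupied by $X_{n-1}$ and the sector bounded by $g_n$ and $e_n$ by $X_n$. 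Hence $X_{n-1}$ and $X_n$ both contain $g_n$ and are therefore adjacent (or equal), so the adjacency graph of $X$ contains the path $\ldots X_{-1}X_0X_1\ldots$ and $X$ is connected. The degenerate cases, where an edge of $\Gamma$ is itself a half line or a line and $\partial b$ has at most one vertex, are immediate since $X$ then has at most two elements.

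To conclude, $X$ is a connected subset of $A$, hence lies in a single component $A'$ of $A$, and any component of $A$ adjacent to $R'=\{b\}$ must contain one of the bricks $X_n\in X$, so it must equal $A'$. The one step where I expect to have to be careful is the local analysis at the vertices $v_n$ — excluding that $g_n$ be contained in $\Gamma$, and allowing that distinct indices may give the same brick $X_n$ — where one has to invoke that $\partial b$ is a genuine $1$-submanifold and that every edge of $\mathcal D$ belongs to exactly two bricks.
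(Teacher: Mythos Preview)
Your argument is correct, but the paper takes a shorter and more topological route. Instead of analysing the chain of neighbouring bricks edge by edge, the paper simply observes that $\partial R=\partial A$ as a $1$-submanifold of $\S^2\setminus\mathrm{fix}(f)$, and that each connected component of this common boundary lies in $\partial R_j$ for exactly one component $R_j$ of $R$ and in $\partial A_k$ for exactly one component $A_k$ of $A$ (distinct brick-components of $R$, resp.\ of $A$, being disjoint even as subsets of the surface). Since the singular component $R'$ is the single negatively singular brick $b$, Lemma~\ref{lemma:singular brick} gives that $\partial R'=\partial b$ is connected; it is therefore a full component of $\partial A$ and hence contained in $\partial A'$ for a unique $A'$. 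Your combinatorial argument reproves this last step by hand: showing that consecutive bricks $X_{n-1},X_n$ along $\Gamma$ share the third edge $g_n$ at each vertex $v_n$ is exactly the statement that the collar of $\Gamma$ on the non-$b$ side is brick-connected. The paper's approach is quicker and bypasses the local vertex analysis; yours has the merit of staying entirely inside the brick combinatorics and making the adjacency structure explicit, which also shows directly why the degree-$3$ condition on vertices and the ``two bricks per edge'' property of maximal decompositions are what drive the result.
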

\begin{proof} In that case, $R$ and $A$ have the same boundary and every component of this boundary is the boundary of a unique component of $R$ and of a unique component of $A$. By Lemma  \ref{lemma:singular brick}, the boundary of $R'$ is connected. It is a connected component of the boundary of $R$, and so is a connected component of  the boundary of a component $A'$ of $A$, moreover $A'$ is the unique component of $A$ that is adjacent to $R'$.\end{proof}

Of course, one proves similarly that if a singular component $A'$ of $A$ is adjacent to a component $R'$ of $R$, then $R'$ is the only component of $R$ that is adjacent to $A'$.

\begin{proposition} \label{proposition:components}  Let $(R,A)$ be a cut of $\leq$. If $R'$ is a regular component of $R$ and $A'$ a regular component of $A$ and if there exists $n\geq 1$ such that $f^n(R')\cap A'\not=\emptyset$, then $R'$ and $A'$ are adjacent. \end {proposition}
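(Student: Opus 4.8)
I would argue by contradiction: suppose $R'$ and $A'$ are not adjacent. Unwinding the definition of $\varphi_+$, the hypothesis $f^n(R')\cap A'\ne\emptyset$ produces bricks $r\in R'$, $b\in A'$ and a sequence $c_0=r,c_1,\dots,c_n=b$ with $f(c_i)\cap c_{i+1}\ne\emptyset$ for every $i$; in particular $r\le b$ and $r\le c_i\le b$ for all $i$. The plan is to follow this chain, see where it crosses from $R$ to $A$, and show that it must do so ``along the boundary'' of $R'$ and $A'$.

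The first step is to manufacture two auxiliary cuts. Since $R'$ is a \emph{regular} component of the repeller $R$ one has $\varphi_-(R')\subset R'$; running $\varphi_-$ backwards along chains then shows that the $\le$-down-set of a brick of $R'$ stays in $R'$ and that no brick outside $R'$ lies below a brick of $R'$, so that $(R',B\setminus R')$ is a cut of $\le$. Dually, $\varphi_+(A')\subset A'$ gives that $(B\setminus A',A')$ is a cut. Setting $M:=B\setminus(R'\cup A')$, this means $M\cup A'$ is $\le$-up-closed and $R'\cup M$ is $\le$-down-closed. Feeding this into the chain, it splits as a nonempty block in $R'$, then a (possibly empty) block in $M$, then a nonempty block in $A'$: say $c_0,\dots,c_p\in R'$, $c_{p+1},\dots,c_q\in M$, $c_{q+1},\dots,c_n\in A'$ — once the chain leaves $R'$ it cannot return, and once it reaches $A'$ it stays.

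Next I would pin down which components of $R$ and $A$ the chain visits. Running $\varphi_-$ backwards as above, whenever the chain enters a regular component of $R$ it must have entered it earlier, forcing that component to be $R'$; and whenever it enters a regular component of $A$ it can never leave it, forcing that component to be $A'$ (the chain must end in $A'$). Hence every component of $R$ or $A$ met by the middle block is a \emph{singular} one, i.e. a single brick. This is where Lemma~\ref{lemma:singular brick} and Corollary~\ref{corollary:singular} enter: the boundary of such a singular brick is a single line forming a whole connected component of $\partial R=\partial A$, so each singular brick of $R$ is adjacent to exactly one component of $A$, and dually. Tracking the chain therefore exhibits $R'$ and $A'$ as the two ends of a ``string'' of singular bricks, each glued to the next along a single boundary line; combining this with the connectedness of the sets $\varphi_+(\{c_i\})$ and with Lemma~\ref{lemma:withnofixedpoints} (for a positively regular brick $c$, $\{c\}_{\ge}$ is connected and contains a brick adjacent to $c$), one pushes the crossings at the $R'\to M$ and $M\to A'$ steps along this string to produce a brick of $R'$ adjacent to a brick of $A'$, contradicting the assumption. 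When the middle block is empty one runs the same local argument directly on $c_p\in R'$, $c_{p+1}\in A'$.

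The main obstacle is exactly this last manipulation. The chain supplied by the hypothesis is only a chain for the relation ``$f(c_i)$ meets $c_{i+1}$'', not a chain of pairwise adjacent bricks, so passing from it to genuine adjacency of $R'$ and $A'$ requires the rigidity of singular bricks furnished by Lemma~\ref{lemma:singular brick} and Corollary~\ref{corollary:singular} to control how the intermediate bricks are attached; everything else — checking that $(R',B\setminus R')$ and $(B\setminus A',A')$ are cuts, and that the chain has the asserted monotone shape across them — is routine order-theoretic bookkeeping.
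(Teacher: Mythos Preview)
Your setup is sound: the two auxiliary cuts $(R',\,B\setminus R')$ and $(B\setminus A',\,A')$ are genuine cuts (a regular component of a repeller is itself a repeller, and dually), and the chain does split monotonically as you describe, with every brick in the middle block lying in a singular component of $R$ or of $A$. That much is correct and cleanly argued.

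The gap is precisely where you flag it, and it is not closed. The chain relation $f(c_i)\cap c_{i+1}\neq\emptyset$ carries no adjacency information, and Corollary~\ref{corollary:singular} only tells you which component of $A$ a singular brick of $R$ is \emph{adjacent} to --- not which component of $A$ its $f$-image meets. Your ``string'' picture conflates these two relations: consecutive singular bricks $c_i,c_{i+1}$ in the middle block need not be adjacent to one another, and a singular brick $c_i\in R$ may be adjacent to a component of $A$ having nothing to do with $A'$ or with $c_{i+1}$. Even in the degenerate case of an empty middle block, knowing $f(c_p)\cap c_{p+1}\neq\emptyset$ with $c_p\in R'$, $c_{p+1}\in A'$ gives $c_p<c_{p+1}$ but produces no edge between $R'$ and $A'$; the ``same local argument'' you invoke is never actually stated. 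Connectedness of $\varphi_+(\{c_i\})$ and Lemma~\ref{lemma:withnofixedpoints} do not bridge this: for instance, when $c_q\in R$ is negatively singular, $\varphi_+(\{c_q\})$ is connected and meets both $A'$ and the unique component $A''$ of $A$ adjacent to $c_q$, but it may also meet $R$, so one cannot conclude $A''=A'$.

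The paper's argument is quite different and avoids chains entirely. One sets $X$ to be $R'$ together with all components of $A$ adjacent to $R'$, and proves the topological fact $f(\partial X)\subset\mathrm{int}(X)$: any edge $e\subset\partial X$ lies in a component $A''$ of $A$ adjacent both to $R'$ and to some other component of $R$, so $A''$ is regular by Corollary~\ref{corollary:singular}, whence $f(e)\subset\mathrm{int}(A'')\subset\mathrm{int}(X)$. If $A'$ were disjoint from $X$, the increasing connected sets $f^{-k}(A')$ eventually meet $X$; at the first such $k$ they must cross $\partial X$, forcing $f^{-(k-1)}(A')$ to meet $\mathrm{int}(X)$ already --- a contradiction. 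This is a barrier argument in the surface, not a combinatorial chase; your order-theoretic reduction, while correct as far as it goes, does not supply a substitute for it.
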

\begin{proof} 

Let $X$ be the union of $R'$ and of the components of $A$ that are adjacent to $R'$. It is a connected set and we want to prove that it contains $A'$. 
Let $e$ be an edge included in $\partial X$. Denote $b$ the brick containing $e$ and included in $X$ and $b'$ the other brick.  Of course $b\not\in R'$, otherwise $b'$ would belong to a component of $A$ adjacent to $R'$ and so would belong to $X$. So, $b$ belongs to a connected component $A''$ of $A$  that is adjacent to $R'$ and $b'$ is contained in a connected component of $R$ adjacent to $A''$ and different from $R'$. By Corollary  \ref{corollary:singular}, it implies that $A''$ is regular and we have $f(e)\subset f(A'')\subset \mathrm{int}( A'')\subset \mathrm{int}(X)$. So, $X$ is a connected set such that $f(\partial X)\subset  \mathrm{int}(X)$. Moreover, if  $A'$ is not adjacent to $R'$, then $A'\cap X=\emptyset$ in $B$ but also in $\S^2$. By hypothesis, the sequence 
$(f^{-k}(A'))_{k\geq 0}$ is increasing and for $k$ large enough it holds that $f^{-k}(A')\cap R'\not=\emptyset$. Consequently, there exists $k\geq 0$ such that $f^{-k}(A')\cap X=\emptyset$ and $f^{-k-1}(A')\cap X\not=\emptyset$. But $f^{-k-1}(A')$ is connected and not included in $X$. It implies that  $f^{-k-1}(A')\cap \partial X\not=\emptyset$, which implies that $f^{-k}(A')\cap \mathrm{int}(X)\not=\emptyset$. We have found a contradiction.\end{proof}

\section{A preliminary lemma} \label{section:translation arcs}

We begin from now on the proof of Theorem \ref{th:principal}. We suppose in this section that $f:\D\to\D$ satisfies the hypothesis of the theorem and is not recurrent. The final goal is to prove that there is a contradiction. For every $i\in\Z/p\Z$ and every $k\in\Z$ we define $z_i^k=f^k(z_i)$.

Let $(X^k)_{k\geq 0} $ be a sequence of subsets of $\D$ and $\zeta\in \S$. We will write $\lim_{k\to +\infty} X^k =\zeta$ if for every neighborhood $U$ of $\zeta$ in $\D\cup\{\zeta\}$, there exists $K\geq0$ such that $X^k\subset U$, for every $k\geq K$.

 Let us begin with a result whose detailed proof can be found in \cite{LeC2}:

\begin{lemma} \label{lemma:withnofixedpoints}There exists a sequence $(V_i^k)_{k\in\Z}$ of open disks such that:

\begin{enumerate}

\item $V_i^k$ contains $z_i^k$ and  $z_i^{k+1}$;

\item $V_i^k\cap\mathrm{fix}(f)=\emptyset$;

\item $\lim_{k\to -\infty} V_i^k =\alpha_i$;

\item $\lim_{k\to +\infty} V_i^k =\omega_i$.
\end{enumerate}

\end {lemma}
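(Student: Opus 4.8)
The plan is to build the open disks $V_i^k$ locally by connecting consecutive points $z_i^k$ and $z_i^{k+1}$ with small disks, and then use the convergence hypotheses of Theorem \ref{th:principal} together with the continuous extension of $f$ to control the behaviour at the ends. First I would fix $i$ and note that the orbit $(z_i^k)_{k\in\Z}$ is a proper sequence in $\D$ accumulating only at $\alpha_i$ (as $k\to-\infty$) and at $\omega_i$ (as $k\to+\infty$), since $\lim_{k\to-\infty}f^k(z_i)=\alpha_i$ and $\lim_{k\to+\infty}f^k(z_i)=\omega_i$ with $\alpha_i,\omega_i\in\S$; in particular the orbit is contained in a fixed-point-free region near each end because $f$ has no fixed point on $\S$ near $\alpha_i$, $\omega_i$ and those are not fixed (they are limits of a genuinely moving orbit). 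For each $k$, choose a segment $\gamma_i^k$ from $z_i^k$ to $z_i^{k+1}$; thickening it slightly gives an open topological disk $W_i^k$ containing both endpoints, disjoint from $\mathrm{fix}(f)$ (possible since $\mathrm{fix}(f)$ is closed and the compact arc avoids it), and with diameter as small as we like.

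The main work is arranging conditions (3) and (4), i.e.\ the ``condition at infinity''. For the end $k\to+\infty$: given a neighbourhood $U$ of $\omega_i$ in $\D\cup\{\omega_i\}$, since $z_i^k\to\omega_i$ there is $K$ with $z_i^k\in U$ for all $k\ge K$; shrinking each $W_i^k$ (for $k\ge K$) enough — using that we may take its diameter smaller than the distance from $z_i^k$ to $\partial U$, which tends to make $W_i^k\subset U$ — yields $V_i^k\subset U$ for $k\ge K'$. Symmetrically for $k\to-\infty$ and $\alpha_i$. The subtlety, and the step I expect to be the main obstacle, is that a priori $z_i^k\to\omega_i$ does not by itself force $z_i^{k+1}$ to be ``close to $z_i^k$ relative to $\partial U$'' in a way that lets a single small disk contain both while staying inside $U$; one must use that $U$ can be taken to be (the trace in $\D$ of) a round disk centred at $\omega_i$, or more robustly that the extension of $f$ to $\D\cup\{\alpha_i,\omega_i\}$ is continuous at $\omega_i$, so $f(z_i^k)=z_i^{k+1}$ is forced close to $f(\omega_i)$; combined with $z_i^k\to\omega_i$ this pins down whether $\omega_i$ is a fixed point of the extension. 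If $f(\omega_i)\ne\omega_i$ the orbit would have to leave every small neighbourhood of $\omega_i$, contradicting $z_i^k\to\omega_i$; hence $f(\omega_i)=\omega_i$, and then continuity gives that for any neighbourhood $U$ of $\omega_i$ there is a smaller neighbourhood $U'$ with $f(U')\subset U$, so once $z_i^k\in U'$ both $z_i^k$ and $z_i^{k+1}=f(z_i^k)$ lie in $U$; choosing $W_i^k\subset U$ then works.

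Assembling: for each $i$ do the above, obtaining $W_i^k$ satisfying (1), (2), and the two limit conditions. Since the indices $i$ range over the finite set $\Z/p\Z$, no global compatibility between different $i$ is needed — conditions (1)--(4) are stated separately for each $i$ — so one simply takes $V_i^k=W_i^k$. (If one wanted the disks disjoint one could shrink further, but the statement does not demand it, and indeed $V_i^k\cap V_i^{k+1}\ni z_i^{k+1}$, so they are deliberately allowed to overlap.) I would remark that the detailed construction of the segments and their thickenings, together with the elementary fact that a compact arc in $\D\setminus\mathrm{fix}(f)$ has a small disk neighbourhood avoiding $\mathrm{fix}(f)$, is exactly the routine verification carried out in \cite{LeC2}, to which the statement already defers.
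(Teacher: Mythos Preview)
Your treatment of the limit conditions (3) and (4) is correct --- in particular the observation that the extension of $f$ must fix $\alpha_i$ and $\omega_i$. The gap is in the step you treat as routine: ``choose a segment $\gamma_i^k$ from $z_i^k$ to $z_i^{k+1}$ \dots\ disjoint from $\mathrm{fix}(f)$ \dots\ with diameter as small as we like''. You never justify either clause. Even granting (via Brown--Kister, say) that $z_i^k$ and $f(z_i^k)$ lie in the same component of $\D\setminus\mathrm{fix}(f)$, nothing prevents $\mathrm{fix}(f)$ from separating them \emph{inside the prescribed small neighbourhood $U$}; an arc joining them in $\D\setminus\mathrm{fix}(f)$ might have to make a long excursion away from $\omega_i$. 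So ``choosing $W_i^k\subset U$ then works'' is precisely where the content of the lemma is hidden. What you defer to \cite{LeC2} is not a routine thickening; it is the whole point.

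The paper's proof (following \cite{LeC2}) fills this gap with an argument that uses orientation preservation essentially. One first takes small disks $U_i^k$ satisfying (1), (3), (4) but not necessarily (2). Inside $U_i^k$ one draws three arcs $\delta_1,\delta_2,\delta_3$ from $z_i^k$ to $z_i^{k+1}$ with pairwise disjoint interiors. If some $\delta_j$ misses $\mathrm{fix}(f)$, thicken it. Otherwise let $\delta'_j$ be the initial portion of $\delta_j$ before its first fixed point. Because $f$ preserves orientation, the cyclic order of the germs $f(\delta'_1),f(\delta'_2),f(\delta'_3)$ at $z_i^{k+1}$ agrees with that of $\delta'_1,\delta'_2,\delta'_3$ at $z_i^k$; comparison with the cyclic order of the $\delta_j$ at $z_i^{k+1}$ forces some $f(\delta'_{j_1})$ to meet some $\delta'_{j_2}$. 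Concatenating sub-arcs then yields a fixed-point-free segment from $z_i^k$ to $z_i^{k+1}$ contained in $U_i^k\cup f(U_i^k)$, which one thickens to obtain $V_i^k$. Conditions (3) and (4) then follow immediately from $V_i^k\subset U_i^k\cup f(U_i^k)$ and the continuous extension of $f$, with no further shrinking needed.
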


\begin{proof} By hypothesis, one can find a sequence $(U_i^k)_{k\in\Z}$ of open disks such that

\begin{itemize} 

\item $U_i^k$ contains $z_i^k$ and  $z_i^{k+1}$;

\item $\lim_{k\to -\infty} U_i^k =\alpha_i$;

\item $\lim_{k\to +\infty} U_i^k =\omega_i$.
\end{itemize}

Fix $i\in\Z/p\Z$ and $k\in\Z$ and choose a homeomorphism $h: \C\to U_i^k$ such that 
$$h(-1)=z_i^k,\enskip h(1)=z_i^{k+1}.$$

Consider the  following segments, defined on [0,1], that join $z_i^k$ to $z_i^{k+1}$:
$$\delta_1 : t\mapsto h\left( e^{i\pi (1+t)}\right),\enskip \delta _2:  t\mapsto h( -1+2t),\enskip \delta_3 : t\mapsto h\left( e^{i\pi (1-t)}\right).$$

\bigskip
\bigskip

\centerline{{\includegraphics[scale=0.5]{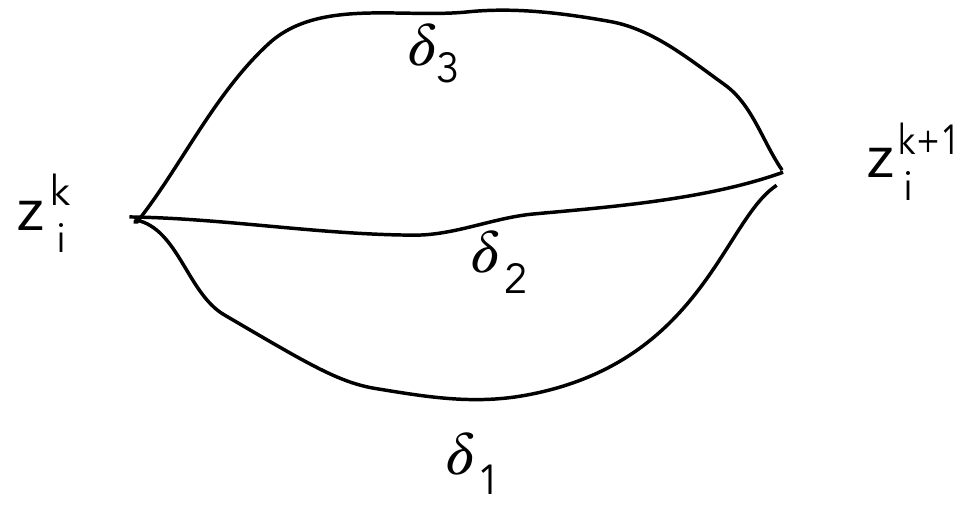}}}

\bigskip

In the case where there exists $j\in\{1,2,3\}$ such that $\delta_j\cap\mathrm{fix}(f)=\emptyset$, one can find an open disk $V_j^k\subset U_j^k$, close enough to $\delta_j$ to satisfy $V_j^k\cap\mathrm{fix}(f)=\emptyset$. In the case where every $\delta_j$ meets $\mathrm{fix}(f)$, define $\delta'_j=\delta_j{}_{\vert [0,t_j)}$, where $$t_j=\min\{t\in[0,1]\,\vert \,\delta_j(t)\in \mathrm{fix}(f)\}.$$

The fact that $f$ preserves the orientation implies that the cyclic order of the germs of $\delta_1$, $\delta_2$ and $\delta_3$ at $z_i^k$ coincides with the cyclic order of the germs of $f(\delta_1)$, $f(\delta_2)$ and $f(\delta_3)$ at $z_i^{k+1}$. This implies that there exist $j_1$ and $j_2$ in $\{1,2,3\}$ such that $f(\delta'_{j_1})\cap \delta'_{j_2} \not=\emptyset.$ Write  $$t'_{j_1}=\min\{t\in[0,t_{j_1})\,\vert \,f(\delta_j(t))\in \delta'_{j_2}\}$$ and define $t'_{j_2}\in [0, t_{j_2})$ such that $\delta_{j_2}(t'_{j_2})= f(\delta_{j_1}(t'_{j_1}))$. One can find an open disk $V_j^k\subset U_j^k\cup f(U_j^k)$, close enough to the segment $\delta_{j_2}([0, t'_{j_2}])\cup f(\delta_{j_1}([0, t'_{j_1}])$, to satisfy $V_j^k\cap\mathrm{fix}(f)=\emptyset$.

\bigskip

\centerline{{\includegraphics[scale=0.5]{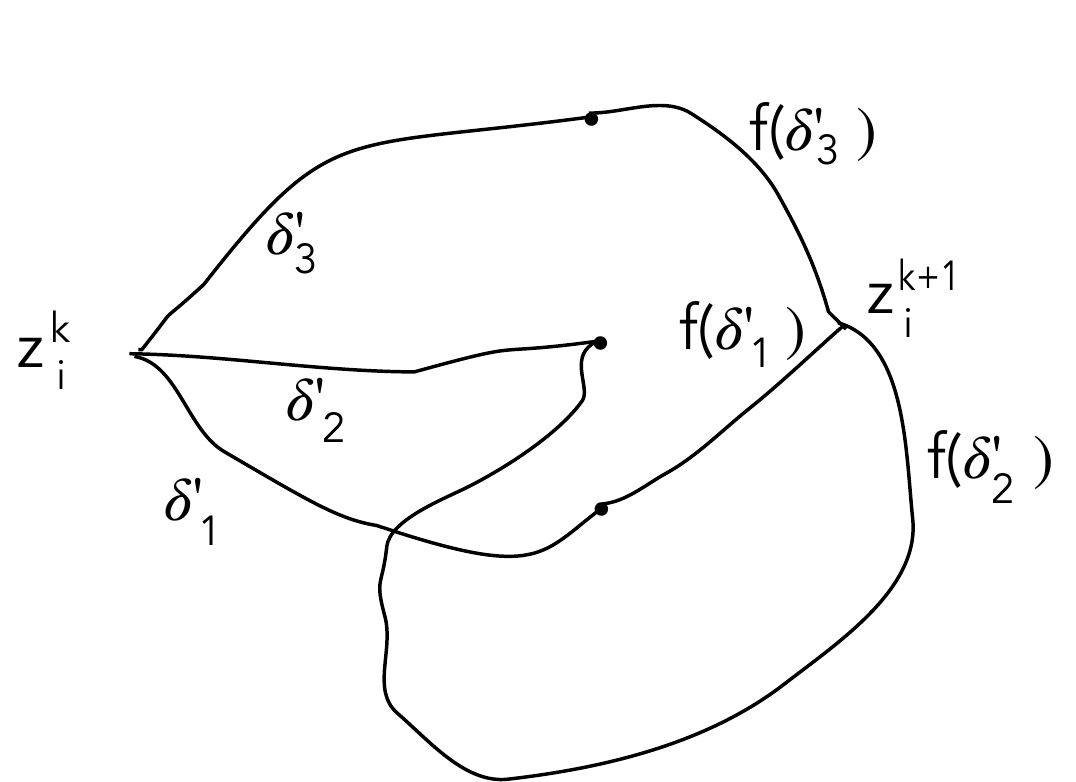}}}

\bigskip

The fact that $f$ extends to a homeomorphism of $\D\cup_{i\in\Z/p\Z} \{\alpha_i, \omega_i\}$ implies that
\begin{itemize}

\item $\lim_{k\to -\infty} f(U_i^k) =\alpha_i$,

\item $\lim_{k\to +\infty}  f(U_i^k) =\omega_i$;

\end{itemize}
and we deduce that

\begin{itemize}

\item $\lim_{k\to -\infty} V_i^k =\alpha_i$,

\item $\lim_{k\to +\infty} V_i^k =\omega_i$.

\end{itemize}

\end{proof}

Now, state the result which will be fundamental in our proof:

\begin{lemma} \label{lemma:new}There exists a sequence $(\gamma_i^k)_{k\in\Z}$ of translation arcs such that:

\begin{enumerate}

\item $\gamma_i^k$ joins $z_i^k$ to $z_i^{k+1}$;

\item $\gamma_i^{k-1}\cap \gamma_i^{k}$ coincide in a neighborhood of $z_i^k$;

\item $\lim_{k\to -\infty} \gamma_i^k =\alpha_i$;

\item $\lim_{k\to +\infty} \gamma_i^k =\omega_i$.
\end{enumerate}

\end {lemma}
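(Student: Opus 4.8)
The goal is to upgrade the open disks $V_i^k$ of Lemma \ref{lemma:withnofixedpoints} into translation arcs $\gamma_i^k$ joining $z_i^k$ to $z_i^{k+1}$, with the extra compatibility requirement (2) that consecutive arcs agree near their common endpoint, and with the same ``condition at infinity'' (3), (4). First I would recall the standard construction of a translation arc through a prescribed non-fixed point (promised in this section): starting from an arbitrary arc from $z$ to $f(z)$ inside a free disk, one removes a final sub-arc at the first self-intersection with its image, obtaining $\gamma$ with $\gamma\cap f(\gamma)\subset\{z,f(z)\}$; since $f$ is not recurrent, Proposition \ref{prop:brouwer} forbids $f^k(\gamma)\cap\gamma\neq\emptyset$ for $k>1$ and also forbids $f(z)\in\gamma\cap f(\gamma)$ being the only intersection point accompanied by a period-$2$ orbit — so in fact $\gamma\cap f(\gamma)=\{z\}$ (otherwise $f$ would be recurrent). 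So a translation arc exists inside any prescribed free disk; applying this inside $V_i^k$ gives an arc $\gamma_i^k$ from $z_i^k$ to $z_i^{k+1}$ contained in $V_i^k$, and properties (3), (4) are then inherited verbatim from properties (3), (4) of Lemma \ref{lemma:withnofixedpoints}. Property (1) is immediate.

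\textbf{The compatibility condition (2).} The real content is arranging that $\gamma_i^{k-1}$ and $\gamma_i^k$ coincide near $z_i^k$ simultaneously for all $k$. I would do this by first fixing, for each $i$ and each $k$, a small embedded germ of arc $\sigma_i^k$ at $z_i^k$, so small that $f(\sigma_i^k)$ is a germ at $z_i^{k+1}$ disjoint from $\sigma_i^{k+1}$ except at $z_i^{k+1}$, and so that the germs are disjoint from $\mathrm{fix}(f)$ and from each other for the finitely many relevant indices; one can even take $\sigma_i^k=f(\sigma_i^{k-1})$ as the terminal germ, so that the germ of $\gamma_i^k$ at $z_i^{k+1}$ is exactly the image under $f$ of its germ at $z_i^k$. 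Concretely, pick one germ $\sigma_i$ at each $z_i=z_i^0$ and propagate: the initial germ of $\gamma_i^k$ at $z_i^k$ is $f^k(\sigma_i)$, and its terminal germ at $z_i^{k+1}$ is $f^{k+1}(\sigma_i)$, which is exactly the initial germ of $\gamma_i^{k+1}$ — giving (2). Then I would build $\gamma_i^k$ inside $V_i^k$ so that it starts along $f^k(\sigma_i)$ and ends along $f^{k+1}(\sigma_i)$: take any arc in $V_i^k$ from $z_i^k$ to $z_i^{k+1}$ with these prescribed germs at the two ends (possible since $V_i^k$ is an open disk and the two germs are disjoint), then run the translation-arc cleanup (cutting at the first intersection of the arc with its image). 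The cleanup only shortens the arc from the $f(z)$-end, i.e. it may eat into the image-germ $f(f^k(\sigma_i))$ but that is a germ at $f(z_i^{k+1})=z_i^{k+2}$, not at $z_i^{k+1}$; so both prescribed germs survive. Since $f$ is non-recurrent, Proposition \ref{prop:brouwer} again forces $\gamma_i^k\cap f(\gamma_i^k)=\{z_i^k\}$ and $\gamma_i^k\cap f^m(\gamma_i^k)=\emptyset$ for $m>1$, so $\gamma_i^k$ is a genuine translation arc.

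\textbf{Main obstacle.} The delicate point is not any single arc but the bookkeeping that makes the cleanup compatible with the germ prescription at \emph{both} ends uniformly in $k$: one must ensure that cutting $\gamma_i^k$ at its first self-image-intersection never destroys the agreed germ $f^{k+1}(\sigma_i)$ at $z_i^{k+1}$, and that the germs $f^k(\sigma_i)$ chosen at $z_i^k$ are small enough that $f^k(\sigma_i)$ and $f^{k+1}(\sigma_i)$ are disjoint apart from sharing nothing (they live at different points $z_i^k\neq z_i^{k+1}$, which holds because no $z_i$ is periodic — if some $z_i$ were periodic we would be done by Proposition \ref{prop:brouwer} directly). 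For the infinitely many indices $k$ with $|k|$ large this is automatic because $\lim_{k\to\pm\infty}V_i^k$ is a point of $\S$, so everything is uniformly small; only finitely many pairs $(i,k)$ require care, and there the germs can be chosen by hand. I would also note at the start that the non-recurrence hypothesis is used precisely to invoke Proposition \ref{prop:brouwer} and conclude that each constructed arc is an honest translation arc — this is the only place it enters this lemma.
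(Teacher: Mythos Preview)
There is a genuine gap in your germ prescription. You propose that $\gamma_i^k$ start along the germ $f^k(\sigma_i)$ at $z_i^k$ and end along the germ $f^{k+1}(\sigma_i)$ at $z_i^{k+1}$. But then the initial germ of $f(\gamma_i^k)$ at $f(z_i^k)=z_i^{k+1}$ is $f\bigl(f^k(\sigma_i)\bigr) = f^{k+1}(\sigma_i)$, which is precisely the terminal germ of $\gamma_i^k$ there. Thus $\gamma_i^k$ and $f(\gamma_i^k)$ agree on an entire arc near $z_i^{k+1}$, so $\gamma_i^k \cap f(\gamma_i^k)$ strictly contains $\{z_i^{k+1}\}$ and $\gamma_i^k$ can never be a translation arc; no cleanup can repair this while preserving both prescribed germs. (Your remark that $f(f^k(\sigma_i))$ is a germ at $z_i^{k+2}$ is an indexing slip: $\sigma_i$ is a germ at $z_i^0$, so $f^{k+1}(\sigma_i)$ sits at $z_i^{k+1}$.) A secondary issue: $V_i^k$ contains both $z_i^k$ and $f(z_i^k)$, hence is not a free disk, and ``removing a final sub-arc at the first self-intersection with its image'' does not in general yield an arc still ending at $f(z_i^k)$.

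The paper resolves this by using \emph{two} distinct germs at each $z_i^k$. Inside $V_i^k$ it finds a closed disk $D_*$ about $z_i^k$ with free interior and $\partial D_*\cap f(\partial D_*)\neq\emptyset$; choosing $y_i^k$ in that intersection, it takes arcs $\beta_i^k$ (from $f^{-1}(y_i^k)$ to $z_i^k$) and $\delta_i^k$ (from $z_i^k$ to $y_i^k$) with interiors in $\mathrm{int}(D_*)$. Then $\sigma_i^k=\beta_i^k\delta_i^k$ is a translation arc, and $\gamma_i^k=\delta_i^k\, f(\beta_i^k)\subset V_i^k\cup f(V_i^k)$ is the desired translation arc from $z_i^k$ to $z_i^{k+1}$. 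Its initial germ at $z_i^k$ is that of $\delta_i^k$ while its terminal germ at $z_i^{k+1}$ is $f(\text{germ of }\beta_i^k)$; since $\beta_i^k$ and $\delta_i^k$ have distinct germs at $z_i^k$, the translation-arc condition survives. Condition~(2) is then arranged by first choosing the $\beta_i^k$ inductively so that $f(\beta_i^k)$ and $\beta_i^{k+1}$ meet only at $z_i^{k+1}$ near that point, and afterwards choosing each $\delta_i^k$ to agree with $f(\beta_i^{k-1})$ near $z_i^k$.
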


\begin{proof} Let $(V_i^k)_{k\in\Z}$ be a sequence of open disks given by Lemma \ref{lemma:withnofixedpoints}. Fix $i\in\Z/p\Z$ and $k\in\Z$ and choose a homeomorphism $h: \D\to V_i^k$ such that $h(0)=z_i^k$.

For every $r<1$, write $D_r=h(\{z\in \D\,\vert \, \vert z\vert \leq r\})$. Then define  $D_* =D_{r_*}$, where $$r_*=\sup\{r>0\,\vert\, f( D_r)\cap  D_r=\emptyset\}.$$ In particular we have 
$$f(\mathrm{int}( D_*))\cap \mathrm{int}( D_*)=\emptyset \enskip \mathrm{and} \enskip f(\partial {D_*})\cap \partial {D_*}\not=\emptyset.$$ Now, fix a point $y_i^k\in f(\partial {D_*})\cap \partial {D_*}$. It is not fixed because $D_*$ is included in $V_i^k$.  So, one can  construct a segment $\sigma_i^k=\beta_i^k\delta_i^k$ joining $f^{-1}(y_i^k)$ to $y_i^k$, where:

\begin{itemize}
\item $\beta_i^k$ joins $f^{-1}(y_i^k)$ to  $z_i^k$  and $\delta_i^k$ joins $z_i^k$ to $y_i^k$;
\item   $\beta_i^k\setminus \{f^{-1}(y_i^k)\}$ and $\delta_i^k\setminus \{y_i^k\}$ are included in $\mathrm{int}( D_*)$.
\end{itemize}

We have $f^{-1}(y_i^k)\not = f(y_i^k)$ because $f$ is not recurrent and so $f(\sigma_i^k)\cap \sigma_i^k $ is reduced to $y_i^k$. Consequently, $\sigma_i^k$ is a translation arc. As explained in the previous section, the segments $f^r(\sigma_i^k)$  and $f^{r'}(\sigma_i^k)$ are disjoint if $\vert r-r'\vert >1$. In particular $\sigma_i^k \,f(\sigma_i^k)\,f^2(\sigma_i^k)$ is a segment. But this implies that $\gamma_i^k=\delta_i^k\, f(\beta_i^k)$ is a translation arc joining $z_i^k$ to $z_i^{k+1}$ that is contained in $D_*\cup f(D_*)$ and so is contained in $V_i^k\cup f(V_i^k)$. The fact that $f$ extends to a homeomorphism of $\D\setminus\{\alpha_i, \omega_i\}$ implies that
\begin{itemize}

\item $\lim_{k\to -\infty} f(V_i^k) =\alpha_i$,

\item $\lim_{k\to +\infty}  f(V_i^k) =\omega_i$;

\end{itemize}
and we deduce that
\begin{itemize}

\item $\lim_{k\to -\infty} \gamma_i^k =\alpha_i$;

\item $\lim_{k\to +\infty} \gamma_i^k =\omega_i$.

\end{itemize}

\bigskip

\centerline{{\includegraphics[scale=0.5]{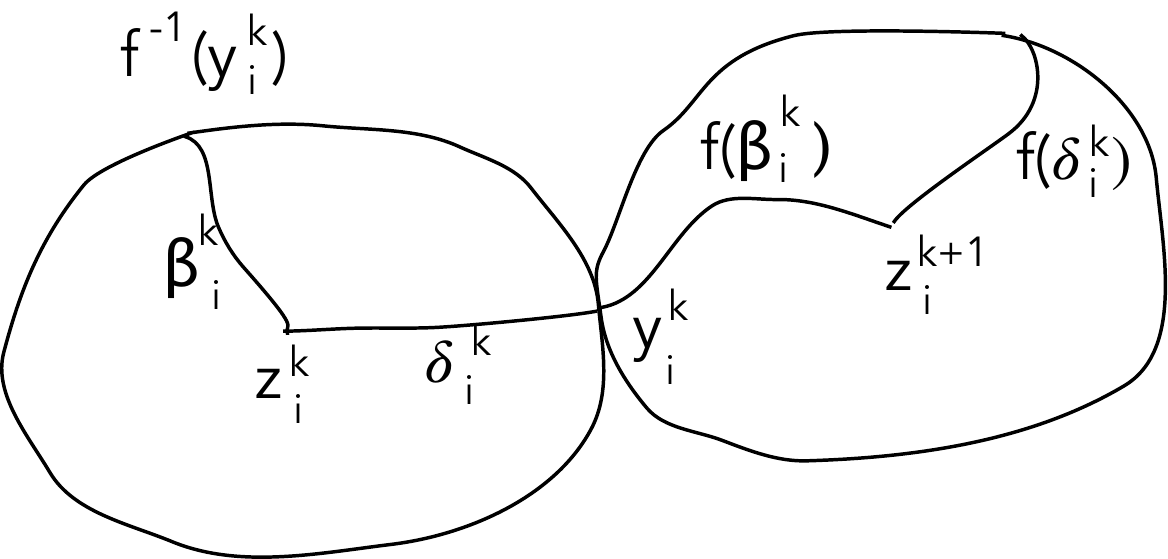}}}

\bigskip

To finish the proof it remains to explain why the previous construction can be done in such a way that  $\gamma_i^{k-1}$ and $\gamma_i^{k}$ coincide in a neighborhood of  $z_i^k$, which means that $f(\beta_i^{k-1})$ and $ \delta_i^{k}$ coincide in a neighborhood of $z_i^k$. The reason is that we have a lot of freedom when choosing the $\beta_i^k$ and the $\delta_i^k$. While choosing our paths $\beta_i^k$, $k\geq 0$, we can always suppose by an induction process that for every $k\geq 0$, the paths $f(\beta_i^k)$ and $\beta_i^{k+1}$ intersect only at $z_i^k$  in a neighborhood of this point. Then, again by induction, while constructing our paths $\beta_i^k$, $k<0$, one can suppose that the previous property is true for every $k\in\Z$. Finally, while constructing our paths $\delta_i^k$, $k\in\Z$,  there is no more obstruction to suppose  that $f(\beta_i^{k-1})$ and $ \delta_i^{k}$ coincide in a neighborhood of $z_i^k$. \end{proof}

\section{An adapted brick decomposition} \label{section:adapted}

Here again, we suppose that $f:\D\to\D$ satisfies the hypothesis of Theorem \ref{th:principal} and is not recurrent. We consider, for every $i\in\Z/p\Z$,  a sequence $(\gamma_i^k)_{k\in\Z}$ of segments satisfying the hypothesis of  Lemma  \ref{lemma:new}.

\subsection{Construction of critical graphs} Each $\gamma_i^k$ is a translation arc and so the path obtained by concatenation of all $f^{k'}(\gamma_i^k)$, $k'\in\Z$, is a simple path. In particular,  $z_i^{k'}\not\in \gamma_i^k$ if $k'\not\in\{k,k+1\}$. We parametrize every $\gamma_i^k$ on $[0,1]$. The fact that   $\lim_{k\to -\infty} \gamma_i^k =\alpha_i$  and $\lim_{k\to +\infty} \gamma_i^k =\omega_i$ implies that one can define 
$$k_0=\min\{k'\leq 0\,\vert\, \gamma_i^{k'}\cap \bigcup_{k>0}\gamma_i^{k}\not=\emptyset\}$$
and 
$$k_1=\max\{k>0\,\vert\, \gamma_i^{k}\cap \gamma_i^{k_0}\not=\emptyset\}.$$
We define
$$t_0=\min\{t\in[0,1]\,\vert\, \gamma_i^{k_0}(t)\in\gamma_i^{k_1}\}$$
and then $t_1\in[0,1]$ such that $\gamma_i^{k_0}(t_0)=\gamma_i^{k_1}(t_1)$, noting that 
$$t_1=\max\{t\in [0,1]\,\vert\, \gamma_i^{k_1}(t)\in  \gamma_i^{k_{0}}{}_{\vert [0,t_0]}\}.$$

The fact that $z_i^{k'}\not\in  \bigcup_{k>0}\gamma_i^{k}$ if $k'\leq 0$ implies that  $t_0\not=0$, and moreover that $t_0\not=1$ if $k_0<0$. For the same reasons, $t_0\not=1$, if $k_0=0$ and $k_1>1$. Finally $t_0\not=1$, if $k_0=0$ and $k_1=1$, because $\gamma_i^{0}$ and $ \gamma_i^{1}$ coincide in a neighborhood of $z_i^1$. Consequently, in every situation, it holds that $t_0\in(0,1)$. We deduce that $t_1\in(0,1)$ because $\gamma_i(t_0)$ does not belong to the orbit of $z_i$.

We define inductively an increasing sequence $(k_l)_{l>0}$ of positive integers and a sequence $(t_l)_{l>0}$ by the relations:
$$k_{l+1}=\max\{k>k_l\,\vert\, \gamma_i^{k}\cap\gamma_i^{k_l}{}_{\vert [t_l,1]}\not=\emptyset\},$$
$$t_{l+1}=\max\{t\in [0,1]\,\vert\, \gamma_i^{k_{l+1}}(t)\in  \gamma_i^{k_{l}}{}_{\vert  [t_l,1]}\}.$$

Of course, $\gamma_i^{k_{l}}(t_{l})\not=\gamma_i^{k_{l-1}}(t_{l-1})$ because $\gamma_i^{k_{l-1}}(t_{l-1})\not\in \gamma_i^{k_{l}}$. Note also that $t_l\not=0$ and more generally than $\gamma_i^{k_l}(t_l)$ is not in the orbit of $z_i$  because $z_i^{k_l}$ and $z_i^{k_l+1}$ are the only point of the orbit of $z_i$ that are contained in $\gamma_i^{k_l}$ and because  $\gamma_{i}^{k_{l}}$ and $\gamma_i^{k_l-1}$ coincide in a neighborhood of $z_i^{k_l}$

Now we define inductively an increasing sequence $(k_l)_{l<0}$ of negative integers and a sequences $(t_l)_{l\leq 0}$  by the relations:
$$k_{l-1}=\min\{k<k_l\,\vert\, \gamma_i^{k}\cap\gamma_i^{k_l}{}_{\vert [0,t_l]}\not=\emptyset\}$$
$$t_{l-1}=\min\{t\in [0,1]\,\vert\, \gamma_i^{k_{l-1}}(t)\in  \gamma_i^{k_{l}}{}_{\vert[0,t_l]}\},$$ 
Similarly, we prove that $t_l\in(0,1)$ and that $\gamma_i^{k_{l}}(t_{l})$ is not an end of $\gamma_i^{k_{l+1}}{}_{\vert[0,t_{l+1}]}$. 

Now we define a family of segments $(\gamma'{}_i^l)_{l\in\Z\setminus\{0\}}$ by the relations:
$$\gamma'{}_i^l=\begin{cases}  
 \gamma_i^{k_l}{}_{\vert [t_l,1] }&  \mathrm{if} \enskip  l>0\\
  \gamma_i^{k_l}{}_{\vert [0,t_l]} & \mathrm{if} \enskip l\leq 0 \\
\end{cases} $$

\bigskip

\centerline{{\includegraphics[scale=0.5]{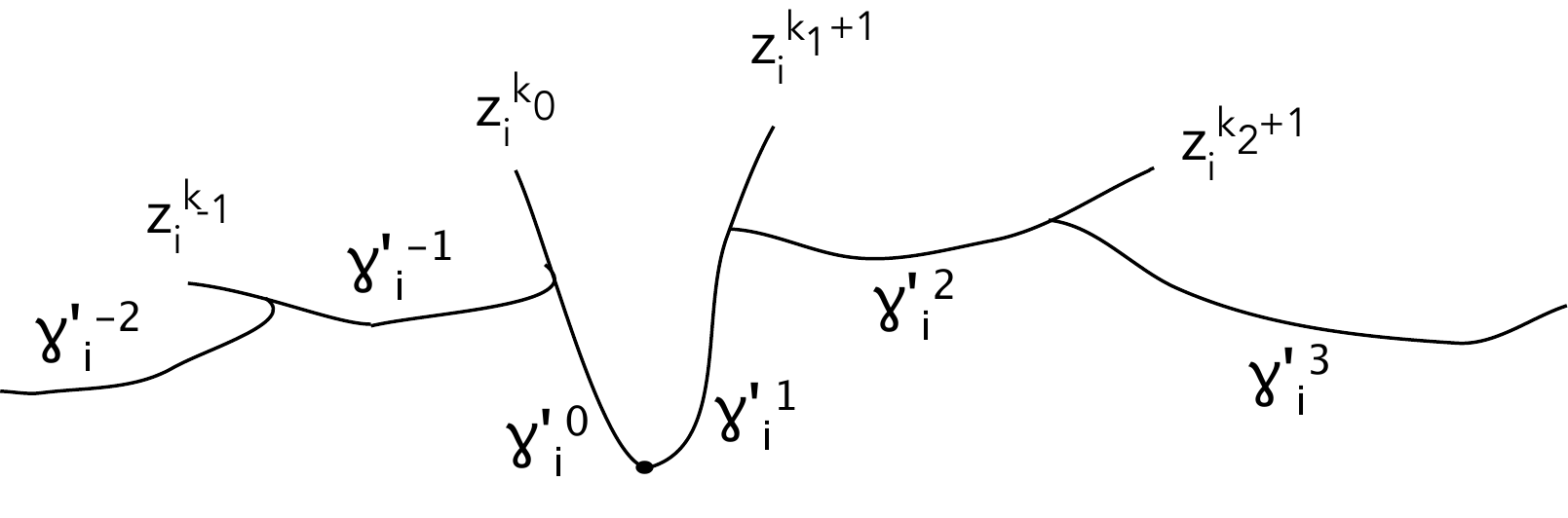}}}

\bigskip

These segments are free and have disjoint interiors. Moreover, if $ l'>l$, there exists $k>0$ such that $f^{k} (\gamma'{}_i^l)\cap \gamma'{}_i^{l'} \not=\emptyset$ because $\gamma'{}^l_i$ contains $z_i^{k_l+1}$ if $l>0$, and contains $z_i^{k_l}$ if $l\leq 0$. Consequently, by Proposition \ref{prop:guillouleroux}, for every $k>0$, one has 
 $f^{k} (\gamma'{}_i^{l'})\cap \gamma'{}_i^{l} \not=\emptyset.$ Note also that
\begin{itemize}

\item $\lim_{l\to -\infty} \gamma'{}_i^l =\alpha_i$;

\item $\lim_{l\to +\infty} \gamma'{}_i^l =\omega_i$.

\end{itemize}
In particular, there exists $L$ such that:
\begin{itemize}
\item $\gamma'_i{}^l\cap \gamma'_{i}{}^{l'}=\emptyset$ if $l<-L$ and $l'> L$;
\item  $\gamma'_i{}^l\cap \gamma'_{i'}{}^{l'}=\emptyset$  if $i\not=i'$, $\vert l\vert >L$ and $\vert l'\vert >L$. 
\end{itemize}

\subsection{Construction of an adapted brick decomposition} \label{ssection:adapted}We can find a free brick decomposition ${\mathcal D}'=(B',E',V')$ of $f$ defined on $\D\setminus\mathrm{fix}(f)$ and a family of bricks $(b'_i{}^m)_{m\in\Z\setminus \{0\}, \,i\in \Z/p\Z}$ in $B'$ such that $b'{}_i^m$ contains $\gamma'_i{}^{m+L}$ if $m>0$ and contains $\gamma'_i{}^{m-L}$ if $m<0$. To construct this decomposition we begin to enlarge the paths $\gamma'{}_i^l$, $\vert l\vert >L$, to construct the bricks $b'_i{}^m$ and then we decompose the complement of the union of these bricks in free bricks. Then we consider a maximal free disk decomposition ${\mathcal D}=(B,E,V)$ which is a sub-decomposition of ${\mathcal D}'$ and we denote $b_i^m$ the brick that contains $b'_i{}^m$. Note that for every $i\in\Z/p\Z$,  and for every $m$, $m'$ in $\Z\setminus\{0\}$ such that $m<m'$, there exists $k>0$ such that  $f^k(b_i^m)\cap b_i^{m'}\not=\emptyset$. In particular we have $b_i^m<b_i^{m'}$. Note also that for every $m>0$, the bricks $b_i^m$ and $b_i^{m+1}$ are adjacent. Consequently every brick $b_i^m$, $m>1$, is regular and $b_i^1$ is positively regular. Shifting the indexation if necessary (and so starting the sequence with $b_i^2$ instead of $b_i^1$) one can suppose that every $b_i^m$, $m>0$, is regular. Similarly, one can suppose that every  every $b_i^m$, $m<0$.

\bigskip
\bigskip

\centerline{{\includegraphics[scale=0.5]{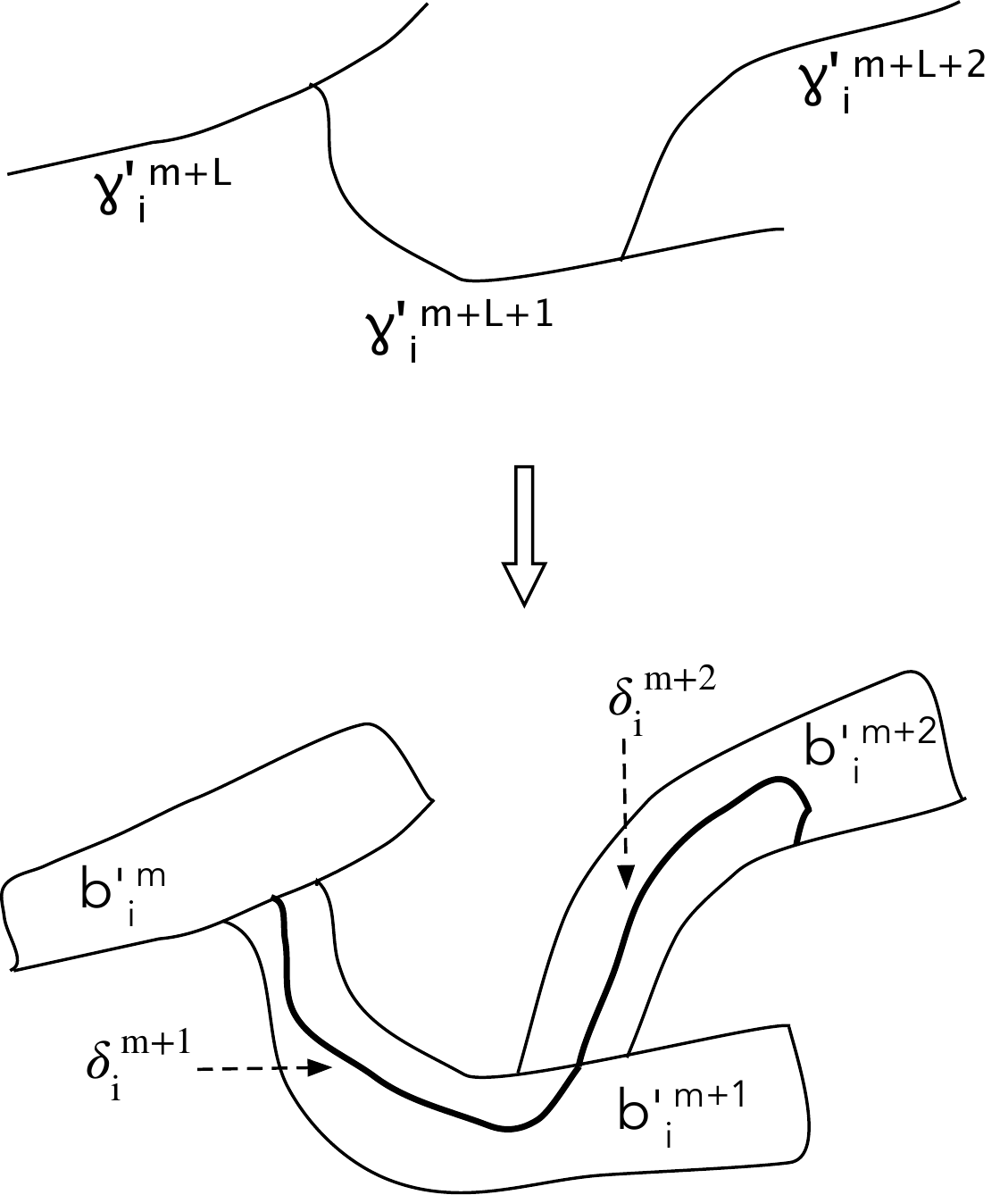}}}

\bigskip
\bigskip

To conclude, note that we can find a family of segments $(\delta_i^m)_{m\in\Z\setminus \{0\}, \,i\in \Z/p\Z}$ satisfying:
\begin{itemize}
\item $\partial \delta_i^m\subset \partial b'{}_i^m$;
\item $\mathrm{int}(\delta_i^m)\subset \mathrm{int}(b'{}_i^m)$;
\item if $m>0$, one obtains a half line $\lambda_i^{m}$ by concatenation of the $\delta_i^{m'}$, $m'\geq m$,  such that
\begin{itemize}
\item $\partial \lambda_i^m\subset \partial \left(\bigcup_{m'\geq m}b_i^{m'}\right)$;
\item $\mathrm{int}(\lambda_i^m)\subset \mathrm{int}\left(\bigcup_{m'\geq m}b_i^{m'}\right)$;
\item $\lim_{t\to+\infty}\lambda_i^m(t)=\omega_i$ if $\lambda_i^m$ is parametrized on $[0,+\infty)$
\end{itemize}
\item if $m<0$, then one obtains a half line $\lambda_i^{m}$ by concatenation of the $\delta_i^{m'}$, $m'\leq m$,  such that
\begin{itemize}
\item $\partial \lambda_i^m\subset \partial \left(\bigcup_{m'\leq m}b_i^{m'}\right)$;
\item $\mathrm{int}(\lambda_i^m)\subset \mathrm{int}\left(\bigcup_{m'\leq m}b_i^{m'}\right)$;
\item $\lim_{t\to-\infty}\lambda_i^m(t)=\alpha_i$ if $\lambda_i^m$ is parametrized on $(-\infty, 0]$
\end{itemize}
\end{itemize}

\section{Proof of Theorem  \ref{th:principal} }

 We consider the brick decomposition ${\mathcal D}=(B,E,V)$ built in the previous section and keep the notations introduced there. In this section, when we will talk about a component of a set $X\in{\mathcal P}(B)$, we will mean either a connected component of $X$ or the empty set. We will write $\Z^*=\Z\setminus\{0\}$.
 
\begin{lemma} \label{lemma:ends} Let $(R,A)$ be a cut of $\leq$. For every  $i\in\Z/p\Z$, there is a cut $(Z_i^-,Z_i^+)$ of $\Z^*$, a component $R_i$ of $R$ and a component $A_i$ of $A$ such that:
\begin{itemize}
\item $R_i$ contains  every $b_i^m$, $m\in Z_i^-$, and is empty if $Z_i^-$ is empty; 
\item $A_i$ contains  every $b_i^m$, $m\in Z_i^+$, and is empty if $Z_i^+$ is empty; 
\item  $R_i$ and $A_i$ are adjacent if $Z_i^-$ and $Z_i^+$ are not empty.
\end{itemize}
\end{lemma}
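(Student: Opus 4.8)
The plan is to argue, for each fixed $i \in \Z/p\Z$, about the way the chain $(b_i^m)_{m \in \Z^*}$ sits across the cut $(R,A)$. Recall from Section 6 that the bricks $b_i^m$ satisfy $b_i^m < b_i^{m'}$ whenever $m < m'$, and that consecutive bricks $b_i^m, b_i^{m+1}$ are adjacent (for $m > 0$ and for $m < 0$; the possible gap between $b_i^{-1}$ and $b_i^{1}$ is harmless). Since $(R,A)$ is a cut, the set $Z_i^- = \{m \in \Z^* \mid b_i^m \in R\}$ and its complement $Z_i^+ = \{m \in \Z^* \mid b_i^m \in A\}$ form a cut of $\Z^*$: indeed if $b_i^m \in A$ and $m' > m$ then $b_i^{m'} \geq b_i^m$ forces $b_i^{m'} \in A$, and dually for $R$. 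This immediately gives the partition $(Z_i^-, Z_i^+)$; the content is the connectedness statements.

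**The components $R_i$ and $A_i$.**
Next I would define $R_i$ to be the component of $R$ containing $b_i^{m}$ for the largest $m \in Z_i^-$ (if $Z_i^- \neq \emptyset$), and $\emptyset$ otherwise; similarly $A_i$ is the component of $A$ containing $b_i^m$ for the smallest $m \in Z_i^+$. The key point is that \emph{all} the $b_i^m$ with $m \in Z_i^-$ lie in this single component $R_i$: because $Z_i^-$ is a down-set of $\Z^*$ of the form $\{m : m < m_0\}$ or $\{m : m \le m_0\}$ (with $m_0 < 0$ possibly, or running through $0$), the bricks $b_i^m$, $m \in Z_i^-$, form a chain of pairwise adjacent bricks — each adjacent to the next — hence their union is connected and contained in one component of $R$. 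The only subtlety is the jump over the index $0$: if $Z_i^-$ contains both negative and positive indices, we need $b_i^{-1}$ and $b_i^{1}$ to lie in the same component of $R$, which follows because by construction (Section 6) there is $k > 0$ with $f^k(b_i^{-1}) \cap b_i^1 \neq \emptyset$, so $b_i^{-1} \le b_i^1$, and using the regular-component machinery (Proposition \ref{proposition:components}) one concludes adjacency, or one simply notes that whichever component of $R$ contains $b_i^1$ must also contain all smaller bricks in $R$ adjacent down the chain. Symmetrically all $b_i^m$, $m \in Z_i^+$, lie in $A_i$.

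**Adjacency of $R_i$ and $A_i$.**
The last bullet is the heart of the lemma. Assume $Z_i^-$ and $Z_i^+$ are both nonempty; write $Z_i^- = \{\dots\} \ni m_0$ with $m_0$ its maximum and $m_0 + 1 \in Z_i^+$ its minimum (after possibly absorbing the index shift; the case where the cut of $\Z^*$ falls exactly between $-1$ and $1$ is treated the same way since $b_i^{-1} \le b_i^1$). Then $b_i^{m_0} \in R_i$ and $b_i^{m_0+1} \in A_i$, and these two bricks are adjacent by the construction of the decomposition in Section 6. An edge shared by $b_i^{m_0}$ and $b_i^{m_0+1}$ lies in $\partial R_i \cap \partial A_i$, so $R_i$ and $A_i$ are adjacent. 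I expect the main obstacle to be purely bookkeeping: handling uniformly the boundary index $0$ (where the chain $(b_i^m)$ has its built-in gap) and making sure the chosen components really do absorb every $b_i^m$ on their side of the cut rather than splitting into several components. This is resolved by the observation that each $b_i^m$ with $m > 0$ is regular and adjacent to $b_i^{m+1}$, so no splitting can occur, together with the comparability $b_i^{-1} \le b_i^1$ to bridge across $0$.
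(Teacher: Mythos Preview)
Your overall structure is right, but there is a genuine gap precisely where you flag the ``index $0$'' subtlety, and your proposed patch does not close it.

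For the connectedness of $R_i$: suppose $Z_i^-$ contains both negative and positive indices. You need $b_i^{-1}$ and $b_i^{1}$ to lie in the same component of $R$. You invoke Proposition~\ref{proposition:components}, but that proposition concerns a regular component of $R$ and a regular component of $A$; it says nothing about two bricks both lying in $R$. Your alternative (``whichever component of $R$ contains $b_i^1$ must also contain all smaller bricks in $R$ adjacent down the chain'') does not help either, since the adjacency chain for positive indices stops at $b_i^1$ and never reaches $b_i^{-1}$. The paper's argument avoids this entirely and is worth internalizing: since every $b_i^m$ is \emph{regular}, the component of $R$ containing any $b_i^m\in R$ has at least two bricks, hence is a regular component of the repeller $R$, hence is itself a repeller, hence is downward closed for $\leq$. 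Thus it contains every $b_i^{m'}$ with $m'\leq m$, and the gap at $0$ disappears automatically. No adjacency of consecutive $b_i^m$ is needed at all.

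For the adjacency of $R_i$ and $A_i$: your direct-adjacency argument again breaks when the cut of $\Z^*$ falls between $-1$ and $1$, and writing ``$b_i^{-1}\le b_i^1$'' is not by itself adjacency of the components. Here Proposition~\ref{proposition:components} \emph{does} apply (one component in $R$, one in $A$, both regular, and $f^k(b_i^{-1})\cap b_i^{1}\neq\emptyset$ for some $k>0$), and this is exactly how the paper handles the third bullet uniformly, without ever appealing to adjacency of consecutive $b_i^m$.
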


\begin{proof} Let $Z_i^-$ be the set of $m\in\Z^*$ such that $b_i^m\in R$ and $Z_i^+$ the set of $m\in\Z^*$ such that $b_i^m\in A$. The couple $(Z_i^-,Z_i^+)$ is a cut of  $\Z^*$ because $b_i^m\leq b_i^{m'}$ if $m\leq m'$. Every brick $b_i^m$, $m\in\Z^*$, is regular. So, for every $m\in Z_i^-$, the component of $R$ that contains $b_i^m$ is itself a repeller and contains every $b_i^{m'}$, $m'\leq m$. Similarly,  if $m\in Z_i^+$, the  component $A'$ of $A$ that contains $b_i^m$ is an attractor that contains every $b_i^{m'}$, $m'\geq m$. So, if $Z_i^-\not=\emptyset$, the bricks $b_i^m$, $m\in Z_i^-$, belong to the same connected component $R_i$ of $R$. Similarly, if $Z_i^+\not=\emptyset$, the bricks $b_i^m$, $m\in Z_i^+$, belong to the same connected component $A_i$ of $A$.   Defining $R_i=\emptyset$  if $Z_i^-=\emptyset$ and  $A_i=\emptyset$  if $Z_i^+=\emptyset$, we get the two first assertions. To get the third one,  use Proposition \ref{proposition:components}  and the fact that if $m<m'$, there exists $k>0$ such that $f^k(b_i^m)\cap b_i^{m'}\not=\emptyset$. \end{proof}

Say that $X\in{\mathcal P}(B)$ {\it contains} $\alpha_i$ if there exists $m<0$ such that $X$ contains every $b_i^{m'}$, $m'\leq m$ and that  $X$ {\it contains} $\omega_i$ if there exists $m>0$ such that $X$ contains every $b_i^{m'}$, $m'\geq m$.  

\begin{lemma} \label{lemma:transverse ends} Suppose that $\eta$, $\zeta$, $\eta'$, $\zeta'$, are four different points of
$$\{\alpha_i\, \vert\, i\in\Z/p\Z\}\cup \{\omega_i\, \vert\, i\in\Z/p\Z\},$$
such that $\eta'\in(\eta,\zeta)$ and $\zeta'\in(\zeta,\eta)$. If $X\in{\mathcal P}(B)$ is connected and contains $\eta$ and $\zeta$ and if $X'\in{\mathcal P}(B)$ is connected and contains $\eta'$ and $\zeta'$, then $X$ and $X'$ have at least one common brick.
\end{lemma}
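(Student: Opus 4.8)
The plan is to reduce this to a Jordan-curve / separation argument on the sphere $\S^2$, exploiting that the half-lines $\lambda_i^m$ built at the end of Section~\ref{ssection:adapted} give honest lines in $\S^2$ once we attach their limit endpoints among the $\alpha_i,\omega_i$. First I would fix the connected set $X$ containing $\eta$ and $\zeta$. By hypothesis $X$ contains tails $(b_i^{m'})_{m'\le m}$ or $(b_i^{m'})_{m'\ge m}$ of the two brick-sequences whose ends are $\eta$ and $\zeta$; concatenating the corresponding segments $\delta_i^{m'}$ inside these bricks, and joining them by a path through the connected set $X$, produces a segment (or line) $L_X$ that lies in $X$ (as a subset of $\S^2\setminus\mathrm{fix}(f)$) except for its two endpoints, which are $\eta$ and $\zeta$ on the circle $\S=\partial\D$. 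Do the same for $X'$, obtaining $L_{X'}$ with endpoints $\eta'$ and $\zeta'$.

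Next I would invoke the combinatorial hypothesis $\eta'\in(\eta,\zeta)$, $\zeta'\in(\zeta,\eta)$: this says precisely that on the circle $\S$ the four points are in cyclic order $\eta,\eta',\zeta,\zeta'$, so the pair $\{\eta,\zeta\}$ separates $\{\eta',\zeta'\}$ in $\S$. Now close up $L_X$ into a simple loop $\Gamma_X$ in $\overline{\D}$ by adding an arc of $\S$ from $\eta$ to $\zeta$ (either of the two arcs works; pick one). By the Jordan curve theorem $\Gamma_X$ separates the sphere, and because $\eta'$ and $\zeta'$ lie in the two different open arcs of $\S\setminus\{\eta,\zeta\}$, they lie in the two different complementary components of $\Gamma_X$. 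The line $L_{X'}$ joins $\eta'$ to $\zeta'$, hence must meet $\Gamma_X$; since the arc of $\S$ we added is disjoint from $\D$ and $L_{X'}\subset\overline{\D}$ meets $\S$ only at its endpoints $\eta',\zeta'\notin\{\eta,\zeta\}$, the intersection point lies on $L_X\setminus\{\eta,\zeta\}$, i.e.\ in the interior part of $L_{X'}$ meets the interior part of $L_X$. Therefore $X$ and $X'$ meet as subsets of $\S^2\setminus\mathrm{fix}(f)$, and since both are unions of bricks of $\mathcal D$ and bricks are closed with interiors covering the complement of the skeleton, a common point forces a common brick.

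The main obstacle I expect is the bookkeeping needed to upgrade ``$L_{X'}$ meets $\Gamma_X$'' to ``$X$ and $X'$ share a brick'': one must be careful that $L_X$ and $L_{X'}$ can be taken to avoid $\mathrm{fix}(f)$ and to have their non-endpoint parts genuinely inside $X$ resp.\ $X'$ (this is where the half-line properties $\mathrm{int}(\lambda_i^m)\subset\mathrm{int}(\bigcup b_i^{m'})$ and the fact that a path in a connected union of bricks can be pushed into the interiors of those bricks, away from the skeleton, are used), and that the limiting behaviour $\lim\lambda_i^m=\alpha_i$ or $\omega_i$ really makes $L_X,L_{X'}$ into proper lines of $\S^2\setminus\mathrm{fix}(f)$ with the claimed endpoints on $\S$, so that adding a circular arc yields a genuine simple loop rather than something with a wild end. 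Once those topological technicalities are in place, the separation argument itself is immediate from the Jordan curve theorem and the cyclic-order hypothesis.
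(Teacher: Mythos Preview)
Your proposal is correct and follows essentially the same route as the paper. The paper's proof is more compressed: it observes that $\mathrm{int}(X)$ is connected and contains two of the half-lines $\lambda_i^m$, so one obtains a line $\lambda\subset\mathrm{int}(X)$ joining $\eta$ to $\zeta$, and likewise a line $\lambda'\subset\mathrm{int}(X')$ joining $\eta'$ to $\zeta'$; it then simply asserts $\lambda\cap\lambda'\neq\emptyset$ (the separation/Jordan argument you spell out with the circular arc is left implicit), and since the intersection point lies in $\mathrm{int}(X)\cap\mathrm{int}(X')$ it lies in the interior of a brick common to both. Your explicit Jordan-curve closure and your last ``obstacle'' paragraph are exactly the details the paper suppresses; note in particular that placing the lines in $\mathrm{int}(X)$ and $\mathrm{int}(X')$ from the start (rather than just in $X$, $X'$) is what makes ``a common point forces a common brick'' immediate.
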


\begin{proof}: Recall that we have defined half lines $\lambda_i^m$, $i\in\Z/p\Z$, $m\in\Z^*$, at the end of Section \ref{ssection:adapted}. The set $\mathrm{int}(X)$ is connected and contains two such half lines, one converging to $\eta$, the other one converging to $\zeta$. So, there is a line $\lambda\subset \mathrm{int}(X)$ that joins $\eta$ and $\zeta$. Similarly, there is a line $\lambda'\subset \mathrm{int}(X')$ that joins $\eta'$ and $\zeta'$. Consequently, one has $\lambda\cap \lambda'\not=\emptyset$, which implies that $X$ and $X'$ intersect in $B$.
\end{proof}

\begin{lemma} \label{lemma:alternative} If $(R,A)$ is a cut of $\leq$, then exactly one of the following situations holds:
\begin{enumerate}
\item there is a  component $R'$ of $R$ that contains  every $\alpha_i$, $i\in\Z/p\Z$;
\item there is a  component $A'$ of $A$ that contains  every $\omega_i$, $i\in\Z/p\Z$.
\end{enumerate}
Moreover, if {\em (1)} holds then, every component of $A$ contains at most one $\omega_i$, $i\in\Z/p\Z$, and if {\em (2)} holds, every component of $R$ contains at most one $\alpha_i$, $i\in\Z/p\Z$. 
\end{lemma}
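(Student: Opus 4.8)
The plan is to exploit the cyclic combinatorial hypothesis on the points $\alpha_i,\omega_i$ together with the topological separation Lemma \ref{lemma:transverse ends}, and the structural Lemma \ref{lemma:ends}. First I would apply Lemma \ref{lemma:ends}: for each $i\in\Z/p\Z$ we obtain a cut $(Z_i^-,Z_i^+)$ of $\Z^*$, a component $R_i$ of $R$ (possibly empty) containing all $b_i^m$ with $m\in Z_i^-$, and a component $A_i$ of $A$ (possibly empty) containing all $b_i^m$ with $m\in Z_i^+$, with $R_i$ and $A_i$ adjacent whenever both are nonempty. In particular, whenever $Z_i^-$ is infinite to the left, $R_i$ \emph{contains} $\alpha_i$ in the sense defined just before the statement, and whenever $Z_i^+$ is infinite to the right, $A_i$ \emph{contains} $\omega_i$; since $(Z_i^-,Z_i^+)$ is a cut of $\Z^*$, for each $i$ at least one of these holds — in fact $R_i$ always contains $\alpha_i$ (as $Z_i^-$ contains all sufficiently negative $m$) or $A_i$ always contains $\omega_i$. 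Wait: more carefully, a cut of $\Z^*$ can have $Z_i^-$ empty or $Z_i^+$ empty but not in a way that omits the tails unless one side is all of $\Z^*$; so either $R_i$ contains $\alpha_i$, or $A_i$ contains $\omega_i$, and both happen unless the cut is trivial. So in every case, for each $i$, the component $R_i$ contains $\alpha_i$ or the component $A_i$ contains $\omega_i$ (and generically both).

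Next I would prove that all the $\alpha_i$ lying in repeller-components are in fact in the \emph{same} component, and likewise for $\omega_i$ in attractor-components — this is where Lemma \ref{lemma:transverse ends} does the work. Suppose $R_i$ contains $\alpha_i$ and $R_j$ contains $\alpha_j$ with $R_i\ne R_j$, and suppose also $A_k$ contains $\omega_k$ and $A_l$ contains $\omega_l$ with $A_k\ne A_l$. Using the combinatorial hypothesis (the unique point in $(\omega_{i-1},\omega_i)$ is $\alpha_{i+1}$), the $2p$ points $\alpha_i,\omega_i$ occur around $\S$ in a fixed cyclic pattern, and one shows that one can always extract four points $\eta,\zeta,\eta',\zeta'$ with $\eta,\zeta$ separating $\eta',\zeta'$ on the circle such that $\eta,\zeta$ are contained (in the sense above) in one of our connected sets and $\eta',\zeta'$ in another. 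Concretely: if two distinct components each ``contain'' one of the endpoint-points, and two other distinct components also each ``contain'' one, the cyclic structure forces a ``linking'' configuration; Lemma \ref{lemma:transverse ends} then says the two connected sets share a brick — impossible if one is a subset of $R$ and the other of $A$ (they are disjoint in $B$), and also impossible if both are distinct components of $R$ (or of $A$). This forces: all the $\alpha_i$ that lie in repeller-components lie in a single component $R'$, OR all the $\omega_i$ that lie in attractor-components lie in a single component $A'$ — and combined with the previous paragraph (every $i$ contributes $\alpha_i$ to some $R_i$ or $\omega_i$ to some $A_i$), a short case analysis yields that either one component $R'$ contains \emph{all} $\alpha_i$, or one component $A'$ contains \emph{all} $\omega_i$.

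For the ``exactly one'' clause, I would argue that (1) and (2) cannot both hold: if $R'$ contains all $\alpha_i$ and $A'$ contains all $\omega_i$, then taking any four distinct points among $\alpha_{i+1},\omega_{i-1},\alpha_{j+1},\omega_{j-1}$ in the linking position guaranteed by the cyclic order (which exists because $p\ge 3$ gives at least three $\alpha$'s and three $\omega$'s alternating appropriately), Lemma \ref{lemma:transverse ends} gives $R'\cap A'\ne\emptyset$ in $B$, contradicting that $R$ and $A$ partition $B$. Finally, for the ``Moreover'' part: assuming (1), suppose some component $A''$ of $A$ contained two points $\omega_i$ and $\omega_j$; since $R'$ contains $\alpha_{i+1}$ and $\alpha_{j+1}$ too, and on the circle $\omega_i,\omega_j$ separate a suitable pair among $\{\alpha_{i+1},\alpha_{j+1}\}$ from the rest — more precisely, between $\omega_i$ and $\omega_j$ (one arc) sits $\alpha_{i+1}$ and on the other arc sits $\alpha_{j+1}$, giving exactly the transverse configuration — so Lemma \ref{lemma:transverse ends} forces $R'\cap A''\ne\emptyset$, again a contradiction. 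The symmetric argument handles the case (2). I expect the main obstacle to be the careful bookkeeping of the cyclic order: extracting, from the hypothesis on which point lies in each $(\omega_{i-1},\omega_i)$, the precise four-point linking configurations needed to invoke Lemma \ref{lemma:transverse ends} in each sub-case, and making sure the ``contains $\alpha_i$''/``contains $\omega_i$'' predicates are matched correctly with the half-lines $\lambda_i^m$ used in that lemma.
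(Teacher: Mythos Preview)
Your proposal has a genuine gap in the central step. To invoke Lemma~\ref{lemma:transverse ends} you need \emph{each} of the two connected sets $X,X'$ to contain \emph{two} of the special points, in interlacing position on $\S$. But the individual components $R_i$ and $A_j$ that you extract from Lemma~\ref{lemma:ends} are only known to contain one special point each ($R_i$ contains $\alpha_i$, $A_i$ contains $\omega_i$, when nonempty). So your sentence ``one shows that one can always extract four points $\eta,\zeta,\eta',\zeta'$ \dots such that $\eta,\zeta$ are contained in one of our connected sets and $\eta',\zeta'$ in another'' is precisely where the argument breaks: you have not exhibited any connected set containing two of the points. In the scenario where the $R_i$ are pairwise distinct and the $A_i$ are pairwise distinct, nothing in your outline produces a contradiction; and your subsequent ``short case analysis'' inherits the same problem.

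The missing idea, which is the heart of the paper's proof, is that the \emph{union} $R_i\cup A_i$ is connected (this is exactly the content of the adjacency clause in Lemma~\ref{lemma:ends}) and contains \emph{both} $\alpha_i$ and $\omega_i$. Applying Lemma~\ref{lemma:transverse ends} to the pair $R_i\cup A_i$ and $R_{i+1}\cup A_{i+1}$, whose four endpoints $\alpha_i,\omega_i,\alpha_{i+1},\omega_{i+1}$ interlace on $\S$ by the cyclic hypothesis, forces them to share a brick, hence $R_i=R_{i+1}$ or $A_i=A_{i+1}$. Once, say, $R_i=R_{i+1}$, this component now contains two $\alpha$'s, and one can apply Lemma~\ref{lemma:transverse ends} again against $R_{i-1}\cup A_{i-1}$ to propagate $R_{i-1}=R_i$, and inductively obtain (1). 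Your handling of the ``Moreover'' clause is essentially correct; note that mutual exclusivity of (1) and (2) then follows immediately from it (a single component cannot contain all $p\geq 3$ of the $\omega_i$ if it contains at most one), so no separate linking argument is needed there.
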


\begin{proof}   By Lemma \ref{lemma:ends}, one knows that for every $i\in\Z/p\Z$, the set $R_i\cup A_i$ is connected and contains $\alpha_i$ and $\omega_i$. Suppose first that there exists $i\in\Z/p\Z$ such that $R_i=R_{i+1}$. Then $R_i$ contains $\alpha_i$ and $\alpha_{i+1}$, while $R_{i-1}\cup A_{i-1}$ contains $\alpha_{i-1}$ and $\omega_{i-1}$. By Lemma \ref{lemma:transverse ends}, it implies that $R_i$ and $R_{i-1}\cup A_{i-1}$ intersect in $B$, which means that $R_{i-1}=R_i$.  By induction one deduces that all $R_i$ are equal and so (1) holds.  Similarly, if $A_i=A_{i+1}$, then $A_{i+1}$ and $R_{i+2}\cup A_{i+2}$ intersect in $B$ and so $A_{i+1}=A_{i+2}$. One deduces that (2) holds. Applying Lemma  \ref{lemma:transverse ends} to $R_i\cup A_i$ and $R_{i+1}\cup A_{i+1}$, one deduces that these two sets intersect in $B$ and so, either $R_i=R_{i+1}$ or $A_i=A_{i+1}$. Consequently (1) or (2) holds. 

Suppose now that (1) holds. Applying again Lemma  \ref{lemma:transverse ends},  and the fact that $A\cap R'=\emptyset$ (in $B$). One deduces that every component of $A$ contains at most one $\omega_i$. Similarly, if (2) holds, every component of $R$ contains at most one $\alpha_i$. In particular (1) and (2) cannot occur simultaneously. \end{proof}

\begin{proof}[Proof of Theorem \ref{th:principal} ]
As explained in section \ref{section:order} there exists a total order $\preceq$ on $B$, which is weaker than $\leq$, meaning that for every $b$, $b'$ in $B$, we have:
$$b\leq b'\Rightarrow b\preceq b'.$$ 
In particular every cut of $\preceq$ is a cut of $\leq$. Denote ${\mathcal C} $ the set of cuts $\Gamma=(R_{\Gamma}, A_{\Gamma})$ of $\preceq$ and recall that $\preceq$ has a natural extension on ${\mathcal C}\sqcup B$.  
By Lemma \ref{lemma:alternative}, we have a partition ${ \mathcal C}= {\mathcal C}_- \cup{\mathcal C}_+$, where:
\begin{itemize}
\item $\Gamma$ belongs to ${\mathcal C}_-$ if there is a  component  of $A_{\Gamma}$ containing  every $\omega_i$;
\item $\Gamma$ belongs to ${\mathcal C}_+$ if there is a  component  of $R_{\Gamma}$ containing every $\alpha_i$. 
\end{itemize} 
It is clear that $({\mathcal C}_- ,{\mathcal C}_+)$\ is a cut of ${ \mathcal C}$. Note also that ${\mathcal C}_-$ and ${\mathcal C}_+$ are not  empty because $(\emptyset,B)\in {\mathcal C}_- $ and $(B,\emptyset)\in {\mathcal C}_+$.

\begin{lemma} \label{lemma:extrema} The cut $$ {\Gamma}_-^{\max} =\left(\bigcup_{\Gamma\in  {\mathcal C}_-} R_{\Gamma}, \bigcap_{\Gamma\in  {\mathcal C}_-} A_{\Gamma}\right)$$ is the greatest element of ${\mathcal C}_-$ and the cut $$ {\Gamma}_+^{\min} =\left(\bigcap_{\Gamma\in  {\mathcal C}_+} R_{\Gamma}, \bigcup_{\Gamma\in  {\mathcal C}_+} A_{\Gamma}\right)$$ the smallest element of ${\mathcal C}_+$. 
\end{lemma}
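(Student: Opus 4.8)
The plan is to verify directly that the two displayed couples are cuts of $\preceq$, that they lie in the correct piece of the partition $({\mathcal C}_-,{\mathcal C}_+)$, and that they dominate (resp. are dominated by) every element of ${\mathcal C}_-$ (resp. ${\mathcal C}_+$). By symmetry it suffices to treat ${\Gamma}_-^{\max}$. First I would check that ${\Gamma}_-^{\max}$ is a cut: writing $R=\bigcup_{\Gamma\in{\mathcal C}_-}R_\Gamma$ and $A=\bigcap_{\Gamma\in{\mathcal C}_-}A_\Gamma$, the pair $(R,A)$ is clearly a partition of $B$ since each $(R_\Gamma,A_\Gamma)$ is, and $R$ is downward closed for $\preceq$ (a union of downward closed sets) while $A$ is upward closed (an intersection of upward closed sets); so $(R,A)\in{\mathcal C}$. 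Equivalently, $R$ is a repeller and $A$ an attractor for the order $\leq$, since $\preceq$-cuts are $\leq$-cuts by the remark in Section \ref{section:order}, and arbitrary unions of repellers are repellers while arbitrary intersections of attractors are attractors (as recalled in the subsection on attractors and repellers).

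Next I would show ${\Gamma}_-^{\max}\in{\mathcal C}_-$. This is the step I expect to be the main obstacle, because it requires producing a single component of $A=\bigcap_\Gamma A_\Gamma$ that contains all the $\omega_i$ simultaneously, out of the family of components $A'_\Gamma\subset A_\Gamma$ furnished by Lemma \ref{lemma:alternative}(2) for each $\Gamma\in{\mathcal C}_-$. The idea is that for each $\Gamma$, Lemma \ref{lemma:alternative} gives a component $A'_\Gamma$ of $A_\Gamma$ containing every $\omega_i$; since ${\mathcal C}_-$ is totally ordered by $\preceq$ (it is a down-set in the totally ordered extension ${\mathcal C}$), the sets $A_\Gamma$ are nested, hence so are the $A'_\Gamma$ — more precisely, if $\Gamma\preceq\Gamma'$ then $A_{\Gamma'}\subset A_\Gamma$ and the component $A'_{\Gamma'}$ of $A_{\Gamma'}$ containing the $\omega_i$ is contained in the component $A'_\Gamma$ of $A_\Gamma$ containing the $\omega_i$. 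Therefore for any $i$, the brick $b_i^m$ belongs to $A'_\Gamma$ for all sufficiently large $m$ and all $\Gamma$, and one needs the intersection $\bigcap_\Gamma A'_\Gamma$ to still be connected. Here I would use Lemma \ref{lemma:ends}: for $\Gamma={\Gamma}_-^{\max}$ itself, let $A_i$ be the component of $A$ containing the tail of $(b_i^m)_{m>0}$; each $A_i$ is contained in $A'_\Gamma$ for every $\Gamma\in{\mathcal C}_-$, hence $A_i\subset\bigcap_\Gamma A'_\Gamma$. Then Lemma \ref{lemma:transverse ends}, applied to $R_i\cup A_i$ and $R_{i+1}\cup A_{i+1}$ exactly as in the proof of Lemma \ref{lemma:alternative}, shows that either all the $R_i$ coincide or all the $A_i$ coincide; since $\alpha_i$ and $\alpha_{i+1}$ cannot both lie in a common component of $R$ unless \emph{all} $\alpha_j$ do (again by the transversality argument), and the latter would put ${\Gamma}_-^{\max}$ in ${\mathcal C}_+$, contradicting ${\mathcal C}_-\cap{\mathcal C}_+=\emptyset$ once we know ${\Gamma}_-^{\max}\in{\mathcal C}$ and invoke Lemma \ref{lemma:alternative}'s dichotomy — we conclude all the $A_i$ are equal to one component $A'$, which then contains every $\omega_i$. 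Hence ${\Gamma}_-^{\max}\in{\mathcal C}_-$.

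Finally, maximality is immediate from the construction: if $\Gamma\in{\mathcal C}_-$ then $R_\Gamma\subset\bigcup_{\Gamma'\in{\mathcal C}_-}R_{\Gamma'}=R_{{\Gamma}_-^{\max}}$, which by definition of the extended order means $\Gamma\preceq{\Gamma}_-^{\max}$; so ${\Gamma}_-^{\max}$ is the greatest element of ${\mathcal C}_-$. The argument for ${\Gamma}_+^{\min}$ is obtained by exchanging the roles of repellers/attractors, $\alpha_i$/$\omega_i$, unions/intersections, and reversing all inequalities, using Lemma \ref{lemma:alternative}(1) in place of (2). I expect the only genuinely delicate point to be the connectedness in the intersection step, which is why I would route it through the concrete components $A_i$ of Lemma \ref{lemma:ends} rather than trying to argue abstractly that a decreasing intersection of connected brick sets containing fixed bricks is connected.
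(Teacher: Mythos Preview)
Your verification that $\Gamma_-^{\max}$ is a cut and that it dominates every element of $\mathcal C_-$ is fine, and so is the observation that Lemma~\ref{lemma:alternative} applies to it and forces it into exactly one of $\mathcal C_-$ or $\mathcal C_+$. The gap is at the decisive step: you never actually rule out the possibility $\Gamma_-^{\max}\in\mathcal C_+$. Your sentence ``the latter would put $\Gamma_-^{\max}$ in $\mathcal C_+$, contradicting $\mathcal C_-\cap\mathcal C_+=\emptyset$ once we know $\Gamma_-^{\max}\in\mathcal C$'' is circular: disjointness of $\mathcal C_-$ and $\mathcal C_+$ only yields a contradiction if you already know $\Gamma_-^{\max}\in\mathcal C_-$, which is precisely the statement to be proved. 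The earlier paragraph about the nested sets $A'_\Gamma$ does not help either, because you use the components $A_i$ of $A=\bigcap_\Gamma A_\Gamma$ without first showing that each $Z_i^+$ is nonempty for $\Gamma_-^{\max}$; nothing you have said prevents some tail $(b_i^m)_{m>0}$ from lying entirely in $R$.

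What is missing is the finiteness argument that the paper supplies (and flags in its footnote as the place where the total order is essential). One argues by contradiction: if $\Gamma_-^{\max}\in\mathcal C_+$, a single component $R'$ of $R_{\Gamma_-^{\max}}$ contains bricks $b_i^{m_i}$ for every $i$, hence a \emph{finite} connected set $X\subset R_{\Gamma_-^{\max}}$ containing all of them. Since $R_{\Gamma_-^{\max}}=\bigcup_{\Gamma\in\mathcal C_-}R_\Gamma$ and $\mathcal C_-$ is totally ordered, the finitely many bricks of $X$ lie in a common $R_\Gamma$ with $\Gamma\in\mathcal C_-$. The component of $R_\Gamma$ containing $X$ then contains every $\alpha_i$, which contradicts $\Gamma\in\mathcal C_-$ via Lemma~\ref{lemma:alternative}. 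This compactness step is the substance of the lemma; the rest is indeed as routine as you describe.
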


\begin{proof}

It is sufficient to prove that $ {\Gamma}_-^{\max} $ belongs to  ${\mathcal C}_-$,  the same proof will give us that $ {\Gamma}_+^{\min} $ belongs to  ${\mathcal C}_+$. We argue by contradiction and suppose that $ {\Gamma}_-^{\max} $ belongs to ${\mathcal C}_+$. There is a component $R'$ of  $R_{{\Gamma}_-^{\max}}$ that contains every $\alpha_i$ and so for every $i\in\Z/p\Z$, there exists $m_i<0$ such that $b_i^{m_i}\in R'$. This implies that there exists a finite connected subset $X\subset R_{{\Gamma}_-^{\max}}$ that contains every $b_i^{m_i}$, $i\in\Z/p\Z$. Using the equality $R_{{\Gamma}_-^{\max}}=\bigcup_{\Gamma\in  {\mathcal C}_-} R_{\Gamma}$ and the fact that ${ \mathcal C}$ is totally ordered, we deduce that there exists $\Gamma\in   {\mathcal C}_-$ such that $X\subset R_{\Gamma}$\footnote{It is here that we need to work with a total order} . This implies that the connected component of $R_{\Gamma}$ that contains $X$ is regular and contains all the $\alpha_i$, $i\in\Z/p\Z$, in contradiction with the definition of  ${ \mathcal C}_-$.\end{proof}

The set $R_{{\Gamma}_+^{\min}}$ is larger than $R_{{\Gamma}_-^{\max}}$, but $R_{{\Gamma}_+^{\min}}\setminus R_{{\Gamma}_-^{\max}}$ is reduced to a brick $b$, because there is no cut between ${\Gamma}_-^{\max}$ and  ${\Gamma}_+^{\min}$. So, we have $$ R_{{\Gamma}_+^{\min}} =R_{{\Gamma}_-^{\max}}\cup\{ b\}, \enskip   A_{{\Gamma}_-^{\max}} =A_{{\Gamma}_+^{\min}}\cup\{ b\}.$$ In particular, it holds that 
 $${\mathcal C}_-=\{\Gamma\in {\mathcal C}\,\vert\, \Gamma\prec b\},\enskip {\mathcal C}_+=\{\Gamma\in {\mathcal C}\,\vert\, b\prec \Gamma\}.$$ In other terms, a cut $(R, A)\in{ \mathcal C}$ satisfies the condition (1) of Lemma  \ref{lemma:alternative} if  $b\in R$ and  the condition (2) if $b\in A$.
 
Let us explain now where is the contradiction.
There exists a component $A'$ of $A_{{\Gamma}_-^{\max}}$ that contains every $\omega_i$, $i\in\Z/p\Z$, which means that for every $i\in\Z/p\Z$, there exists $m_i>0$ such that $b_i^{m_i}\in A'$. The set $A'$ being a connected attractor, it is the same for $\varphi_+(A')$. Moreover $\varphi_+(A')$ contains $b_i^{m_i+1}$ because $b_i^{m_i+1} \in \bigcup_{n>0} \varphi_+^n(\{b_i^{k_i}\})$ and $ \bigcup_{n>0} \varphi_+^n(\{b_i^{m_i}\})\subset \varphi_+(A')$. So $\varphi_+(A')$ is a connected attractor that contains every $\omega_i$, $i\in\Z$. The brick $b$ belongs to the attractor  $A_{{\Gamma}_-^{\max}}$ but is free. So, it holds that 
$$\varphi_+(\{b\} ) \subset  \left(A_{{\Gamma}_-^{\max}}\setminus\{b\}\right) =A_{{\Gamma}_+^{\min}}$$and so 
$$\varphi_+(A')\subset \varphi(A_{{\Gamma}_-^{\max}})= \varphi_+(A_{{\Gamma}_+^{\min}} )\cup\varphi_+(\{b\} ) \subset A_{{\Gamma}_+^{\min}}.$$ This contradicts the fact that no component of $A_{{\Gamma}_+^{\min}}$ contains all the $\omega_i$.
\end{proof}

\section{Further comments about the proof }

\subsection{Transverse foliations} As recalled in Section \ref{section:recurrent homeomorphisms}, the Brouwer Translation Theorem asserts that if $f$ is a Brouwer homeomorphism of $\D$, then $\D$ can be covered with Brouwer lines $\lambda$, meaning oriented such that $f(\overline{L(\lambda)})\subset L(\lambda)$
or equivalently such that
$f^{-1}(\overline {R(\lambda)})\subset R(\lambda)$. Its foliated version (see \cite{LeC1}) says the following: there exists a non singular $C^0$ foliation $\mathcal F$ on $\D$ such that every leaf is a Brouwer line. 

Let $\gamma:I\to \D$ be a path defined on a non trivial real interval. Say that $\gamma$ is {\it positively transverse} to $\mathcal F$ if it locally crosses leaves from the right to the left. Such a path must be injective because $\mathcal F$ is non singular. In fact, as explained in  \cite{HaeR}, the space of leaves furnished with the quotient topology is an oriented one dimensional
manifold, which is Hausdorff if and only if the foliation is trivial, meaning conjugate to the  foliation of $\C$ by verticals. A positively transverse path projects onto a simple path in the manifold $\mathcal F$, compatible with the natural orientation. Note that there is a natural order $\leq$ on $\mathcal F$ defined as follows: $\lambda<\lambda'$ if there exists a transverse path $\gamma:[0,1]\to \D$ such that $\gamma(0)\in\lambda$ and $\gamma(1)\in\lambda'$.  Let us state now an easy but important result proved in  \cite{LeC1}: for every $z\in \D$, there exists a positively transverse path that joins $z$ to $f(z)$. Noting $\phi_z$ the leaf containing a point $z\in\D$, this result tells that the function $z\mapsto \phi_z$ is a {\it Lyapunov function} of $f$ with values in $\mathcal F$: one has $\phi_{f(z)}<\phi_z$ for every $z\in \D$.  

Let us explain now how to use the foliated version of Brouwer Translation Theorem to prove that a homeomorphism satisfying the hypothesis of Theorem \ref{th:principal} has at least one fixed point. Here again, it will be a proof by contradiction. The simplest way to construct a transverse foliation is to start with a maximal free brick decomposition $\mathcal D = (B, E,V)$, to extend the order $\leq$ naturally induced on $B$ by a total order $\preceq$ and to consider the set ${\mathcal L}$ of lines $\lambda\subset \Sigma(\mathcal D)$, where $\lambda\in {\mathcal L}$ if and only if there exists a cut $c=(R_c, A_c)$ of $\preceq$ such that $\lambda$ is a connected component of $\partial R_c=\partial A_c$ (see \cite{LeC3}). The orientation of $\Sigma(D)$ defines a natural orientation on each $\lambda\in {\mathcal L}$, and then $\lambda\in {\mathcal L}$ is a Brouwer line. There is a natural topology and a natural order on ${\mathcal L}$ (partial but locally total ) that makes  ${\mathcal L}$ similar to a lamination (every $\lambda$ has a neighborhood that is homeomorphic to a totally disconnected compact subset of a real interval). By a process of desingularization, one can ``blow up'' the space  ${\mathcal L}$ and transform it into a real lamination of $\D$ by Brouwer lines, homeomorphic to $\mathcal L$. To construct $\mathcal F$, it remains to fill the complement of this lamination. 

The hypothesis of Theorem \ref{th:principal} tells us that, for every $i\in\Z/p\Z$, there exists a line $\Gamma'_i$ that is transverse to $\mathcal F$ and that passes through every point $f^k(z_i)$, $k\in\Z$. Of course this line accumulates onto $\alpha_i$ and $\omega_i$ but can have other accumulation points on $\S$.  If we start with the adapted brick decomposition defined in Section {\ref{section:adapted}, we get an extra condition: there exists a line $\Gamma_i$ that is transverse to $\mathcal F$,whose projection in the space of leaves is the same as the projection of $\Gamma'_i$ and that accumulates only onto $\alpha_i$ and $\omega_i$.

It is easy now to find a contradiction. The important fact being that $\Gamma_i\cap\Gamma_{i+1}\not=\emptyset$, for every $i\in \Z$.
A first argument is to consider the fonction $\nu_i$ defined on the complement of $\Gamma_i$, equal to $0$ on $R(\Gamma_i)$ and to $1$ on $L(\Gamma_i)$, then to consider the fonction $\nu=\sum_{i\in\Z/p\Z}\nu_i$. In a neighborhood of $\S$, when it is defined, the function $\nu$ takes the values $0$ or $1$. The fact that $\Gamma_i\cap\Gamma_{i+1}\not=\emptyset$ implies that $\nu$ takes at least $3$ values on its domain of definition, and so either it holds that $\max \nu>1$ or it holds that $\min \nu<0$. Suppose that we are in the first case and consider a connected component $U$ of the domain of definition of $\nu$.   
It is relatively compact. It is easy to show that $U$ is the interior of a surface with boundary, whose boundary is transverse to $\mathcal F$, the leaves leaving $U$. This is incompatible with the fact that the leaves are lines of $\D$ (one can also say that there is no simple loop transverse to $\mathcal F$). In the case where $\min \nu<0$ we have a similar situation with leaves entering into the domain. 

Let us give another argument. For every $i\in\Z/p\Z$ and every $z\in \Gamma_i$ denote $\Gamma^-_i(z)$ and $\Gamma^+_i(z)$ the half lines contained in $\Gamma_i$ arriving at $z$ and starting at $z$ respectively. Let us consider the first point $z\in \Gamma_i\cap\Gamma_{i+1}$ where $\Gamma_i$ meets $\Gamma_{i+1}$. The union of $\Gamma^-_i(z)$ and $\Gamma^-_{i+1}(z)$ is a line. Moreover, the component of its complement that contains the half-lines $\Gamma^+_{i-1}(z')$, $z'$ close to $\omega_i$, is included in $R(\phi_z)$. Consequently
$R(\phi_z)$ contains the whole line $\Gamma_{i-1}$. One deduces that $R(\phi_z)$ contains $\Gamma^+_{i-2}(z')$ if $z'$ is close to $\omega_{i-2}$ and so it contains $\Gamma_{i-2}$. By iteration of this process we prove that $R(\phi_z)$ contains all $\Gamma_{i'}$, $i'\in\Z/p\Z$, which of course is absurd.

The original idea to get a proof of Handel's theorem via Brouwer Theory was to use the argument above. Unfortunately, Lemma \ref{lemma:new} was  missing, making impossible the use of the lines $\Gamma_i$, $i\in\Z/p\Z$, as above (the same argument does not work with the lines $\Gamma'_i$). What is proved and used in \cite{LeC2} but also in \cite{X1} and \cite{X2} is the following:

\begin{lemma} \label{lemma:old}There exists a sequence $(\gamma_i^k)_{k\in\Z}$ of translation arcs such that:

\begin{enumerate}

\item $\gamma_i^k$ joins $f^k(z_i)$ to $f^{k+1}(z_i)$;

\item $f(\gamma_i^k)\cap\gamma_i^{k'}=\emptyset$ if $k'<k$;

\item $\lim_{k\to -\infty} \gamma_i^k =\alpha_i$;

\item $\lim_{k\to +\infty} \gamma_i^k =\omega_i$.
\end{enumerate}

\end {lemma}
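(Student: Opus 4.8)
The plan is to follow the same constructive scheme used for Lemma \ref{lemma:new}, extracting translation arcs one by one from the open disks $(V_i^k)_{k\in\Z}$ furnished by Lemma \ref{lemma:withnofixedpoints}, but now arranging, by an inductive choice, that the auxiliary pieces satisfy the stronger separation property (2) rather than merely the coincidence property. First I would fix $i\in\Z/p\Z$ and, exactly as in the proof of Lemma \ref{lemma:new}, for each $k$ pick a homeomorphism $h\colon\D\to V_i^k$ with $h(0)=z_i^k$, set $D_*=D_{r_*}$ with $r_*=\sup\{r>0\mid f(D_r)\cap D_r=\emptyset\}$, choose a point $y_i^k\in f(\partial D_*)\cap\partial D_*$, and build a segment $\sigma_i^k=\beta_i^k\delta_i^k$ from $f^{-1}(y_i^k)$ through $z_i^k$ to $y_i^k$ with the interiors of $\beta_i^k$ and $\delta_i^k$ inside $\mathrm{int}(D_*)$. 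Since $f$ is not recurrent, $f^{-1}(y_i^k)\neq f(y_i^k)$, so $\sigma_i^k$ is a translation arc, $\sigma_i^k\,f(\sigma_i^k)\,f^2(\sigma_i^k)$ is a segment, and $\gamma_i^k=\delta_i^k\,f(\beta_i^k)$ is a translation arc joining $z_i^k$ to $z_i^{k+1}$ contained in $D_*\cup f(D_*)\subset V_i^k\cup f(V_i^k)$. Properties (1), (3), (4) then follow verbatim from the corresponding argument in Lemma \ref{lemma:new}, using that $f$ extends continuously near the $\alpha_i$ and $\omega_i$ and that $\lim_{k\to-\infty}V_i^k=\alpha_i$, $\lim_{k\to+\infty}V_i^k=\omega_i$.

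The real content is property (2): $f(\gamma_i^k)\cap\gamma_i^{k'}=\emptyset$ whenever $k'<k$. I would obtain this by an inductive refinement of the choices of $\beta_i^k$ and $\delta_i^k$, exploiting the same freedom invoked at the end of the proof of Lemma \ref{lemma:new}. The key local observation is that $f(\gamma_i^k)=f(\delta_i^k)\,f^2(\beta_i^k)$ lies in $f(D_*)\cup f^2(D_*)$, while for $k'$ much smaller than $k$ the disk $V_i^{k'}$ (hence $\gamma_i^{k'}$) is disjoint from these sets, because $\lim_{k'\to-\infty}V_i^{k'}$ and $\lim_{k'\to-\infty}f(V_i^{k'})$ both equal $\alpha_i$ while $V_i^k$ stays away from $\alpha_i$; so only finitely many values $k'<k$ can cause trouble, namely those where the disks still overlap. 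For those finitely many $k'$, one uses that $\gamma_i^{k'}$ is, up to the already-fixed endpoints lying on the common orbit, free to be perturbed inside $\mathrm{int}(V_i^{k'}\cup f(V_i^{k'}))$ away from $f(\gamma_i^k)$: since $f(\gamma_i^k)$ is a single translation arc not passing through $z_i^{k'}$ or $z_i^{k'+1}$ (those are not periodic, and $z_i^{j}\notin\gamma_i^k$ for $j\notin\{k,k+1\}$), one can push $\gamma_i^{k'}$ off it while keeping it a translation arc with the same endpoints. Doing this induction downward in $k'$, for each $k$ in turn, and noting that perturbing $\gamma_i^{k'}$ only interacts with the finitely many $\gamma_i^{k}$, $k>k'$, whose disks meet $V_i^{k'}$, one arranges (2) for all pairs simultaneously. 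Finally, the same argument with $i$ varying, using the parameter $L$ beyond which the $\gamma'$-pieces for distinct indices are disjoint, shows the construction can be done coherently in $i\in\Z/p\Z$.

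The main obstacle I anticipate is making the inductive perturbation genuinely consistent: a perturbation of $\gamma_i^{k'}$ to avoid $f(\gamma_i^{k})$ might reintroduce an intersection with some $f(\gamma_i^{k''})$ for $k'<k''<k$, or disturb property (2) for a pair already treated. The clean way around this is to fix, once and for all, a choice of $\beta_i^k$ for all $k$ first (handling the interactions $f(\beta_i^k)\cap\beta_i^{k'}$, which involve only $D_*$-type disks), and only afterwards choose the $\delta_i^k$; since $f(\gamma_i^k)$ depends on $\beta_i^k$ through $f^2(\beta_i^k)$ and on $\delta_i^k$ through $f(\delta_i^k)$, and $\gamma_i^{k'}$ depends on $\delta_i^{k'}$ and $\beta_i^{k'}$, a careful ordering of the choices decouples the constraints enough that each is a generic-position condition on a path in a surface, imposed finitely many times. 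This is the step that in the cited papers \cite{LeC2}, \cite{X1}, \cite{X2} required the more delicate preliminary lemma, and it is precisely the technical cost that the present article avoids by using Lemma \ref{lemma:new} instead.
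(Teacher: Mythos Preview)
First, a framing point: the paper does \emph{not} prove Lemma~\ref{lemma:old}. It is stated in Section~8 as the preliminary lemma that was proved and used in \cite{LeC2}, \cite{X1}, \cite{X2}, and the paper immediately remarks that ``the proof of Lemma~\ref{lemma:new} is much simpler than the proof of Lemma~\ref{lemma:old}.'' The whole point of the present article is to replace this harder lemma by Lemma~\ref{lemma:new}. So there is no proof in the paper to compare against; any comparison is with the arguments in \cite{LeC2}.

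On the substance of your sketch: the gap is exactly where you suspect it, and your proposed fix does not close it. The condition $f(\gamma_i^k)\cap\gamma_i^{k'}=\emptyset$ is not a ``generic-position condition on a path in a surface''. Generic position for two arcs in a surface yields \emph{transversality}, not disjointness; to force disjointness you must actually isotope one arc off the other, and that isotopy may have to be large. Here the isotopy is severely constrained: the modified $\gamma_i^{k'}$ must remain a translation arc (so $\gamma_i^{k'}\cap f(\gamma_i^{k'})=\{z_i^{k'+1}\}$) and must stay inside $V_i^{k'}\cup f(V_i^{k'})$, since this containment is what gives (3) and (4). There is no reason the required push-off can be achieved within those constraints. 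Your proposed decoupling (fix all $\beta_i^k$ first, then choose the $\delta_i^k$) does not help either: writing out $f(\gamma_i^k)\cap\gamma_i^{k'}=\emptyset$ in terms of $\beta$'s and $\delta$'s, one of the pieces is $f(\delta_i^k)\cap\delta_i^{k'}=\emptyset$, a dynamical constraint linking two $\delta$'s through $f$, so the $\delta$-choices are not independent of one another. In short, you have correctly identified the obstacle and then asserted it away; the actual proof in \cite{LeC2} proceeds quite differently and is substantially longer, which is consistent with the paper's explicit remark that this lemma is the harder one.
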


Surprisingly, the proof of Lemma \ref{lemma:new} is much simpler than the proof of Lemma \ref{lemma:old}. The second condition of  Lemma \ref{lemma:new}, the fact that $\gamma_i^k$ and $\gamma_i^{k+1}$ coincide in a neighborhood of $z_i^k$, is fundamental in the construction of the adapted brick decomposition, but as it can be seen in the proof of Lemma \ref{lemma:new}, is very easy to get.

It is a natural question whether it is possible to construct a similar foliation in case $f$ is not recurrent and get a complete proof of Theorem \ref{th:principal}. The answer is probably yes. Nevertheless  it is not clear that it will pertinent to try to do so. Let us give a first reason. To construct a transverse foliation for a Brouwer homeomorphism, it is necessary to prove first that there is no singular brick. The characterization that is given in Lemma
\ref{lemma:singular brick} lead us to believe that singular bricks do not exist in the more general case of non recurrent map. But the proof will not be too easy and will certainly need to use topological arguments that cannot be expressed in terms of brick decompositions, like the majority of the arguments used here. In fact, singular bricks do not really cause problems (we will see very soon how to avoid them if necessary). A second reason not to use foliations is that the desingularisation and the filling processes necessary to get a foliation is quite complicated, long and not easy to write. Moreover, Brouwer lines should be replaced by more complex manifolds (unions of lines)  like what is done in the thesis of Tran Ngoc Diep \cite{T}, where a Brouwer type foliation is constructed for some orientation reversing plane homeomorphisms.

\subsection{As a conclusion}

We will conclude the article with some comments, that should permit us to understand why working with brick decompositions is not very different from working with transverse foliations. Let us first display a special case, where the proof of Theorem \ref{th:principal} is particularly simple. Let ${\mathcal D}=(B,E,V)$ be the adapted brick decomposition built in Section  \ref{section:adapted}. Suppose that ${\mathcal D}$  satisfies the following condition: 
for every bricks $b$, $b'$ such that $b\leq b'$, there exists a sequence
$(b_j)_{0\leq j\leq n}$ in $B$ such that:
\begin{itemize}
\item $b_0=b$ and $b_n=b'$;
\item$b_j<b_{j+1}$ for every
$j\in\{0,\dots,n-1\}$;
\item $b_j$ and $b_{j+1}$ are adjacent for every $j\in\{0,\dots,n-1\}$.
\end{itemize}
In that case, one easily constructs, for every $i\in\Z/p\Z$, a sequence $(\hat b_i^m)_{m\in\Z}$ and an integer $M_i\geq 0$ such that 
\begin{itemize}
\item $\hat b_i^m=b_i^m$, for every $m<0$;
\item $\hat b_i^{m}=b_i^{m-M_i}$, for every $m>M_i$;
\item$\hat b_i^m<\hat b_i^{m+1}$ for every
$m\in\Z$;
\item $\hat b_i^m$ and $\hat b_i^{m+1}$ are adjacent for every $m\in\Z$.
\end{itemize}
Note that the $\hat b_i^m$, $i\in\Z/p\Z$, $m\in\Z$, are all regular, note also that the set $\{\hat b_i^m\,,\, m\in\Z\}$ is connected and contains $\alpha_i$ and $\omega_i$. So, for every $i\in\Z/p\Z$, there exists $m_i\in\Z$ and $m'_i\in\Z$ such that $\hat b_i^{m_i}= \hat b_{i+1}^{m'_i}$.  Consider the cut $( (\hat b_i^{m_i})_{\leq}, (\hat b_i^{m_i})_{>} )$. Both sets $(\hat b_i^{m_i})_{\leq}$  and $(\hat b_i^{m_i})_{>} $ are connected  because $\hat b_i^{m_i}$ is regular. The first set contains $\alpha_{i}$ and $\alpha_{i+1}$ and the second one contains $\omega_i$ and $\omega_{i+1}$. By Lemma  \ref{lemma:alternative} one deduces that $(\hat b_i^{m_i})_{\leq}$   contains every $\omega_{i'}$, $i'\in\Z/pZ$, and  $(\hat b_i^{m_i})_{>} $ contains every $\alpha_{i'}$, $i'\in\Z/pZ$. This contradicts Lemma \ref{lemma:alternative}.

The last proposition, tells us that this ``connectedness'' property remains valid if one adds the cuts to the bricks. What we get is analogous to the fact that, for a Brouwer homeomorphism furnished with a transverse foliation, every point can be joined to its image by a transverse path. 

\begin{proposition} \label{prop:connectedness} We suppose that $f$ is a non recurrent orientation preserving homeomorphism of the $2$-sphere $\S^2$ and that ${\mathcal
D}=(V,E,B)$ is a maximal free brick decomposition of $f_{\vert \S^2\setminus\mathrm{fix}(f)}$. We denote $\leq$ the induced order on $B$ and consider a total order $\preceq$ weaker than $\leq$.  Then, for every regular bricks $b$, $b'$ such that $b\leq b'$, there exists a sequence
$(b_j)_{0\leq j\leq n}$ of regular bricks, satisfying $b_0=b$ and $b_n=b'$, and such that for every $j\in\{0,\dots, n-1\}$, it holds that:
\begin{itemize}
\item $b_j<b_{j+1}$;
\item $b_{j+1}$ is adjacent to the component of $(b_j)_{\preceq}$ that contains $b_{j}$; 
\item $b_j$ is adjacent to the component of $(b_j)_{\succ}$ that contains $b_{j+1}$.
\end{itemize}\end{proposition}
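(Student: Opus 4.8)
The plan is to follow the same strategy that worked in the special case discussed just before the statement, but now carried out ``relative to a cut'' rather than using genuine adjacency along a chain. First I would fix regular bricks $b\leq b'$ and set $R=b_{\preceq}\setminus\{b\}$, so that $(R,\{b\}\cup\dots)$-type cuts of $\preceq$ interpolate between the cut just below $b$ and the cut just above $b'$. The key object is the set $\mathcal{C}$ of cuts $c=(R_c,A_c)$ of $\preceq$ with $b\in A_c$ and $b'\in R_c$ (equivalently $b\preceq c\preceq b'$, with the convention on strictness sorted out at the endpoints), which is totally ordered by the natural extension of $\preceq$ to $B\sqcup\mathcal{C}$. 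For each such cut $c$, every cut of $\preceq$ is a cut of $\leq$, so Corollary \ref{corollary:singular} and Proposition \ref{proposition:components} apply to the components of $R_c$ and $A_c$; in particular the component $R_c'$ of $R_c$ containing $b$ and the component $A_c'$ of $A_c$ containing $b'$ are well defined, and since $b$, $b'$ are regular these components are regular (they contain an adjacent pair, or one invokes the attractor/repeller classification of components in a maximal free brick decomposition).

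The heart of the argument is the following claim: there is a cut $c\in\mathcal{C}$ for which $R_c'$ and $A_c'$ are \emph{adjacent}. Granting this, one picks an edge $e$ in their common boundary, lets $b_1$ be the brick of $A_c'$ containing $e$, and observes that $b\leq b_1$ (since $b_1\in A_c$ and $b\in R_c$), that $b_1<b$ fails so $b<b_1$ unless $b_1=b$ which is excluded, that $b_1$ is adjacent to $R_c'$ hence to the component of $b_{\preceq}$ containing $b$, and — using Corollary \ref{corollary:singular}, because a singular component would be adjacent to only one component on the other side — that $b$ is adjacent to the component of $b_{\succ}$ that contains $b_1$. This $b_1$ is regular because it is adjacent to its neighbor on the other side of $e$ (or, again, by the component classification), and now $b_1\leq b'$ with $b_1,b'$ regular, so one recurses. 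Termination is ensured because at each step the cut strictly increases, or more concretely because $b_1\succ b$ and there are only finitely many bricks between $b$ and $b'$ in $\preceq$ that can appear — here one should note that $(b_j)_\preceq$ genuinely grows, so the process stops at $b'$ after finitely many steps.

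To prove the adjacency claim I would argue by contradiction, exactly mimicking the final contradiction in the proof of Theorem \ref{th:principal}. Consider the cut $c^{\max}=\bigl(\bigcup_{c}R_c,\bigcap_c A_c\bigr)$ taken over all $c\in\mathcal{C}$ such that $R_c'$ (the component of $R_c$ meeting $b$) is \emph{not} adjacent to $A_c'$; using that $\mathcal{C}$ is totally ordered and that a finite connected witness set sits inside some single $R_c$ (this is the point where totality of $\preceq$ is essential, as flagged in the footnote in the proof of Lemma \ref{lemma:extrema}), one shows $c^{\max}$ again lies in this ``non-adjacent'' family, while the next cut $c^{\max}+$ differs by a single brick. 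Then the regular attractor $A'$ containing $b'$ satisfies $\varphi_+(A')\subset A'$, and feeding the single extra free brick through $\varphi_+$ as in the proof of Theorem \ref{th:principal} forces $\varphi_+(A')$ to stay inside the smaller attractor, contradicting that $\varphi_+(A')$ must still contain $b'$ and meet $R_{c^{\max}+}'$ along the orbit relation $f^n(b_{R})\cap A'\neq\emptyset$ coming from $b\leq b'$; this is precisely where Proposition \ref{proposition:components} bites, converting that orbit relation into genuine adjacency. The main obstacle I expect is bookkeeping at the endpoints — making sure the cut ``just below $b$'' and ``just above $b'$'' really lie in $\mathcal{C}$ and that the strict-inequality clauses $b_j<b_{j+1}$ hold rather than mere $\leq$ — together with checking that the recursion's witness sets stay finite and that ``regular'' is preserved at each $b_j$; the topological input (Lemma \ref{lemma:singular brick}, Corollary \ref{corollary:singular}, Proposition \ref{proposition:components}) is already in hand and does the real work.
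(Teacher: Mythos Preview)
Your approach differs substantially from the paper's and has genuine gaps.

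The paper's argument is direct and constructive. Since $b'$ is regular, $(b')_{\leq}$ is connected, so one fixes a finite adjacency chain $\hat b_0=b,\dots,\hat b_m=b'$ inside $(b')_{\leq}$; any positively singular $\hat b_k$ is bypassed via its neighbours using Lemma~\ref{lemma:singular brick} (its boundary is a single line, so the adjacent bricks on the other side form a chain, all $\leq\hat b_k\leq b'$). One then extracts the subsequence of successive $\preceq$-records $b_j=\hat b_{k_j}$ and verifies the three conditions by hand: the bricks $\hat b_0,\dots,\hat b_{k_{j+1}-1}$ form a connected subset of $(b_j)_\preceq$ containing $b_j$, so $b_{j+1}$ is adjacent to that component; and $(b_j)_>$ is connected, contains $b'$, and lies in $(b_j)_\succ$, so $b_j$ is adjacent to the component of $(b_j)_\succ$ containing $b'$ (which is then shown to contain $b_{j+1}$). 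Finiteness is automatic because $(b_j)$ is a subsequence of a fixed finite chain.

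Your recursive scheme has three real problems. First, you conflate $\leq$ with $\preceq$: from $b_1\in A_c$ and $b\in R_c$ you obtain only $b\prec b_1$; since $\preceq$ is \emph{weaker} than $\leq$, two $\preceq$-comparable bricks need not be $\leq$-comparable, and ``$b_1<b$ fails, hence $b<b_1$'' is invalid. Second, the recursion requires $b_1\le b'$ and $b_1$ regular, and neither follows from choosing $b_1$ on $\partial R'_c\cap\partial A'_c$. Third, termination fails: the $\preceq$-interval between $b$ and $b'$ is in general infinite (as is $\{b'':b\le b''\le b'\}$), so a $\prec$-increasing sequence has no reason to reach $b'$ in finitely many steps. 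The paper's device of working inside a fixed finite adjacency chain in $(b')_\leq$ resolves all three issues at once.

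Finally, your $\mathcal C_-/\mathcal C_+$ argument for the adjacency of $R'_c$ and $A'_c$ is both unnecessary and ill-posed: the ``non-adjacent'' cuts have no reason to form an initial segment of $\mathcal C$, and there is no dichotomy analogous to Lemma~\ref{lemma:alternative} to drive the contradiction. Adjacency is in fact immediate: $(b)_>$ is connected, contains $b'$, and lies in $(b)_\succ$, hence $(b)_>\subset A'_c$, and $b$, being positively regular, is adjacent to $(b)_>$. This is exactly how the paper handles that step; the substantive difficulty lies elsewhere.
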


\begin{proof} The brick $b'$ being regular, the set $b'_{\leq}$ is connected, and by hypothesis, it contains $b$. So, there exists a sequence
$(\hat b_k)_{0\leq k\leq m}$ in $b'_{\leq}$, such that:

\begin{itemize}
\item $\hat b_0=b$ and $\hat b_m=b'$; 
\item  $\hat b_k$ and $\hat b_{k+1}$ are adjacent for every every $k\in\{0,\dots, m-1\}$. 
\end{itemize}
Let us explain first why the bricks can be supposed to be positively regular. Indeed suppose that there exists $k\in\{0,\dots, m\}$ such that $\hat b_k$ is positively singular. By hypothesis, $k\not\in\{0,m\}$ and the bricks $\hat b_{k-1}$ and $\hat b_{k+1}$ are adjacent to $\hat b_k$. We have proved in Lemma \ref{lemma:singular brick}  than $\partial \hat b_k$ is reduced to a line of $\D\setminus \mathrm{fix}(f)$. This implies that there exists a sequence $(\check b_l)_{0\leq l\leq q}$ in $B$ such that:
\begin{itemize}
\item $\check b_0=\hat b_{k-1}$ and $\check b_q=\hat b_{k+1}$;
\item $\check b_l$ is adjacent to $\hat b_k$ for every every $l\in\{0,\dots,q\}$;
\item $\check b_l$ and $\check b_{l+1}$ are adjacent for every $l\in\{0,\dots, q-1\}$. 
\end{itemize}
Note that the bricks $\check b_l$, $l\in\{0,\dots, q\}$, are all positively regular and included in $b'_{\leq}$ because we have $\check b_l\leq \hat b_k  \leq b'$. So, this process permit to avoid positively singular bricks by extending our original sequence of bricks. Then we can shorten our sequence in such a way that a brick appears at most once. 

Let us consider now the increasing sequence $(k_j)_{0\leq j\leq n}$ uniquely defined by the following properties:
 \begin{itemize}
 \item $k_0=0$ and $k_n=m$;
\item  $\hat b_{k_j}\succ \hat b_{k_{j-1}}$ if $0<j\leq n$;
\item  $\hat b_{k}\preceq \hat b_{k_{j-1}}$ if $0<j\leq n$ and $0\leq k<k_j$.
\end{itemize}
Setting $b_j=\hat b_{k_j}$, we will prove that the sequence $(b_j)_{0\leq j\leq n}$ satisfies the conclusions of Proposition \ref{prop:connectedness}.

We denote ${\mathcal C}$ the set of cuts of $\preceq$ and define  $c_i=( (b_i)_{\preceq}, (b_i)_{\succ})\in {\mathcal C}$, noting that 
$b_0\prec c_0\prec b_1\prec \dots\prec c_{n-1}\prec b_n$.

Note first that the bricks $b_j$, $0\leq j\leq n$, are all regular. It is true if $j\in\{0, n\}$ by hypothesis. Suppose now that $0<j<n$. The bricks $\hat b_{k_j-1}$ and $\hat b_{k_j}$ are comparable for $\leq$ because they are adjacent and we have $\hat b_{k_j-1}\preceq \hat b_{k_{j-1}} \prec \hat b_{k_j}$. So it holds that $\hat b_{k_j-1}<\hat b_{k_j}$. Consequently, $\hat b_{k_j}$ is regular, being positively regular by hypothesis. 

Fix $j\in\{0,\dots, n-1\}$. The brick $b_j$ being regular, the set $(b_j)_{>}$ is connected, and by hypothesis, it contains $b'$. So it is contained in the component $A_j$ of $(b_j)_{\succ}$ that contains $b'$. Note that $b_j$ is adjacent to $A_j$ because, being positively regular,  it is adjacent to $(b_j)_{>}$. Note $R_j$ the component of $(b_j)_{\preceq}$ that contains $b_j$.  It contains all $b_{k}$, $0\leq k<k_{j+1}$, because these bricks belong to $(b_j)_{\preceq}$ and their union is connected. Consequently $b_{j+1}=\hat b_{k_{j+1}}$ is adjacent to $R_j$. The fact that $b_{j+1}$ is positively regular implies that the component of $(b_j)_{\succ}$ that contains $b_{j+1}$ is regular and so is an attractor. One deduces that $(b_{j+1})_{\geq}$ is included in this component. By hypothesis $b'\geq b_{j+1}$ and so,  $b_{j+1}$ belongs to $A_j$.
\end{proof}



\end{document}